\theoremstyle{plain}
\newtheorem{theorem}{Theorem}[section]
\newtheorem{proposition}{Proposition}[section]
\newtheorem{lemma}{Lemma}[section]
\newtheorem{remark}{Remark}
\newtheorem{example}{Example}[section]
\numberwithin{equation}{section}
\def\Vec#1{\mbox{\boldmath $#1$}}
\dedicatory{Dedicated to professor Tatsuru Takakura on the occasion of his 60th birthday}
\title[Homogeneity of magnetic trajectory]
{Homogeneity of magnetic trajectories in the Berger sphere}
\author[J.~Inoguchi]{Jun-ichi Inoguchi}
\address[J. I.]
{Department of Mathematics,
Hokkaido University
Sapporo
060-0810 Japan}
\email{inoguchi@math.sci.hokudai.ac.jp}
\author[M.~I.~Munteanu]{Marian Ioan Munteanu}
\address[M.~I.~M.: corresponding author]
{University 'Al. I. Cuza' of Iasi, 
Faculty of Mathematics, Bd. Carol I, no.~11,
700506 Iasi, Romania}
\email{marian.ioan.munteanu@gmail.com}
\keywords{Homogeneous magnetic trajectory, Berger sphere, Euler-Arnold equation}
\date{\today}
\begin{document}

\begin{abstract}
We study the homogeneity of contact magnetic trajectories in naturally reductive
Berger spheres. We prove that every contact magnetic trajectory is a product of 
a homogeneous geodesic and a charged Reeb flow.

\keywords{homogeneous magnetic trajectories; Berger sphere; 
Euler-Arnold equation; periodicity}

\subjclass[2020]{53C15, 53C25, 53C30, 37J46, 53C80}
\end{abstract}

\maketitle
\section{Introduction}
Normal homogeneous spaces of positive curvature are classified by 
Berger \cite{Berger} and Wilking \cite{Wil}. 
In the list given by Berger \cite{Berger}, one can see a 
normal homogeneous space of the form 
$(\mathrm{SU}(2)\times\mathbb{R})/H_{r}$ 
diffeomorphic to the 
unit $3$-sphere $\mathbb{S}^3$ (see Appendix \ref{sec:A} for the definition of the isotropy group $H_r$). 
The normal homogeneous 
space $(\mathrm{SU}(2)\times\mathbb{R})/H_{r}$ is nowaday called 
the \emph{Berger sphere}. Berger sphere appears in many 
branches of differential geometry. 
For instance Berger sphere is a simple example 
which exhibits collapsing of Riemannian manifolds. 
Under a certain limit in Gromov-Hausdorff sense, 
Berger sphere collapses to the $2$-sphere. 

The Berger sphere admits a 
homogeneous contact structure compatible with the metric. 
The Berger sphere equipped with a homogeneous 
contact structure is (homothetic to) a Sasakian manifold of constant 
holomorphic sectional curvature greater than $1$. 
In this article we generalize the notion of Berger sphere as a 
complete and simply connected Sasakian $3$-manifold 
$\mathscr{M}^3(c)$ of constant holomorphic sectional curvature 
$c>-3$ ($c\not=1$). The homogeneous Riemannian space $\mathscr{M}^3(c)$ with $-3<c<1$ is no longer 
normal homogeneous but still naturally reductive.

In this decade, surface geometry of Berger sphere becomes an active 
area of submanifold geometry. Montaldo and Onnis studied 
helix surfaces in the Berger sphere \cite{MO}. Here a \emph{helix surface} means a surface whose 
normal direction makes constant angle 
with the Reeb vector field of the contact form 
of $\mathscr{M}^3(c)$. The Berger sphere 
is represented by $\mathscr{M}^3(c)=\mathrm{U}(2)/\mathrm{U}(1)$ 
as a naturally reductive homogeneous space. 
A surface $\varSigma$ is $\mathscr{M}^3(c)$ is said to be
a \emph{rotationally invariant surface} if it is invariant under 
the $\mathrm{U}(1)$-action. 
Rotationally invariant surfaces of constant mean curvature 
are studied in Torralbo \cite{Torralbo10}. 
Torralbo and Van der Veken \cite{ToVe} studied 
rotationally invariant surfaces of constant Gauss curvature. 
Torralbo investigated compact minimal surfaces \cite{Torralbo12}. 

In this paper, we address Hamiltonian systems on the Berger sphere $\mathscr{M}^3(c)$. 
We start our investigation with geodesics in $\mathscr{M}^3(c)$. 
After the publication of the seminal paper \cite{Berger} by Berger, 
global and local differential geometric properties of the Berger sphere have been well studied
in detail. For instance, Jacobi fields along geodesics, cut loci are investigated in 
\cite{Ra,Sakai81,Z}.
In the present work we re-examine geodesic from 
Hamiltonian dynamics viewpoint. 
We interpret the equation of geodesic 
on a Riemannian manifold $(M,g)$ as a 
Hamiltonian system on 
the cotangent bundle (\emph{phase space}) 
$T^{*}M$ with respect to the 
canonical symplectic structure of $T^{*}M$ and 
focus on its integrability. The Hamiltonian 
derived from the geodesic equation is the \emph{kinetic energy}. The solutions
of the Hamiltonian system derived from the 
geodesic equation are called \emph{geodesic flows} \cite{BT,HMR}. 
One of the fundamental problems of Hamiltonian systems 
is to investigate the \emph{preservation of integrability} 
or preservation of the existence of periodic orbits 
under perturbations of the symplectic structure. 

A \emph{magnetic field} on a Riemannian manifold $(M,g)$ 
is formulated mathematically as a closed $2$-form on $M$. 
By virtue 
of a  magnetic field $F$ on a base manifold $M$, 
one obtains a perturbed symplectic form on the cotangent bundle $T^{*}M$. 
The solutions to the perturbed Hamiltonian system of geodesic flows are
called \emph{magnetic geodesic flows} \cite{AnSi,Arnold61,Arnold86}. 
The curves on the configuration space $M$ 
obtained as projection images of magnetic geodesic flows 
are called \emph{magnetic trajectories} 
(see Appendix \ref{sec:C}). 
As nice perturbations 
of geodesic flows, magnetic trajectories have been paid much attention of 
differential geometers as well as researchers of 
dynamical systems. See \textit{e.g.}, 
\cite{BJ,IM221,MSY} and references therein.

Let us consider geodesic flows of homogeneous 
Riemannian spaces. Because of the homogeneity, 
we 
may concentrate our attention to geodesics starting at the origin. 
It is known that every geodesic of a naturally reductive homogeneous 
Riemannian space is homogeneous. 
More precisely every geodesic starting at 
the origin of a naturally reductive homogeneous space 
is an orbit of a one-parameter subgroup of the (largest) 
connected isometry group (see \textit{e.g.} \cite{Av,DZ,KV}). 
This fact implies that the Hamiltonian system of 
geodesic flows of a 
naturally reductive homogeneous Riemannian space 
is completely integrable (in non-commutative sense). 

In \cite{BJ}, 
Bolsinov and Jovanovi{\'c} 
studied magnetic trajectories in \emph{normal} homogeneous Riemannian spaces. 
They showed that every magnetic trajectory starting at the origin under the influence of the \emph{standard invariant magnetic field} (see \eqref{eq:homF}) is 
homogeneous (see \cite[Remark 1]{BJ}). Thus 
magnetic trajectories in naturally reductive homogeneous Riemannian spaces would be 
next targets. 

Now, let us return our attention to the Berger sphere 
$\mathscr{M}^{3}(c)$. 
As we mentioned above, $\mathscr{M}^3(c)$ is naturally reductive 
for any $c$. Thus every geodesic is homogeneous. 
The Riemannian metric of the Berger sphere is 
obtained from the standard Riemannian 
metric of the unit $3$-sphere by the perturbing 
the fiber components with respect to the 
Hopf fibering. The connection form of the 
Hopf fibering is nothing but the standard contact form 
of $\mathbb{S}^3$. 
The curvature form of the connection form gives a 
homogeneous magnetic field on the Berger sphere. 
We call it the \emph{contact magnetic field} of 
a Berger sphere $\mathscr{M}^{3}(c)$. 
From this construction we can expect that 
magnetic trajectories with respect to the contact magnetic 
field of $\mathscr{M}^{3}(c)$ are strongly affected by 
both Riemannian and contact geometric properties of the Berger sphere. 
Obviously the notion of geodesic is a Riemannian notion, 
\textit{i.e.} only depends on Riemannian structure. 
On the other hand the notion of contact magnetic trajectory 
depends on both Riemannian structure and 
contact structure.

Motivated by these observations, in our previous work 
\cite{IM17}, the present authors studied periodicity of contact magnetic 
trajectories in $\mathscr{M}^3(c)$. In addition, 
the Jacobi fields for contact magnetic trajectories are completely 
determined in \cite{IM222}. 
It should be noted that 
Bimmermann and Maier computed the Hofer-Zender capacity 
of certain lens spaces \cite{BM}. In the work \cite{BM}, 
magnetic trajectores on the lens space $L(p;1)=\mathbb{S}^3/\mathbb{Z}_p$ induced by the contact magnetic fields on the $3$-sphere $\mathbb{S}^3$ play a crucial role. Moreover contact magnetic fields are used to 
construct certain spacetimes in general relativity \cite{I-M,KIKM}.

Having in mind the 
homogeneity results of magnetic trajectories 
in normal homogeneous spaces due to 
Bolsinov and Jovanovi{\'c} \cite{BJ}, we study 
the homogeneity of contact magnetic trajectories 
in naturally reductive Berger spheres. 

The present paper is organized as follows. 
First we give explicit parametrization 
of geodesics (starting at the origin) 
of the Berger sphere 
$\mathscr{M}^3(c)$ in homogeneous geodesic form in 
Theorem~\ref{THM3.1}. To this end, we give naturally 
reductive homogeneous space representation of 
$\mathscr{M}^3(c)$ explicitly in Section~\ref{section1}. 
In Section~\ref{sec4} we prove that every contact magnetic trajectory  
of $\mathscr{M}^3(c)$ is homogeneous. 
In particular we prove that every 
contact magnetic trajectory is a product of 
homogeneous geodesic and a (charged) Reeb flow.

As a result, the integrability and homogeneity 
of geodesic flows of the Berger sphere is preserved under 
the perturbation by the contact magnetic field. 

Before closing Introduction, we mention our previous work \cite{DRIMN15}.
The Berger sphere $\mathscr{M}^3(c)$ together with its 
metric and contact structure is generalized to 
arbitrary odd-dimension in a straightforward manner. 
The resulting $(2n+1)$-dimensional Berger sphere $\mathscr{M}^{2n+1}(c)$ 
is naturally reductive for any $c$ and normal homogeneous 
for $c\geq 1$. In \cite{DRIMN15}, 
we proved a \emph{codimension reduction theorem} 
for contact magnetic trajectories 
in $\mathscr{M}^{2n+1}(c)$. 
More precisely, it is proved that the essential dimension 
for the theory of contact magnetic trajectories in 
$\mathscr{M}^{2n+1}(c)$ is $3$. 
Recently, 
Albers, Benedetti and Maier \cite{ABM} gave an 
alternative and new proof for the codimension 
reduction theorem for contact magnetic trajectories 
in the unit sphere $\mathbb{S}^{2n+1}$.

\section{Preliminaries}
\subsection{Homogeneous geometry}\label{sec:1.1}
Let $M=L/H$ be a homogeneous manifold. 
Every element $a\in L$ acts transitively 
on $M$. 
For a point $p=aH$ of $M=L/H$, 
the image 
of $p$ under $b\in L$ is denoted 
by $b\cdot p$. By definition, $b\cdot p=(ba)H$.

Let us denote by $\mathfrak{l}$ and $\mathfrak{h}$ the Lie algebras 
of $L$ and $H$, respectively. 
Then $M$ is said to be \emph{reductive} if there exists a 
linear subspace $\mathfrak{p}$ (called a \emph{Lie subspace}) of 
$\mathfrak{l}$ complementary 
to $\mathfrak{h}$ and satisfies 
$[\mathfrak{h},\mathfrak{p}]\subset\mathfrak{p}$. It is known that 
every homogeneous Riemannian space is reductive. 

Now let $M=L/H$ be a homogeneous Riemannian space with 
reductive decomposition $\mathfrak{l}=\mathfrak{h}+\mathfrak{p}$ 
and an $L$-invariant Riemannian metric 
$g=\langle\cdot,\cdot\rangle$. Then $M$ is said to be \emph{naturally reductive} with respect to $\mathfrak{p}$ 
if the $\mathsf{U}_\mathfrak{p}$ tensor vanishes, where
$\mathsf{U}_\mathfrak{p}$ is defined by
\begin{equation}
\label{eq:NatRed}
2\langle \mathsf{U}_\mathfrak{p}(X,Y),Z\rangle=
\langle X, [Z,Y]_{\mathfrak{p}}\rangle
+\langle Y, [Z,X]_{\mathfrak{p}}\rangle.
\end{equation}
for any $X$, $Y$, $Z\in\mathfrak{p}$. Here 
we denote the $\mathfrak{p}$ part of a vector 
$X\in\mathfrak{l}$ by $X_{\mathfrak{p}}$. 
A homogeneous Riemannian space $(M,g)$ is 
said to be \emph{naturally reductive} if 
it admits an isometry group $L$ acting 
transitively on $M$ and admitting a reductive decomposition 
$\mathfrak{l}=\mathfrak{h}+\mathfrak{p}$ which is 
naturally reductive. 

On a naturally reductive homogeneous space 
$M=L/H$ with naturally reductive 
decomposition $\mathfrak{l}=\mathfrak{h}+\mathfrak{p}$, 
the tangent space $T_{o}M$ of $M$ at the origin $o=H$ is 
identified with $\mathfrak{p}$.

\begin{proposition}\label{prop:NR-geodesic}
Every geodesic $\gamma(s)$ of a naturally 
reductive homogeneous space $L/H$ with 
naturally reductive decomposition $\mathfrak{l}=
\mathfrak{h}+\mathfrak{p}$ starting at the origin $o$ with 
initial velocity $X\in \mathfrak{p}$ is \emph{homogeneous}, that is, 
it is expressed as
\[
\gamma(s)=\exp_{L}(sX)\cdot o,
\]
where $\exp_{L}:\mathfrak{l}\to L$ is the 
exponential map.
\end{proposition}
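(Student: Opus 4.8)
The plan is to verify directly that $\gamma(s)=\exp_L(sX)\cdot o$ is the geodesic determined by $\gamma(0)=o$ and $\dot\gamma(0)=X$; by uniqueness of geodesics with prescribed initial data it then coincides with the geodesic in the statement. The first step is to observe that $\gamma$ is an integral curve of the fundamental (Killing) vector field $X^{*}$ associated with $X$, defined by $X^{*}_{p}=\frac{d}{dt}\big|_{t=0}\exp_L(tX)\cdot p$. Indeed, the one-parameter group property $\exp_L((s+t)X)=\exp_L(sX)\exp_L(tX)$ yields $\dot\gamma(s)=X^{*}_{\gamma(s)}$, while at $s=0$ one gets $\dot\gamma(0)=X^{*}_{o}$, which is exactly $X$ under the identification $T_{o}M\cong\mathfrak p$. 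Consequently $\nabla_{\dot\gamma}\dot\gamma=(\nabla_{X^{*}}X^{*})\circ\gamma$, so the whole problem reduces to showing $\nabla_{X^{*}}X^{*}=0$ along $\gamma$.

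I would then reduce this to a single computation at the origin. The flow of $X^{*}$ is $\phi_{s}(p)=\exp_L(sX)\cdot p$, a one-parameter group of isometries (because $X^{*}$ is Killing) which carries $o$ to $\gamma(s)$ and preserves $X^{*}$, since a vector field is invariant under its own flow. Because isometries preserve the Levi-Civita connection, the field $\nabla_{X^{*}}X^{*}$ is $\phi_{s}$-invariant, whence $(\nabla_{X^{*}}X^{*})_{\gamma(s)}=(\phi_{s})_{*}(\nabla_{X^{*}}X^{*})_{o}$. It therefore suffices to prove the pointwise identity $(\nabla_{X^{*}}X^{*})_{o}=0$.

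The heart of the argument is the evaluation of $(\nabla_{X^{*}}X^{*})_{o}$ via the Koszul formula applied to the fundamental fields $X^{*},Y^{*},Z^{*}$ with $X,Y,Z\in\mathfrak p$. The subtle point, and the main obstacle, is that the functions $p\mapsto\langle Y^{*}_{p},Z^{*}_{p}\rangle$ are not constant, so the derivative terms in Koszul's formula do not vanish outright. This is resolved using that each fundamental field is Killing: from $\mathcal{L}_{X^{*}}g=0$ one obtains $X^{*}\langle Y^{*},Z^{*}\rangle=\langle[X^{*},Y^{*}],Z^{*}\rangle+\langle Y^{*},[X^{*},Z^{*}]\rangle$ together with its cyclic permutations. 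Substituting these and using the anti-homomorphism property $[X^{*},Y^{*}]=-[X,Y]^{*}$ of the left action collapses the Koszul formula, at the origin, to
\[
2\langle(\nabla_{X^{*}}Y^{*})_{o},Z\rangle=-\langle[X,Y]_{\mathfrak p},Z\rangle-\langle[X,Z]_{\mathfrak p},Y\rangle-\langle[Y,Z]_{\mathfrak p},X\rangle.
\]
Setting $Y=X$ annihilates the first term and merges the remaining two into $-2\langle[X,Z]_{\mathfrak p},X\rangle=2\langle\mathsf{U}_{\mathfrak p}(X,X),Z\rangle$, so that $(\nabla_{X^{*}}X^{*})_{o}=\mathsf{U}_{\mathfrak p}(X,X)$ by comparison with \eqref{eq:NatRed}. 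Since $L/H$ is naturally reductive with respect to $\mathfrak p$, the tensor $\mathsf{U}_{\mathfrak p}$ vanishes identically, hence $(\nabla_{X^{*}}X^{*})_{o}=0$, completing the argument. I expect the only delicate bookkeeping to be the sign convention for $[X^{*},Y^{*}]$ and the correct matching of the resulting symmetric bilinear form with the tensor $\mathsf{U}_{\mathfrak p}$ of \eqref{eq:NatRed}.
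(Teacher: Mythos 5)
Your proof is correct, and there is in fact no internal proof to compare it against: the paper quotes Proposition \ref{prop:NR-geodesic} as a classical fact, pointing to \cite{Av,DZ,KV}, and never proves it. Your argument is the standard demonstration and the bookkeeping you flagged as delicate does check out: with the convention $[X^{*},Y^{*}]=-[X,Y]^{*}$ for the left action, the Killing identities collapse the Koszul formula at $o$ to $2\langle(\nabla_{X^{*}}Y^{*})_{o},Z\rangle=-\langle[X,Y]_{\mathfrak p},Z\rangle-\langle[X,Z]_{\mathfrak p},Y\rangle-\langle[Y,Z]_{\mathfrak p},X\rangle$, whose symmetric part in $(X,Y)$ is exactly $2\langle\mathsf{U}_{\mathfrak p}(X,Y),Z\rangle$ by \eqref{eq:NatRed}, so $(\nabla_{X^{*}}X^{*})_{o}=\mathsf{U}_{\mathfrak p}(X,X)=0$ as you claim. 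The one step you leave implicit is the evaluation $([X,Y]^{*})_{o}=[X,Y]_{\mathfrak p}$ under $T_{o}M\cong\mathfrak{p}$, which holds because fundamental fields of elements of $\mathfrak{h}$ vanish at the origin; add that one sentence. You also correctly use that $Z\mapsto Z^{*}_{o}$ is onto $T_{o}M$, so testing against $Z^{*}$ suffices.

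Where the paper \emph{does} prove homogeneity of geodesics --- for the Berger sphere, in Theorem \ref{THM3.1} --- it takes the opposite route: it integrates the Euler--Arnold equation $\dot{\mu}=[\mu,\varOmega]$ explicitly on $\mathfrak{su}(2)$ with the inertia tensor $\mathcal{I}$, verifies the two-exponential ansatz of Lemma \ref{lem:geodesiclemma} by direct computation, and then repackages the product $\exp_{G}\exp_{K}$ as the single exponential $\exp_{G\times K}(t\widehat{X})$ of \eqref{eq:homgeo}, which is $\exp_{L}(tX)\cdot o$ for $L=G\times K$. Your abstract argument buys generality and economy: it requires no coordinates, no explicit structure constants, no compactness of $L$, and it shows at one stroke why natural reductivity (vanishing of $\mathsf{U}_{\mathfrak p}$) is precisely the relevant hypothesis. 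The paper's computational route buys what the rest of the paper actually needs: the explicit parametrization of geodesics in terms of the splitting $X=X_{\mathfrak k}+X_{\mathfrak m}$, and a method (the magnetized Euler--Arnold equation of Section \ref{sec4}) that extends to contact magnetic trajectories, where the geodesic flow is perturbed and your Killing-field argument no longer applies as stated --- indeed, there the trajectories are products of a homogeneous geodesic and a charged Reeb flow rather than single one-parameter orbits.
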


A homogeneous Riemannian space $M=L/H$ is said to be 
\emph{normal} if $L$ is compact semi-simple and 
the metric is derived from the restriction 
of a bi-invariant Riemannian metric of $L$. 
For instance, let $\mathsf{B}$ be the Killing form 
of the Lie algebra $\mathfrak{l}$. 
Then, for any positive constant $\alpha$,  
$\langle\cdot,\cdot\rangle_{\mathsf B}=-\alpha\,\mathsf{B}$ 
induces a bi-invariant Riemannian metric on $L/H$. 
Then we obtain a naturally reductive 
decomposition $\mathfrak{l}=\mathfrak{h}\oplus\mathfrak{p}$, where 
$\mathfrak{p}=\mathfrak{h}^{\perp}$ is the orthogonal complement of 
$\mathfrak{h}$ in $\mathfrak{l}$ with respect to 
$\langle\cdot,\cdot\rangle_{\mathsf B}$. Thus normal homogeneous spaces are naturally reductive. 
Riemannian symmetric spaces are also naturally reductive.

As a generalization of naturally reductive homogeneous 
space, the notion of Riemannian g.~o.~space 
was introduced by Kowalski and Vanhecke \cite{KV}. 

According to \cite{KV}, a homogeneous Riemannian space 
$M$ is called a 
\emph{space with homogeneous geodesics} or a 
\emph{Riemannian g.o.~space} if every geodesic 
$\gamma(s)$ of $M$ is an orbit of a one-parameter subgroup of 
the \emph{largest} connected group of isometries. 
Proposition \ref{prop:NR-geodesic} implies that naturally reductive homogenous spaces are Riemannian g.o.~spaces. 
As pointed out by Arnold (\cite{Arnold66}, \cite[p.~379]{Arnold}), homogeneous geodesics 
describe relative equilibria of a mechanical system, such as rotations
with constant angular velocity.
 For more information on 
Riemannian g.o.~spaces, we refer to \cite{Av}. 

\subsection{The Euler-Arnold equation}\label{sec:1.2}
Let $G$ be a Lie group equipped with
a left invariant Riemannian metric $\langle\cdot,\cdot\rangle$. 
The bi-invariance obstruction ${\mathsf{U}}$ is a 
symmetric bilinear map 
$\mathsf{U}:\mathfrak{g}\times\mathfrak{g}\to\mathfrak{g}$ 
defined by
\begin{equation}\label{eq:U}
2\langle {\mathsf{U}}(X,Y),Z \rangle=
-\langle X,[Y,Z]\rangle+\langle Y,[Z,X] \rangle,
\quad  X,Y \in \mathfrak{g}
\end{equation}
The Levi-Civita connection $\nabla$ is described as
\[
\nabla_{X}Y=\frac{1}{2}[X,Y]+\mathsf{U}(X,Y),
\quad X,Y\in\mathfrak{g}.
\]
On the Lie algebra $\mathfrak{g}$, we define 
the linear operator $\mathrm{ad}^{*}:\mathfrak{g}
\to\mathfrak{gl}(\mathfrak{g})$ by
\[
\langle \mathrm{ad}(X)Y,Z\rangle
=\langle Y,\mathrm{ad}^{*}(X)Z\rangle,
\quad X,Y,Z\in\mathfrak{g}.
\]
One can see that 
\[
-2\mathsf{U}(X,Y)=\mathrm{ad}^{*}(X)Y+\mathrm{ad}^{*}(Y)X.
\]
Take a curve $\gamma(s)$ starting at the origin (identity $\mathbf{1}$) of $G$. 
Set $\varOmega(s)=\gamma(s)^{-1}\dot{\gamma}(s)$, then 
one can check that 
\[
\nabla_{\dot{\gamma}}\dot{\gamma}
=\gamma\left(
\dot{\varOmega}-\mathrm{ad}^{*}(\varOmega)\varOmega
\right).
\]
This implies the 
following fundamental fact \cite{Arnold}:
\begin{proposition}
The curve $\gamma(s)$ is a geodesic if and only if $\varOmega$ satisfies
\[
\dot{\varOmega}=\mathrm{ad}^{*}(\varOmega)\varOmega.
\]
\end{proposition}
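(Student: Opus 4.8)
The plan is to read the proposition off directly from the identity for $\nabla_{\dot\gamma}\dot\gamma$ displayed just above it, and---since that identity is the substantive ingredient---to indicate how I would verify it. By definition $\gamma$ is a geodesic precisely when $\nabla_{\dot\gamma}\dot\gamma=0$. The displayed identity asserts
\[
\nabla_{\dot\gamma}\dot\gamma=\gamma\!\left(\dot{\varOmega}-\mathrm{ad}^{*}(\varOmega)\varOmega\right),
\]
where $\gamma(\cdot)=\mathrm{d}L_{\gamma}(\cdot)$ denotes the differential of left translation, a linear isomorphism $\mathfrak{g}\to T_{\gamma}G$. Applying its inverse $\mathrm{d}L_{\gamma^{-1}}$ therefore shows that $\nabla_{\dot\gamma}\dot\gamma=0$ holds if and only if $\dot{\varOmega}-\mathrm{ad}^{*}(\varOmega)\varOmega=0$, which is exactly the claimed equation $\dot{\varOmega}=\mathrm{ad}^{*}(\varOmega)\varOmega$. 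This settles the proposition once the displayed identity is in hand.

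To make the argument self-contained I would derive that identity as follows. Fix a basis $E_{1},\dots,E_{n}$ of left-invariant vector fields and write $\varOmega(s)=\sum_{i}\varOmega^{i}(s)\,E_{i}$ in $\mathfrak{g}$. Because the $E_{i}$ are left-invariant and $\dot\gamma=\gamma(\varOmega)$, the velocity along the curve is $\dot\gamma(s)=\sum_{i}\varOmega^{i}(s)\,E_{i}|_{\gamma(s)}$. Expanding the covariant derivative by the Leibniz rule,
\[
\nabla_{\dot\gamma}\dot\gamma
=\sum_{i}\dot{\varOmega}^{i}\,E_{i}
+\sum_{i,j}\varOmega^{i}\varOmega^{j}\,\nabla_{E_{j}}E_{i},
\]
where the first sum is just $\gamma(\dot\varOmega)$. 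The second sum is $\gamma(\nabla_{\varOmega}\varOmega)$, with $\nabla_{\varOmega}\varOmega$ evaluated in the Lie algebra through $\nabla_{X}Y=\tfrac12[X,Y]+\mathsf{U}(X,Y)$. Since $[\varOmega,\varOmega]=0$, this gives $\nabla_{\varOmega}\varOmega=\mathsf{U}(\varOmega,\varOmega)$, and the relation $-2\mathsf{U}(X,Y)=\mathrm{ad}^{*}(X)Y+\mathrm{ad}^{*}(Y)X$ with $X=Y=\varOmega$ yields $\mathsf{U}(\varOmega,\varOmega)=-\mathrm{ad}^{*}(\varOmega)\varOmega$. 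Combining the two sums reproduces $\nabla_{\dot\gamma}\dot\gamma=\gamma(\dot\varOmega-\mathrm{ad}^{*}(\varOmega)\varOmega)$, as required.

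The computation is essentially bookkeeping, so I do not expect a genuine obstacle; the two points needing care are the vanishing of the bracket term (the curve contributes the same element $\varOmega$ in both slots, so $[\varOmega,\varOmega]=0$ collapses the antisymmetric part of the connection) and the fact that differentiating the components $\varOmega^{i}$ along $\gamma$ is legitimate precisely because a left-invariant frame furnishes a globally defined trivialization in which $\dot\gamma$ has components $\varOmega^{i}(s)$. With those observations the symmetric term $\mathsf{U}(\varOmega,\varOmega)$ is all that survives, and its identification with $-\mathrm{ad}^{*}(\varOmega)\varOmega$ is immediate from the defining relation for $\mathsf{U}$. The real content of the proposition is thus the reduction of the second-order geodesic equation on $G$ to the first-order Euler-Arnold equation on $\mathfrak{g}$, which the left-invariant framing accomplishes.
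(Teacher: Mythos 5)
Your proposal is correct and follows exactly the route the paper takes: the paper also deduces the proposition immediately from the displayed identity $\nabla_{\dot{\gamma}}\dot{\gamma}=\gamma\bigl(\dot{\varOmega}-\mathrm{ad}^{*}(\varOmega)\varOmega\bigr)$, which it leaves as ``one can check'' with a reference to Arnold. Your left-invariant-frame computation, using $[\varOmega,\varOmega]=0$ and $\mathsf{U}(\varOmega,\varOmega)=-\mathrm{ad}^{*}(\varOmega)\varOmega$ from the defining relation for $\mathsf{U}$, is precisely the standard verification of that identity and is carried out correctly, consistently with the paper's sign conventions for $\mathrm{ad}^{*}$.
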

Now let us assume that $G$ is \emph{compact} and semi-simple. Then 
$G$ admits bi-invariant Riemannian metrics. As we saw in Section \ref{sec:1.1}, we obtain a bi-invariant Riemannian metric on $G$ 
induced from the inner product 
$\langle\cdot,\cdot\rangle_{\mathsf B}=-\alpha\,\mathsf{B}$. 
Here $\mathsf{B}=\mathsf{B}_{\mathfrak g}$ is the 
Killing form of the Lie algebra $\mathfrak{g}$ of $G$. 
The positive constant $\alpha$ will 
be appropriately chosen to normalize the range of sectional curvature in later sections.

Next, introduce a left invariant endomorphism field 
$\mathcal{I}$ (called the 
\emph{moment of inertia tensor field}) on $\mathfrak{g}$ by
\begin{equation}\label{eq:Inertia}
\langle X,Y\rangle=\langle\mathcal{I}X,Y\rangle_{\mathsf B},
\quad \mathcal{I}X,Y\in\mathfrak{g}.
\end{equation}
Since $\mathrm{ad}(X)$ is skew-adjoint with respect to $\langle\cdot,\cdot\rangle_1$ for any 
$X\in\mathfrak{g}$, we obtain
\[
\langle \mathrm{ad}^{*}(\varOmega)\varOmega,Z\rangle
=\langle \mathcal{I}^{-1}([\mathcal{I}\varOmega,\varOmega]),Z\rangle.
\]
Henceforth we obtain
\[
\langle\nabla_{\dot{\gamma}}\dot{\gamma},\gamma Z
\rangle
=\langle \dot{\varOmega}-\mathcal{I}^{-1}[\mathcal{I}\varOmega,\varOmega],Z
\rangle, \quad Z\in\mathfrak{g}.
\]
Let us introduce the 
\emph{momentum} $\mu$ by
\[
\mu=\mathcal{I}\varOmega.
\]
Then $\dot{\varOmega}=\mathcal{I}^{-1}\dot{\mu}$ and hence 
\[
\langle\nabla_{\dot{\gamma}}\dot{\gamma},\gamma Z
\rangle
=\langle \mathcal{I}^{-1}(\dot{\mu}-[\mu,\varOmega]),Z
\rangle, \quad Z\in\mathfrak{g}.
\]
Thus we arrive at the so-called 
\emph{Euler-Arnold equation} \cite{Arnold} 
(also called the \emph{Euler-Poincar{\'e} equation} \cite{BoMa,HMR})
\[
\dot{\mu}-[\mu,\varOmega]=0.
\]

\begin{example}[Rigid bodies]\label{eg:RigidBody}{\rm Let us consider the 
rotation group $G=\mathrm{SO}(3)$. 
The Lie algebra $\mathfrak{g}=\mathfrak{so}(3)$
is parametrized as
\[
\left\{
X=\left(
\left.
\begin{array}{ccc}
0 & -x_3 & x_2\\
x_3 & 0 & -x_1\\
-x_2 & x_1 & 0
\end{array}
\right)
\>\right\vert
\>x_1,x_2,x_3\in\mathbb{R}
\right\}.
\]
The Killing form $\mathsf{B}$ of $\mathfrak{so}(3)$ is given by
\[
\mathsf{B}(X,Y)=\mathrm{tr}(XY),\quad X,Y\in\mathfrak{so}(3).
\]
Let us 
equip the bi-invariant metric on $\mathrm{SO}(3)$ by
$\langle\cdot,\cdot\rangle_{\mathsf{B}}=-\frac{1}{2}\mathsf{B}$. 
Then the correspondence 
\begin{equation}\label{eq:so(3)R3}
\Vec{x}=\left(
\begin{array}{c}
x_1\\
x_2\\
x_3
\end{array}
\right)
\longleftrightarrow 
X=\left(
\begin{array}{ccc}
0 & -x_3 & x_2\\
x_3 & 0 & -x_1\\
-x_2 & x_1 & 0
\end{array}
\right)
\end{equation}
gives an isometric Lie algebra isomorphims 
from $(\mathbb{R}^3,\times)$ and $\mathfrak{so}(3)$. 
Here $\times$ denotes the cross product of $\mathbb{R}^3$.

The motion of a rigid body in Euclidean $3$-space 
$\mathbb{E}^3=(\mathbb{R}^3,\langle\cdot,
\cdot\rangle)$ is 
described as a curve $\gamma(t)$ in $\mathrm{SO}(3)$. 
Then,
$\varOmega_{\mathsf{c}}:=\gamma(t)^{-1}\dot{\gamma}(t)$ and 
$\varOmega_{\mathsf{s}}:=\dot{\gamma}(t)\gamma(t)^{-1}$ are 
\emph{angular velocity in the body} (\emph{corpus}) and 
\emph{angular velocity in the space}, respectively \cite{AM,Arnold}. 
The motion of a rigid body in 
Euclidean $3$-space $\mathbb{E}^3$ under inertia is a geodesic 
in $\mathrm{SO}(3)$ equipped with a left invariant metric 
$\langle\cdot,\cdot\rangle$. The left 
invariant metric is determined by the 
rigid body's mass distribution. 
The mass distribution is described by an 
endomorphism $\mathcal{I}$ on 
$\mathfrak{so}(3)$ which is 
self-adjoint with respect to the 
inner product $\langle\cdot,\cdot\rangle_{\mathsf B}$.
The endomorphism $\mathcal{I}$ is regarded as a left invariant 
endomorphism field on $\mathrm{SO}(3)$ and called 
the (moment of) inertia tensor field. 
The left invariant Riemannian metric 
$\langle\cdot,\cdot\rangle$ is expressed by 
the formula \eqref{eq:Inertia} (see \textit{e.g.}, \cite[\S 4.2]{AM}). 
Since the moment of inertia tensor field $\mathcal{I}$ is a 
self-adjoint 
linear operator on $\mathfrak{so}(3)$ with respect to $\langle\cdot,\cdot\rangle_{\mathsf{B}}$, it is diagonalizable. 
Let $\{I_1,I_2,I_3\}$ be the eigenvalues 
with orthonormal eigenbasis $\{e_1,e_2,e_3\}$ of $\mathfrak{so}(3)$ such that 
$\mathcal{I}e_i=I_{i}e_i$ ($i=1,2,3$) \cite[\S 5.4]{GPS}.  
The eigenvalues $I_1,I_2,I_3$ 
are called the \emph{principal moments of inertia}. 
The axes $\mathbb{R}e_1$, $\mathbb{R}e_2$ and 
$\mathbb{R}e_3$ are called the \emph{principal axes}.

Without loss of generality, we may assume that 
\[
\Vec{e}_1=\left(
\begin{array}{ccc}
0 & 0 & 0\\
0 & 0 & -1\\
0 & 1 & 0
\end{array}
\right),
\quad 
\Vec{e}_2=
\left(
\begin{array}{ccc}
0 & 0 & 1\\
0 & 0 & 0\\
-1 & 0 & 0
\end{array}
\right),
\quad 
\Vec{e}_3=\left(
\begin{array}{ccc}
0 & -1 & 0\\
1 & 0 & 0\\
0 & 0 & 0
\end{array}
\right).
\]
Let us identify $\mathfrak{so}(3)$ with $(\mathbb{R}^3,\times)$ via 
\eqref{eq:so(3)R3}. Then $\mathcal{I}$ is identified with
\[
\left(
\begin{array}{ccc}
I_1 & 0 & 0\\
0 & I_2 & 0\\
0 & 0 & I_3
\end{array}
\right).
\]
Moreover $\varOmega_{\mathsf c}(t)$ is identified with
$\Vec{\omega}(t)=(\omega_{1}(t),\omega_{2}(t),\omega_{3}(t))$
and hence $\mu(t)$ is identified with the vector valued function 
$\Vec{\mu}(t)=(I_1\omega_{1}(t),I_2\omega_{2}(t),I_3\omega_{3}(t))$. 
Thus the Euler-Arnold equation is rewritten as
\[
\dot{\Vec{\mu}}(t)=\Vec{\mu}(t)\times\Vec{\omega}(t).
\]
Note that the Euler-Arnold equation is 
rewritten as
\begin{equation}\label{eq:EA-SO3}
\mathcal{I}\dot{\Vec{\omega}}(t)=\mathcal{I}\Vec{\omega}(t)
\times\Vec{\omega}(t).
\end{equation}
More explicitly, we have the ODE system
\[
I_{1}\dot{\omega_1}(t)=(I_2-I_3)\omega_{2}(t)\omega_{3}(t),
\quad 
I_{2}\dot{\omega_2}(t)=(I_3-I_1)\omega_{3}(t)\omega_{1}(t),
\quad
I_{3}\dot{\omega_3}(t)=(I_1-I_2)\omega_{1}(t)\omega_{2}(t).
\]
When $I_1=I_2=I_3=k\not=0$, then  we have $\omega_1,\omega_2$ and 
$\omega_3$ are constant. Hence $\Vec{\omega}=\Vec{\omega}(0)=:\Vec{\omega}_0$. Thus we obtain the solution $\gamma(t)$ to the Euler-Arnold equation through the identity matrix satisfying the 
initial condition $\Vec{\omega}(0)=\Vec{\omega}_0$ 
as the (homogeneous) geodesic
\[
\gamma(t)=\exp_{\mathfrak{so}(3)}(t\Vec{\omega}_0)
\]
in $\mathrm{SO}(3)$ with respect to 
the bi-invariant metric $\langle\cdot,\cdot\rangle_{\mathsf B}$ through the identity matrix. 

Next, we observe the case $I_1=I_2\not=I_3$. 
Let us use the eigenspace decomposition
\[
\mathfrak{so}(3)=\mathfrak{m}_{1}\oplus\mathfrak{m}_{3},
\]
where
\[
\mathfrak{m}_{1}=\mathbb{R}e_1\oplus\mathbb{R}e_2,
\quad 
\mathfrak{m}_{3}=\mathbb{R}e_3.
\]
Decompose the initial angular velocity 
$\varOmega_{\mathsf c}(0)$ as 
$\varOmega_{\mathsf c}(0)=\varOmega_{\mathsf{c},1}+\varOmega_{\mathsf{c},3}$ along the eigenspace decomposition. 
Then 
the solution $\gamma(t)$ is given by \cite[Example 1.2]{Souris}:
\begin{equation}\label{eq:symmetricEA}
\gamma(t)=
\exp_{\mathfrak{so}(3)}\left\{
t
\left(
\varOmega_{\mathsf{c},1}+\frac{I_3}{I_1}\varOmega_{\mathsf{c},3}
\right)
\right\}
\exp_{\mathfrak{so}(3)}\left\{
t
\left(
\frac{I_1-I_3}{I_1}\varOmega_{\mathsf{c},3}
\right)
\right\}.
\end{equation}
This kind of Euler-Arnold equation appears in 
the so-called symmetric Euler top as well as the precession 
of the Earth's rotational axis 
(\cite[p.~2610]{Souris}, see also \cite[\S 5.6]{GPS}). 
Note that Souris \cite{Souris} used the inner product $2\langle\cdot,\cdot\rangle$ 
on $\mathfrak{so}(3)$.}
\end{example}
\begin{remark}
{\rm 
In Example \ref{eg:RigidBody}, we only discussed 
\emph{torque free} rotational 
motions of rigid bodies
about a fixed point or the center of mass. 
The Euler-Arnold equation of a 
rotational motion of a rigid body with torque 
$\Vec{N}(t)$ has the form:
\[
\mathcal{I}\dot{\Vec{\omega}}(t)=\mathcal{I}\Vec{\omega}(t)\times\Vec{\omega}(t)+\Vec{N}(t).
\]
Typical examples 
of rotational motions of rigid bodies with torque and symmetry 
$I_1=I_2\not=I_3$ are so called 
\emph{heavy symmetric top} with one point fixed. 
For detailed descriptions, see \cite[\S 5.7]{GPS}. 
}
\end{remark}

\subsection{Static magnetism}
Let $(M,g)$ be a Riemannian manifold. 
A (static) \emph{magnetic field} is a closed 
$2$-form $F$ on $M$. The closeness of $F$ is interpreted as 
the \emph{Gauss' law} of the magnetic field $F$. 
The \emph{Lorentz force} $\varphi$ derived from 
$F$ is an endomorphim field defined by
\[
F(X,Y)=g(\varphi X,Y),
\quad X,Y\in\varGamma(TM).
\]
A curve $\gamma(s)$ is said to be a 
\emph{magnetic trajectory} under the influence 
of $F$ if it obeys the 
\emph{Lorentz equation}
\begin{equation}\label{eq:Lorentz}
\nabla_{\dot{\gamma}}\dot{\gamma}=q\varphi\dot{\gamma},
\end{equation}
where $\nabla$ is the Levi-Civita connection of $(M,g)$, 
$\dot{\gamma}$ is the velocity of $\gamma(s)$ and 
$q$ is a constant called the \emph{charge}.

The Lorentz equation implies that 
magnetic trajectories are of constant speed.
Note that when $F=0$ or $q=0$, magnetic trajectories 
reduce to geodesics. For more information on 
magnetic trajectories, we refer to \cite{IM221}. 
We exhibit two classes of static magnetic fields in 
Section \ref{sec:1.4} and \ref{sec:1.5}.

\subsection{Contact magnetic field}\label{sec:1.4}
A $1$-form $\eta$ on a $3$-manifold $M$ is said to be 
a \emph{contact form} if it satisfies 
$\mathrm{d}\eta\wedge \eta\not=0$ on $M$.
A $3$-manifold $M$ together with a 
contact form $\eta$ is called a 
\emph{contact $3$-manifold}. 
A contact $3$-manifold $(M,\eta)$ is orientable. 
We can take a volume element 
$\mathrm{d}v_{\eta}=-\eta\wedge \mathrm{d}\eta/2$. 
Moreover, there exists a 
unique vector field $\xi$ satisfying 
\[
\eta(\xi)=1,\quad \mathrm{d}\eta(\xi,\cdot)=0.
\]
The vector field $\xi$ is called the 
\emph{Reeb vector field} of $(M,\eta)$. 
The plane field
\[
\mathcal{D}=\{X\in TM\>|\>\eta(X)=0\}
\]
is called the \emph{contact structure} of $(M,\eta)$. 

On contact $3$-manifolds, there are two particular classes 
of curves. The integral curves of $\xi$ are called 
\emph{Reeb flows}. On the other hand, curves tangent to 
$\mathcal{D}$ are 
called \emph{Legendre curves}. Both 
Reeb flows and Legendre curves play 
important roles in contact topology (see \textit{e.g}., \cite{Geiges}). 

We may develop static magnetism on 
contact $3$-manifolds. We regard the 
contact form $\eta$ as a \emph{magnetic potential}. 
The magnetic field $F=\mathrm{d}\eta$ is called 
the \emph{contact magnetic field}. There are infinitely many 
Riemannian metrics on $(M,\eta)$. 
The Lorentz force $\varphi$ of $F$ is 
skew-adjoint with respect to any Riemannian metric $g$ on 
$M$. We may require appropriate \emph{compatibility} of $g$ with respect to 
the Lorentz force $\varphi$. Concerning on compatibility, 
we recall the following fundamental fact (see \textit{e.g.} \cite{CFG,Ch-Ha,MPS}).
\begin{proposition}
On a contact $3$-manifold $(M,\eta)$, there exists a 
Riemannian metric $g$ satisfying 
\[
g(\varphi X,\varphi Y)=g(X,Y)-\eta(X)\eta(Y),
\quad 
g(\varphi X,Y)=\mathrm{d}\eta(X,Y),
\quad g(X,\xi)=\eta(X)
\]
for all vector fields $X$ and $Y$ on $M$. 
The metric $g$ is called the 
compatible metric of $\eta$. The volume element 
$\mathrm{d}v_g$ of $g$ coincides with $\mathrm{d}v_{\eta}$. 
The Lorentz force $\varphi$ of $F=\mathrm{d}\eta$ 
satisfies 
\[
\varphi^2=-\mathrm{I}+\eta\otimes\xi.
\]
\end{proposition}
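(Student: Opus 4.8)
The plan is to recognize this as the classical existence theorem for a compatible (associated) metric on a contact manifold and to build the almost contact metric structure $(\varphi,\xi,\eta,g)$ by hand in dimension three. The first point is logical rather than computational: the Lorentz force is \emph{defined} from $g$ through $F(X,Y)=g(\varphi X,Y)$ with $F=\mathrm{d}\eta$, so the middle relation $g(\varphi X,Y)=\mathrm{d}\eta(X,Y)$ is automatic once $\varphi$ is taken to be this Lorentz force. The genuine task is therefore to produce a metric $g$ whose associated Lorentz force $\varphi$ \emph{also} obeys the first and third relations together with $\varphi^{2}=-\mathrm{I}+\eta\otimes\xi$. I would begin by splitting $TM=\mathbb{R}\xi\oplus\mathcal{D}$, where $\mathcal{D}=\ker\eta$, and noting that the contact condition $\mathrm{d}\eta\wedge\eta\neq0$ is equivalent to $\omega:=\mathrm{d}\eta|_{\mathcal{D}}$ being nondegenerate on each fibre of the rank-$2$ bundle $\mathcal{D}$; thus $(\mathcal{D},\omega)$ is a symplectic vector bundle.

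The core construction is the fibrewise \emph{compatible triple} (polar decomposition) applied to $(\mathcal{D},\omega)$. Choose any auxiliary fibre metric $h$ on $\mathcal{D}$ and define the bundle endomorphism $A$ by $\omega(X,Y)=h(AX,Y)$. Since $\omega$ is skew, $A$ is skew-adjoint for $h$, and as $\omega$ is nondegenerate $A$ is invertible, so $-A^{2}$ is symmetric and positive definite. I would set $B:=\sqrt{-A^{2}}$, the smooth positive-definite square root, and put $J:=B^{-1}A$. Using that $A$ and $B$ commute one checks $J^{2}=B^{-2}A^{2}=-\mathrm{I}_{\mathcal{D}}$ and, from skew-adjointness of $A$, that $\omega(JX,JY)=\omega(X,Y)$; then $g_{\mathcal{D}}(X,Y):=\omega(X,JY)$ is symmetric and positive definite on $\mathcal{D}$. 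I then extend to $M$ by declaring $\xi$ a unit vector orthogonal to $\mathcal{D}$:
\[
g(X,Y):=g_{\mathcal{D}}(X_{\mathcal{D}},Y_{\mathcal{D}})+\eta(X)\eta(Y),\qquad \varphi X:=J X_{\mathcal{D}},
\]
where $X_{\mathcal{D}}=X-\eta(X)\xi$ is the $\mathcal{D}$-component; in particular $\varphi\xi=0$.

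The verification is then routine. The identity $g(X,\xi)=\eta(X)$ is immediate from the splitting, and $\varphi^{2}X=J^{2}X_{\mathcal{D}}=-X_{\mathcal{D}}=-X+\eta(X)\xi$ gives $\varphi^{2}=-\mathrm{I}+\eta\otimes\xi$. For the middle relation, since $\eta(\varphi X)=0$ and $g_{\mathcal{D}}(JU,V)=\omega(JU,JV)=\omega(U,V)$, I obtain $g(\varphi X,Y)=\omega(X_{\mathcal{D}},Y_{\mathcal{D}})=\mathrm{d}\eta(X,Y)$, where the $\xi$-components drop out because $\xi\in\ker\mathrm{d}\eta$; this simultaneously confirms that the $\varphi$ just built is exactly the Lorentz force of $F=\mathrm{d}\eta$ for $g$, closing the apparent circularity. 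The first relation follows from $\eta(\varphi X)=\eta(\varphi Y)=0$ and the $J$-invariance of $g_{\mathcal{D}}$, giving $g(\varphi X,\varphi Y)=g_{\mathcal{D}}(X_{\mathcal{D}},Y_{\mathcal{D}})=g(X,Y)-\eta(X)\eta(Y)$. Finally, for the volume element I would pass to an adapted orthonormal frame $\{e_{1},\varphi e_{1},\xi\}$ (note $\varphi e_{1}$ is $g$-unit and $g$-orthogonal to $e_{1}$ because $g(\varphi e_{1},e_{1})=\mathrm{d}\eta(e_{1},e_{1})=0$); reading off $\eta$ and $\mathrm{d}\eta$ in this frame, a direct computation of $-\tfrac{1}{2}\eta\wedge\mathrm{d}\eta$ recovers the metric volume form, so $\mathrm{d}v_{g}=\mathrm{d}v_{\eta}$.

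The one genuinely nontrivial step is the compatible-triple construction: establishing that $-A^{2}$ has a smooth positive-definite square root $B$ and that $g_{\mathcal{D}}$ is positive definite rather than merely nondegenerate. Positivity follows from $g_{\mathcal{D}}(X,X)=\omega(X,JX)=h(AX,JX)=h\bigl(B(JX),JX\bigr)>0$ for $X\neq0$, using $A=BJ$ and positivity of $B$; smoothness holds because the positive square root of a smoothly varying symmetric positive-definite endomorphism is smooth. Everything else is bookkeeping on the splitting $TM=\mathbb{R}\xi\oplus\mathcal{D}$, and the construction depends on the auxiliary metric $h$ only up to the choices, which is irrelevant since the proposition asserts mere existence.
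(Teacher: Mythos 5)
Your proof is correct: the paper itself states this proposition without proof, citing \cite{CFG,Ch-Ha,MPS}, and your fibrewise polar-decomposition construction of the compatible triple $(J,g_{\mathcal{D}})$ on the symplectic bundle $(\mathcal{D},\mathrm{d}\eta|_{\mathcal{D}})$, extended by declaring $\xi$ unit and orthogonal to $\mathcal{D}$, is exactly the standard argument those references rely on, including your correct handling of the apparent circularity in the definition of the Lorentz force. The only point to keep in mind is that the final identity $\mathrm{d}v_g=\mathrm{d}v_\eta=-\tfrac{1}{2}\eta\wedge\mathrm{d}\eta$ is sensitive to the wedge-product and exterior-derivative normalizations, so your frame computation should be carried out in the paper's conventions (which are fixed precisely so that this equality holds).
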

With respect to the compatible metric $g$, the Reeb 
vector field $\xi$ is a unit vector field.

\begin{remark}{\rm
Note that compatible metric $g$ is 
\emph{not} uniquely determined for 
prescribed contact form $\eta$. 
The naturally reductive metric $g$ of the Berger 
sphere $\mathscr{M}^{3}(c)$ studied 
in this article satisfies the condition 
$\pounds_{\xi}g=0$. Here $\pounds_{\xi}$ is the 
Lie differentiation by $\xi$. Namely the Reeb vector field 
of the Berger sphere $\mathscr{M}^{3}(c)$ is a unit Killing 
vector field. A compatible metric $g$ of a contact $3$-manifold $(M,\eta)$ is said to be $K$-\emph{contact metric} if 
it satisfies $\pounds_{\xi}g=0$. To look for nice compatible metric for prescribed 
contact form $\eta$ on (compact) contact $3$-manifolds, Chern and Hamilton \cite{Ch-Ha} suggested 
to consider the variational problem
\[
\mathrm{E}_{\mathrm{CH}}(g)=\int_{M}|\pounds_{\xi}g|^{2}
\,\mathrm{d}v_{\eta}
\]
on the space of all compatible metrics. For recent progress on 
this variational problem, see \cite{MPS}.
}
\end{remark}

\subsection{Magnetic fields on homogeneous Riemannian spaces}\label{sec:1.5}
Let $M=L/H$ be a homogeneous Riemannian space with $L$ compact and semi-simple. Denote by $\mathsf{B}=\mathsf{B}_{\mathfrak l}$ the Killing form of the Lie algebra $\mathfrak{l}$. 
Then, as we saw in section \ref{sec:1.1}, for any 
positive constant $\alpha$, 
$\langle\cdot,\cdot\rangle_{\mathsf B}=(-\alpha)\mathsf{B}$ is 
an $\mathrm{Ad}(L)$-invariant inner product 
on $\mathfrak{l}$, and hence, induces a bi-invariant 
Riemannian metric on $L$. Moreover, 
the tangent space $T_{o}M$ at the origin is identified 
with the orthogonal complement $\mathfrak{p}=\mathfrak{h}^{\perp}$ of 
$\mathfrak{h}$ with respect to $\langle\cdot,\cdot\rangle_{\mathsf B}$. 
The orthogonal decomposition $\mathfrak{l}=\mathfrak{h}\oplus
\mathfrak{p}$ is reductive. The resulting 
homogeneous Riemannian space $M=L/H$ is normal. 
We denote the inner product 
on $\mathfrak{p}$ induced from $\langle\cdot,\cdot\rangle_{\mathsf B}$ by the 
same letter. 
Let $\mathcal{I}$ be the moment of inertia tensor field of $L$. 
Then we obtain an endomorphism $\mathcal{I}_{\mathfrak p}$ of 
$\mathfrak{p}$ defined by
\[
\langle X,Y\rangle_{\mathsf B}=\langle \mathcal{I}_{\mathfrak p}X,Y\rangle_{\mathsf B},
\quad X,Y\in\mathfrak{p}.
\]
Since $\mathcal{I}_\mathfrak{p}$ is self-adjoint with respect to 
$\langle\cdot,\cdot\rangle_{\mathsf B}$ and $\mathrm{Ad}(H)$-invariant, 
$\mathcal{I}_{\mathfrak p}$ is diagonalizable and its 
eigenspaces are $\mathrm{Ad}(H)$-invariant and orthogonal with 
respect to $\langle\cdot,\cdot\rangle_{\mathsf B}$. 

Take an element $\zeta\in \mathfrak{z}(\mathfrak{h})$ of the center 
$\mathfrak{z}(\mathfrak{h})$ of the isotropy algebra $\mathfrak{h}$
and introduce an $\mathrm{Ad}(H)$-invariant $2$-form $\omega^{\zeta}$ on $M$ 
by \cite[(5)]{BJ} (see also \cite[(3)]{AvS}):
\begin{equation}\label{eq:homF}
F^{\zeta}_{o}(X,Y)=-\langle \zeta,[X,Y]\rangle_{\mathsf B},\quad 
X,Y\in\mathfrak{p}.
\end{equation}
One can see that $F^{\zeta}$ is a closed $2$-form on $M$. 
We call $F^{\zeta}$ the \emph{standard invariant magnetic field} of $L/H$. 
We denote by $\varphi^{\zeta}$ the Lorentz force 
associated with the magnetic field $F^{\zeta}$.
Bolsinov and Jovanovi{\'c} \cite{BJ} studied 
magnetic trajectories with respect to the standard invariant magnetic fields 
on normal homogeneous spaces.

Note that when $M$ is an $\mathrm{Ad}(L)$-orbit, then 
$F^{\zeta}$ is nothing but the 
\emph{Kirillov-Kostant-Souriau symplectic form} (see \cite{AM,AvBook,Besse}).

\section{The Berger 3-sphere}\label{section1}

In this section, we give an explicit matrix group model of
the Berger $3$-sphere $\mathscr{M}^3(c)$ equipped with a canonical left invariant 
contact structure.

\subsection{Contact magnetic field of $\mathbb{S}^3$}\label{sec:2.1}
As is well known, the unit $3$-sphere 
$\mathbb{S}^3$ is identified with the special
unitary group 
\[
G=\mathrm{SU}(2)=
\{P\in\mathrm{SL}_{2}\mathbb{C}~|~ \overline{{}^t\!P} P=\Vec{1}\}
\]
with bi-invariant Riemannian metric of constant curvature $1$. Here $\Vec{1}$ denotes 
the identity matrix.

Let us denote the Lie algebra $T_{\Vec{1}}G$ of $G$ by $\mathfrak{g}$ (or $\mathfrak{su}(2)$).
The bi-invariant metric $g_1$ of constant curvature $1$
on $G$ is induced by the following inner
product $\langle \cdot , \cdot \rangle_1$
on $\mathfrak{g}$:
\[
\langle X,Y \rangle_1=-\frac{1}{8}\mathsf{B}(X,Y)=-\frac{1}{2}\ \mathrm{tr} (XY),\quad 
X,Y \in \mathfrak{g}.
\]
Here $\mathsf{B}$ denotes the Killing form of $\mathfrak{g}$. 
We call $g_1$ the \emph{normalized Killing metric}. 
Take a \emph{quaternionic basis} of $\mathfrak{g}$:
\[
\Vec{i}=
\left (
\begin{array}{cc}
0 & \sqrt{-1} \\
\sqrt{-1} & 0
\end{array}
\right ),
\quad 
\Vec{j}=
\left (
\begin{array}{cc}
0 & -1 \\
1 & 0
\end{array}
\right ),
\quad 
\Vec{k}=
\left (
\begin{array}{cc}
\sqrt{-1} & 0 \\
0 & -\sqrt{-1}
\end{array}
\right ).
\]
The Lie group $\mathrm{SU}(2)$ is described as
\[
\mathrm{SU}(2)=
\left \{
\
\left (
\begin{array}{cc}
x_0+\sqrt{-1}~x_3 &
-x_2+\sqrt{-1}~x_1
\\
x_2+\sqrt{-1}~x_1
&
x_0-\sqrt{-1}~x_3
\end{array}
\right )
\quad\biggr \vert
\quad
x_0^2+x_1^2
+x_2^2+x_3^2=1
\right \}.
\]
In the spinor representation of the Euclidean $3$-space $\mathbb{E}^3$,
we identify $\mathbb{E}^3$ with $\mathfrak{g}=\mathfrak{su}(2)$
via the correspondence
\[
(x_1,x_2,x_3)\longleftrightarrow
x_1\Vec{i}+x_2\Vec{j}+x_3\Vec{k}=
\left(
\begin{array}{rr}
\sqrt{-1} ~x_3&  -x_2+\sqrt{-1}~x_1\\[2mm]
x_2+\sqrt{-1}~ x_1& -\sqrt{-1}~x_3
\end{array}
\right).
\]
The cross product $\times$ of $\mathbb{E}^3$ is related to the Lie bracket $[\cdot,\cdot]$ of $\mathfrak{g}$ are related by
\[
\Vec{x}\times\Vec{y}=\frac{1}{2}[\Vec{x},\Vec{y}]
\]
for $\Vec{x}=x_1\Vec{i}+x_2\Vec{j}+x_3\Vec{k}$ and 
$\Vec{y}=y_1\Vec{i}+y_2\Vec{j}+y_3\Vec{k}$.

Denote the left translated vector fields of $\{\Vec{i},
\Vec{j},\Vec{k}\}$ by $\{E_1,E_2,E_3\}$. The commutation relations of
$\{E_1,E_2,E_3\}$ are
\[
[E_1,E_2]=2E_3,\quad
[E_2,E_3]=2E_1,\quad
[E_3,E_1]=2E_2.
\]
The left invariant $1$-form 
\[
\eta_1=g_{1}(E_3,\cdot )
\]
is a \emph{contact form} with 
Reeb vector field 
\[
\xi_1:=E_3.
\]
The metric $g_1$ is compatible to $\eta_1$ and 
the Lorentz force $\varphi_1$ is given by
\[
\varphi_1(E_1)=-E_2,\quad 
\varphi_1(E_2)=E_1,\quad 
\varphi_1(E_3)=0.
\]
It should be remarked that $\xi_1$ is a unit Killing vector field. 
The magnetic field $F_1=\mathrm{d}\eta_1$ will be 
referred to as the \emph{standard contact magnetic field} of 
$\mathbb{S}^3$.
The Levi-Civita connection $\nabla^1$ of the metric $g_1$ satisfies 
\[
(\nabla^{1}_{X}\varphi)Y
=-g_{1}(X,Y)\xi_{1}+\eta_{1}(Y)\xi_{1},
\quad 
\nabla^{1}_{X}\xi_{1}=\varphi X.
\]

The Lie group $G$ acts isometrically on the Lie algebra
$\mathfrak{g}$ by the $\mathrm{Ad}$-action.
\[
\mathrm{Ad}:G \times \mathfrak{g}
\rightarrow \mathfrak{g};
\>\> \mathrm{Ad}(a)X=aXa^{-1},
\quad a \in G,\ X \in \mathfrak{g}.
\]
The $\mathrm{Ad}$-orbit of $\Vec{k}/2$ is a $2$-sphere 
$\mathbb{S}^2(1/2)$ of radius
$1/2$ in the Euclidean $3$-space $\mathbb{E}^3=\mathfrak{g}$. The
$\mathrm{Ad}$-action of $G$ on $\mathbb{S}^2(1/2)$ is isometric and
transitive. The isotropy subgroup of $G$ at $\Vec{k}/2$ is
\[
K_1=\left \{
\left.
\exp(t\Vec{k})=
\left (
\begin{array}{cc}
e^{\sqrt{-1}t}
& 0
\\ 0 &
e^{-\sqrt{-1}t}
\end{array}
\right )
\ \right \vert
\ t \in {\mathbb R}
\right \}\cong \mathrm{U}(1) 
=\{e^{\sqrt{-1}t}\>|\>t\in\mathbb{R}\}.
\]
Hence $\mathbb{S}^2(1/2)$ is represented by 
$G/K_1=\mathrm{SU}(2)/\mathrm{U}(1)$
as a Riemannian symmetric space. The natural projection
\[
\pi_1:{\mathbb{S}}^3 \rightarrow \mathbb{S}^2(1/2),
\quad\pi_1(a)=\mathrm{Ad}(a)(\Vec{k}/2)
\]
is a Riemannian submersion and
defines a principal $\mathrm{U}(1)$-bundle
over $\mathbb{S}^2(1/2)$. This fibering is nothing 
but the well known \emph{Hopf fibering}.
Moreover it is the \emph{Boothby-Wang fibering} of 
$\mathbb{S}^3$ as a regular contact $3$-manifold \cite{BW}.

\subsection{The unit tangent sphere bundle $\mathrm{U}\mathbb{S}^3$}
\label{sec:2.2new}
For each $a\in G$, we may identify the 
representation matrix 
of the orthogonal transformation $\mathrm{Ad}(a):
\mathfrak{g}\to\mathfrak{g}$
with respect to the basis $\{\Vec{i},\Vec{j},\Vec{k}\}$
and the orthogonal transformation $\mathrm{Ad}(a)$ itself. 
Then we obtain a double covering 
$\mathrm{Ad}:\mathrm{SU}(2)\to\mathrm{SO}(3)$. 
The differential map $\mathrm{Ad}_{*\Vec{1}}:
\mathfrak{g}\to\mathfrak{so}(3)$ of $\mathrm{Ad}$ at $\Vec{1}$ is 
a Lie algebra isomorphism from $\mathfrak{su}(2)$ to 
$\mathfrak{so}(3)$ and 
given explicitly by
\begin{equation}\label{eq:su(2)so(3)}
\mathrm{Ad}_{*\Vec{1}}\Vec{i}=2\Vec{e}_{1},
\quad 
\mathrm{Ad}_{*\Vec{1}}\Vec{j}=2\Vec{e}_{2},
\quad 
\mathrm{Ad}_{*\Vec{1}}\Vec{k}=2\Vec{e}_{3}
\in\mathfrak{so}(3).
    \end{equation}
Obviously, $\mathrm{Ad}:\mathrm{SU}(2)\to\mathrm{SO}(3)$ is 
\emph{not} a Riemannian submersion.

On the other hand, the unit tangent sphere bundle 
\[
\mathrm{U}\mathbb{S}^2(1/2)=\{(X,V)\in\mathfrak{g}\times\mathfrak{g}
\>|\>
\langle X,X\rangle_1=\tfrac{1}{4},\>\>\langle V,V\rangle_1=1,
\>\>\langle X,V\rangle_1=0\}
\]
of $\mathbb{S}^2(1/2)$ is a homogeneous space of $G=\mathrm{SU}(2)$. The 
natural projection $\pi_{2}:G\to\mathrm{U}\mathbb{S}^2(4)$ is given 
explicitly by (\textit{cf}. \cite{IM19}):
\[
\pi_{2}(a)=(\mathrm{Ad}(a)\Vec{k}/2,\mathrm{Ad}(a)\Vec{i}).
\]
The isotropy subgroup of $G$ at $(\Vec{k}/2,\Vec{i})$ is 
$\{\pm\Vec{1}\}\cong\mathbb{Z}_2$. Hence 
we obtain the identification $\mathrm{U}\mathbb{S}^2(1/2)=\mathrm{SO}(3)$. 

The projection $\pi_{2}$ is regarded as a 
double covering $\mathrm{SU}(2)\to\mathrm{SO}(3)$ and 
satisfies 
\[
\pi_{2*\Vec{1}}\Vec{i}=\Vec{e}_{1},
\quad 
\pi_{2*\Vec{1}}\Vec{j}=\Vec{e}_{2},
\quad 
\pi_{2*\Vec{1}}\Vec{k}=\Vec{e}_{3}
\in\mathfrak{so}(3).
\]
Thus $\pi_{2}$ is a Riemannian submersion, but 
its differential map $\pi_{2*\Vec{1}}$ is 
\emph{not} a Lie algebra isomorphism.

\begin{remark}{\rm 
In \cite[III.1.4]{CB}, 
the following identification is used 
\[
\mathfrak{su}(2)\ni
x_{1}\Vec{i}
+x_{2}\Vec{j}
+x_{3}\Vec{k}
\longleftrightarrow
-2\left(
\begin{array}{ccc}
0 & -x_3 & x_2\\
x_3 & 0 & -x_1\\
-x_2 & x_1 & 0
\end{array}
\right)
\in\mathfrak{so}(3).
\]}
\end{remark}

\subsection{The Berger $3$-sphere}
For any real number $c>-3$, we deform the 
Riemannian metric $g_1$ of the unit $3$-sphere $\mathbb{S}^3$ as
\[
g(X,Y)
=\frac{4}{c+3}
\left(
g_{1}(X,Y)-
\frac{c-1}{c+3}\,
\eta_1(X)\eta_1(Y)
\right).
\]
The resulting Riemannian $3$-manifold 
$\mathscr{M}^3(c)=(\mathbb{S}^3,g)$ is called 
the \emph{Berger sphere}.

Precisely speaking, the original one due to Berger 
\cite{Berger} is $(\mathbb{S}^3,\frac{c+3}{4}\,g)$ and 
$c\not=1$. Note that under the limit 
$c\to -3$ in Gromov-Hausdorff sense, 
$(\mathbb{S}^3,\frac{c+3}{4}\,g)$ converges to
$\mathbb{S}^3$ equipped with the Carnot-Carath{\'e}odory metric. 
On the other hand, under the limit  $c\to 1$, 
$(\mathbb{S}^3,\frac{c+3}{4}\,g)$ collapses to $\mathbb{S}^2$.
For more Riemannian geometric studies on Berger spheres, we refer to \cite{Lau,Pod,Ra,Sakai81}.

Another geometric property of $\mathscr{M}^3(c)$ is a relation to 
the geometry of \emph{isoparametric hypersurfaces}. 
One can see that $\mathscr{M}^{3}(-2)$ is isometric to the 
universal covering of 
the minimal Cartan hypersurface of the unit $4$-sphere (see \cite{FP}). 
The minimal Cartan hypersurface is realized as 
$\mathrm{SO}(3)/(\mathbb{Z}_2\times\mathbb{Z}_2).$
On the other hand, $\mathscr{M}^{3}(c)$ is realized as a 
geodesic sphere of radius $\tan^{-1}(\sqrt{c-1}/2)$ 
[resp. $\tan^{-1}(\sqrt{1-c}/2)$] in 
the complex projective plane 
$\mathbb{C}P^2(c-1)$ of constant 
holomorphic sectional 
curvature $c-1$ 
[resp. complex hyperbolic plane 
$\mathbb{C}H^2(c-1)$ of constant 
holomorphic sectional 
curvature $c-1$ ] when $c>1$ 
[resp. $-3<c<1$] (\textit{cf.} \cite{W}). 

Let us deform the contact form, 
Reeb vector field and Lorentz force of $\mathbb{S}^3$ as 
\[
\eta:=\frac{4}{c+3} \eta_1,\quad
\xi:=\frac{c+3}{4}\xi_1,\quad
\varphi:=\varphi_1.
\]
Then $g$ is compatible to $\eta$. 
The Berger sphere $\mathscr{M}^3(c)$ with $c\not=1$ is no longer a 
space form, but the sectional curvatures 
$K(X\wedge \varphi X)$ where $X\perp \xi$ is constant $c$.
The sectional curvature $K(X\wedge \varphi X)$ for $X\perp \xi$ is called 
the \emph{holomorphic sectional curvature}. Thus 
$\mathscr{M}^3(c)$ is of constant holomorphic sectional 
curvature $c$.
Note that the contact structure $\mathcal{D}$ of $\mathbb{S}^3$ is invariant under this deformation.

The Reeb vector field $\xi$ generates a one parameter group of
transformations on $\mathscr{M}^3(c)$. Since $\xi$ is a Killing
vector field with respect to the Berger metric, 
this transformation group acts isometrically on $G=\mathrm{SU}(2)$.
The transformation group generated by $\xi$ is identified with the
following Lie subgroup $K=K_c$ of $G$:
\[
K=\left\{\left.\exp\left(\frac{(c+3)t}{4}\,\Vec{k}\right)\>
\right|\>t\in\mathbb{R}
\right\}
\cong
\mathrm{U}(1).
\]
Furthermore, the action of the transformation group generated by
$\xi$ corresponds to the natural right action of $K$ on $G$:
\[
G \times K \rightarrow G;\
(a,k) \mapsto ak.
\]
By using the well-known curvature formula for Riemannian submersion 
due to O'Neill, one can
see that the orbit space $G/K$ is a $2$-sphere 
$\mathbb{S}^2(1/\sqrt{c+3})$ of radius
$1/\sqrt{c+3}$, namely the  constant curvature $(c+3)$-sphere. 
The projection $\pi:G\to\mathbb{S}^2(1/\sqrt{c+3})$ is 
explicitly given by
\[
\pi(a)=\mathrm{Ad}(a)(\Vec{k}/\sqrt{c+3}).
\]
The Riemannian metric $g$ is not
only $G$-left invariant but also $K$-right invariant. Hence
$G\times K$ acts isometrically on $G$. The Berger sphere 
$\mathscr{M}^3(c)$ is represented by $(G\times K)/\Delta K=G$ as a
naturally reductive homogeneous space. For $c\not=1$,
$\mathscr{M}^3(c)$ has $4$-dimensional isometry group. 
For more details, see Section \ref{sec:2.4}.

In particular, $g$ is $G$-bi-invariant if and only if $c=1$. In this
case $\mathscr{M}^3(1)$ is represented by $(G\times G)/\Delta G$ as a
Riemannian symmetric space. Note that $\mathscr{M}^3(1)$ has
$6$-dimensional isometry group.

The deformed structure $(\varphi,\xi,\eta,g)$ is still left invariant 
on $G=\mathrm{SU}(2)$. The inner product 
$\langle\cdot,\cdot\rangle$ on the Lie algebra 
$\mathfrak{g}=\mathfrak{su}(2)$ induced from the 
deformed metric $g$ is related to the standard 
inner product $\langle\cdot,\cdot\rangle_{1}=-\mathsf{B}/8$ via the 
moment of inertia tensor field $\mathcal{I}$ 
defined by
\[
\mathcal{I}\Vec{i}=I_{1}\Vec{i},
\quad 
\mathcal{I}\Vec{j}=I_{2}\Vec{j},
\quad 
\mathcal{I}\Vec{i}=I_{3}\Vec{k},
\quad 
I_{1}=I_{2}=\frac{4}{c+3},
\quad 
I_{3}=\frac{16}{(c+3)^2}.
\]

\subsection{The Levi-Civita connection}

Consider an orthonormal frame field $\{e_1,e_2,e_3\}$ of
$\mathscr{M}^3(c)$ by
\[
e_1:=\frac{\sqrt{c+3}}{2}E_1,\quad 
e_2:=\frac{\sqrt{c+3}}{2}E_2,\quad 
e_3:=\frac{c+3}{4}\xi_1.
\]
Then the commutation relations of this basis are
\[
[e_1,e_2]=2e_3,\quad 
[e_2,e_3]=\frac{c+3}{2}e_1,,\quad  
[e_3,e_1]=\frac{c+3}{2}e_2.
\]
The Levi-Civita connection $\nabla$ of $(\mathscr{M}^3(c),g)$ is
described by
\[
\nabla_{e_1}e_1=0,\quad 
\nabla_{e_1}e_2=e_3,\quad 
\nabla_{e_1}e_3=-e_2,
\]
\[
\nabla_{e_2}e_1=-e_3,\quad 
\nabla_{e_2}e_2=0,\quad 
\nabla_{e_2}e_3=e_1,
\]
\[
\nabla_{e_3}e_1=\frac{c+1}{2}e_2,\quad 
\nabla_{e_3}e_2=-\frac{c+1}{2}e_1,\quad 
\nabla_{e_3}e_3=0.
\]
The Riemannian curvature
tensor field $R$ of $(\mathscr{M}^{3}(c),g,\nabla)$
is described by
\begin{equation*}
R_{1212}=c,\quad  R_{1313}=R_{2323}=1,
\end{equation*}
and the sectional curvatures are:
\begin{equation*}
K_{12}=c,\quad 
K_{13}=K_{23}=1.
\end{equation*}
The Ricci tensor field $\mathrm{Ric}$ and the
scalar curvature $\mathrm{scal}$ are computed to be
\begin{equation*}
\mathrm{Ric}_{11}=\mathrm{Ric}_{22}
=c+1,\quad  \mathrm{Ric}_{33}=2,\quad \mathrm{scal}=2(c+2).
\end{equation*}
 
The symmetric tensor $\mathsf{U}$, defined by \eqref{eq:U}, has the following 
essential components
\[
\mathsf{U}(e_1,e_3)=\frac{c-1}{4}e_2,
\quad 
\mathsf{U}(e_2,e_3)=-\frac{c-1}{4}e_1,
\]
other components being zero.

The covariant derivatives of $\varphi$ and $\xi$ are given by
\[
(\nabla_{X}\varphi)Y=-g(X,Y)\xi+\eta(Y)X,
\quad 
\nabla_{X}\xi=\varphi X.
\]

\subsection{Homogeneous structure}
\label{sec:2.4}
As we saw before $G$ acts isometrically and transitively 
on $\mathbb{S}^2(1/\sqrt{c+3})$ via the Ad-action. The Lie algebra 
$\mathfrak{k}/\sqrt{c+3}$ of the isotropy subgroup $K$ is 
$\mathbb{R}\Vec{k}$. The tangent space 
$T_{\Vec{k}}\mathbb{S}^2(1/\sqrt{c+3})$ is identified with
$\mathfrak{m}=\{x_1\Vec{i}+x_2\Vec{j}\>|\>x_1,x_2\in\mathbb{R}\}$. We have the orthogonal 
direct sum decomposition $\mathfrak{g}=\mathfrak{k}\oplus\mathfrak{m}$. 
The linear subspace $\mathfrak{m}$ induces a 
left invariant contact structure 
on $G$.

The Riemannian metric $g$ of $\mathscr{M}^{3}(c)$ is invariant 
under the action of $G\times K$:
\[
(G\times K)\times G\to G;\>\>((a,k),x)\longmapsto axk^{-1}.
\]
The isotropy subgroup at $\Vec{1}$ is 
\[
\Delta K=\{(k,k)\>|\>k\in K\}\cong K.
\] 
Hence we obtain a homogeneous Riemannian space 
representation $\mathscr{M}^3(c)=(G\times K)/\Delta K$. 
Note that when $c=1$, the metric $g_1$ is 
$G\times G$-invariant and 
$\mathbb{S}^3=(G\times G)/\Delta G$ is a Riemannian symmetric space.

The tangent space $T_{\Vec{1}}\mathscr{M}^3(c)$ 
of $\mathscr{M}^3(c)$ at $\Vec{1}$ is 
identified with 
the linear subspace
\[
\mathfrak{p}(c)=\left\{\left.
\left(
V+W,\frac{1-c}{4}~W
\right)
\>\right|\>V\in\mathfrak{m},\>W\in\mathfrak{k}\right\}.
\]
The Lie algebra of $G\times K$ is $\mathfrak{g}\oplus\mathfrak{k}$ and
the Lie algebra of $\Delta K$ is given by
\[
\Delta \mathfrak{k}
=\{(W,W)\>|\>W\in\mathfrak{k}\}
\cong\mathfrak{k}.
\]
We have the decomposition
\[
\mathfrak{g}\oplus\mathfrak{k}=\triangle \mathfrak{k}
\oplus \mathfrak{p}(c).
\]
Every element $(X,Y)\in\mathfrak{g}\oplus\mathfrak{k}$ is 
decomposed as
\[
\begin{array}{rcl}
(X,Y) & = & \quad \displaystyle
\left(
\frac{c-1}{c+3}X_{\mathfrak k}+\frac{4}{c+3}Y, ~
\frac{c-1}{c+3}X_{\mathfrak k}+\frac{4}{c+3}Y
\right)\\[4mm]
& & \displaystyle +\left(
X_{\mathfrak{m}}+
\frac{4}{c+3}(X_{\mathfrak{k}}-Y),
-\frac{c-1}{c+3}(X_{\mathfrak{k}}-Y)
\right).
\end{array}
\] 
In particular we have
\[
(X,0)=
\left(
\frac{c-1}{c+3}X_{\mathfrak k}, ~
\frac{c-1}{c+3}X_{\mathfrak k}
\right)
+\left(
X_{\mathfrak{m}}+
\frac{4}{c+3}X_{\mathfrak{k}},
-\frac{c-1}{c+3}X_{\mathfrak{k}}
\right).
\] 
Thus for any tangent vector 
$X\in\mathfrak{g}=T_{\Vec{1}}\mathscr{M}^{3}(c)$, 
the corresponding vector 
$\widehat{X}\in\mathfrak{p}(c)$ is explicitly given by
\[
\widehat{X}=\left(
X_{\mathfrak{m}}+
\frac{4}{c+3}X_{\mathfrak{k}},
-\frac{c-1}{c+3}X_{\mathfrak{k}}
\right).
\]
The Lie algebra $\mathfrak{l}$ is spanned by 
the orthonormal basis
\begin{equation}\label{eq:l-basis}
\{
(e_1,0),(e_2,0),(e_3,0),(0,e_3)
\}.
\end{equation}

It is straightforward to prove that the tensor $\mathsf{U}_\mathfrak{p}$, associated to $\mathfrak{p}$ and
defined by \eqref{eq:NatRed}, vanishes. 
Namely, $\mathscr{M}^{3}(c)=(G
\times K)/K$ is naturally reductive.

\medskip

The exponential map 
$\exp_{G\times K}:\mathfrak{g}\oplus\mathfrak{k}
\to G\times K$ is given explicitly by
\begin{equation}\label{eq:expGK}
\exp_{G\times K}\widehat{X}=
\left(
\exp_{G}\left(
X_{\mathfrak m}+\frac{4}{c+3}X_{\mathfrak k}
\right),
\exp_{K}\left(
\frac{c-1}{c+3}X_{\mathfrak k}
\right)
\right)
\end{equation}
for any $\widehat{X}\in\mathfrak{p}$ corresponding 
to $X=X_{\mathfrak k}+X_{\mathfrak m}\in\mathfrak{g}$.

\medskip

\begin{remark}{\rm
In case $c=1$, the Riemannian symmetric space 
$G=(G\times G)/\Delta G$ admits a reductive decomposition
\[
\mathfrak{g}\oplus\mathfrak{g}
=\triangle\mathfrak{g}+\mathfrak{q},
\quad 
\mathfrak{q}=\{(X,-X)\>|\>X\in\mathfrak{g}\}\cong 
\mathfrak{g}.
\]
Every $(X,Y)\in\mathfrak{g}\oplus\mathfrak{g}$ is decomposed as
\[
(X,Y)=\left(\frac{X+Y}{2},\frac{X+Y}{2}
\right)+
\left(\frac{X-Y}{2},\frac{-X+Y}{2}
\right).
\]
}
\end{remark}

\begin{remark}{\rm
The original Berger sphere is represented as a homogeneous space 
of $\mathrm{SU}(2)\times\mathbb{R}$ (\cite{Chavel}, see also \cite{Sakai81}).
Moreover, the original Berger sphere can be represented as $\mathrm{U}(2)/\mathrm{U}(1)$. 
For these models, see Appendix \ref{sec:A} of the present article.}
\end{remark}
Let us pick up the normal homogeneous $3$-sphere
$\mathbb{S}^3=L/H=(\mathrm{SU}(2)\times\mathrm{U}(1))/\Delta \mathrm{U}(1)$. The inner product $\langle\cdot,\cdot\rangle_1$ of 
$\mathfrak{su}(2)$ is extended to $\mathfrak{l}$ by the rule 
so that \eqref{eq:l-basis} is orthonormal with respect to it. 
We denote the extended inner product by the same letter $\langle\cdot,\cdot\rangle_1$. Since $\mathfrak{z}(\mathfrak{h})=\Delta\mathfrak{u}(1)$, 
we may choose an element $\zeta\in \mathfrak{z}(\mathfrak{h})$ by
\begin{equation}\label{eq:homEta}
\zeta=\left(
\frac{1}{2}\xi_1,\frac{1}{2}\xi_1
\right).
\end{equation}
Then standard invariant magnetic field $F^{\zeta}$ defined by \eqref{eq:homF} 
coincides with the standard contact magnetic field $F=\mathrm{d}\eta$ of $\mathbb{S}^3$.


\section{The Euler-Arnold equation}
Since the Berger sphere $\mathscr{M}^{3}(c)
=(G\times K)/\Delta K$ is naturally reductive, 
every geodesic is an orbit of a one-parameter 
subgroup of $G\times K$. In this section we give an explicit 
representation for geodesics of the Berger sphere $\mathscr{M}^{3}(c)$.

\subsection{}
First we deduce the Euler-Arnold equation 
$\dot{\mu}-[\mu,\varOmega]=0$ for 
$\mathscr{M}^{3}(c)$. 
Let $\gamma(t)$ be an arc length parametrized curve 
in $\mathscr{M}^{3}(c)$. Represent the 
angular velocity as
\[
\varOmega(t)=A(t)e_1+B(t)e_2+C(t)e_3, \quad A(t)^2+B(t)^2+C(t)^2=1.
\]
It follows that the 
momentum $\mu=\mathcal{I}\varOmega$ is 
\[
\mu(t)=\frac{4}{c+3}
\left(
A(t)e_1+B(t)e_2+
\frac{4C(t)}{c+3}e_3
\right).
\]
Since 
\[
[\mu,\varOmega]
=
\frac{2(c-1)C(t)}{c+3}
\left(B(t)e_1-A(t)e_2\right),
\]
the Euler-Arnold equation is the ODE system
\begin{equation}
\label{eq:EA-eq-04}
\left\{
\begin{array}{l}
\dot{A}=~\frac{c-1}{2}~CB,
\\[2mm]
\dot{B}=-\frac{c-1}{2}~CA,
\\[2mm]
\dot{C}=0.
\end{array}
\right.
\end{equation}
From the third equation of the system, we get $C(t)=\cos\theta$ is a constant.
The coefficients $A(t)$ and $B(t)$ are obtained as
\[
\left(
\begin{array}{c}
A(t)
\\[2mm]
B(t)
\end{array}
\right)
=
\left(
\begin{array}{cc}
\cos\{t(\frac{c-1}{2}\cos\theta)\}
& 
\sin\{t(\frac{c-1}{2}\cos\theta)\}
\\[2mm]
-\sin\{t(\frac{c-1}{2}\cos\theta)\}
& 
\cos\{t(\frac{c-1}{2}\cos\theta)\}
\end{array}
\right)
\left(
\begin{array}{c}
A_0
\\[2mm]
B_0
\end{array}
\right)
\]
for some constants $A_0$ and $B_0$ satisfying 
$A_0^2+B_0^2=\sin^2\theta$.

When $c=1$, we obtain
\[
A(t)=A_0,\quad B(t)=B_0,\quad 
C(t)=\cos\theta.
\]
Hence $\varOmega(t)$ is a constant vector. 
Thus the solution of the ODE $\gamma(t)^{-1}\dot
{\gamma}(t)=\varOmega(t)$ under the 
initial condition 
$\gamma(0)=\Vec{1}$ is 
\[
\gamma(t)=\exp_{G}(tX),
\quad \textrm{where} \quad X=A_{0}e_1+B_{0}e_2+\cos\theta\,e_3\in
\mathfrak{g}.
\]
Thus we retrieve the well known fact 
that all geodesics of the Riemannian 
symmetric space $\mathbb{S}^3=(G\times G)/\Delta G$ are homogeneous. 

\begin{remark}{\rm Since $\mathbb{S}^3$ 
is identified with $G=\mathrm{SU}(2)$, the orbit 
of the identity $\Vec{1}$ under the $1$-parameter subgroup 
$\{\exp_{G}(tX)\}_{t\in \mathbb{R}}$ is 
\[
\exp_{G}(tX)\cdot \Vec{1}=\exp_{G}(tX).
\]
}
\end{remark}

\begin{proposition}
Let $\gamma(t)$ be a unit speed geodesic 
of $\mathscr{M}^{3}(c)$ with $c\not=1$ 
starting at $\Vec{1}$ with initial 
angular velocity 
$\varOmega(0)=A_{0}e_1+B_{0}e_2+\cos\theta e_3$. 
Then the angular velocity 
$\varOmega(t)$ of $\gamma(t)$ is given by
\[
\varOmega(t)=A(t)e_1+B(t)e_2+\cos\theta e_3,
\]
where $\theta$ is a constant and 
\begin{equation}
\left\{
\begin{array}{l}
A(t)=~A_{0}\cos\{t(\frac{c-1}{2}\cos\theta)\}
+B_{0}\sin\{t(\frac{c-1}{2}\cos\theta)\},\\[2mm]
B(t)=-A_{0}\sin\{t(\frac{c-1}{2}\cos\theta)\}
+B_{0}\cos\{t(\frac{c-1}{2}\cos\theta)\}.
\end{array}
\right.
\end{equation}
\end{proposition}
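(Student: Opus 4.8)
The plan is to integrate the Euler--Arnold system \eqref{eq:EA-eq-04} directly under the stated initial condition, exploiting the fact that its third equation decouples the dynamics. Since $\gamma$ is a geodesic, its angular velocity $\varOmega=\gamma^{-1}\dot\gamma$ obeys the Euler--Arnold equation, which has already been reduced to the ODE system \eqref{eq:EA-eq-04} for the components $A,B,C$ of $\varOmega$; the entire task is to solve that system.

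First I would read off from $\dot C=0$ that $C(t)$ is constant in $t$, hence $C(t)\equiv C(0)$. Because $\gamma$ is unit speed and $\{e_1,e_2,e_3\}$ is orthonormal, the constraint $A(t)^2+B(t)^2+C(t)^2=1$ forces $|C(0)|\le 1$, which legitimizes writing $C(0)=\cos\theta$ for a constant angle $\theta$. Thus the third component of $\varOmega$ is frozen at $\cos\theta$, and only the pair $(A,B)$ remains to be determined. Substituting the constant value $C=\cos\theta$ into the first two equations turns them into the linear constant-coefficient system $\dot A=\omega B$, $\dot B=-\omega A$, where I set $\omega=\frac{c-1}{2}\cos\theta$ (no assumption $\omega\neq 0$ is needed; when $\cos\theta=0$ the formulas degenerate correctly to $A\equiv A_0$, $B\equiv B_0$).

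The cleanest route from here is the complex substitution $z=A+\sqrt{-1}\,B$, which satisfies $\dot z=-\sqrt{-1}\,\omega z$ and hence $z(t)=e^{-\sqrt{-1}\,\omega t}z(0)$; expanding $e^{-\sqrt{-1}\,\omega t}=\cos(\omega t)-\sqrt{-1}\sin(\omega t)$ and separating real and imaginary parts yields exactly the claimed formulas for $A(t)$ and $B(t)$ with $A(0)=A_0$, $B(0)=B_0$. Equivalently, one recognizes the coefficient matrix $\left(\begin{smallmatrix}0&\omega\\-\omega&0\end{smallmatrix}\right)$ as an infinitesimal rotation and exponentiates it to recover the rotation matrix already displayed above.

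There is no serious obstacle: once the constancy of $C$ decouples the system, everything reduces to a planar harmonic oscillator. The only point deserving a word of care is the normalization step, namely invoking the unit-speed constraint to justify $C(0)=\cos\theta$ and to confirm that $A_0^2+B_0^2=\sin^2\theta$ is consistent with $A(t)^2+B(t)^2$ being preserved along the flow (it is, since $z\mapsto e^{-\sqrt{-1}\,\omega t}z$ is an isometry of $\mathbb{C}$). This consistency check simultaneously re-confirms that $\varOmega(t)$ remains on the unit sphere of $\mathfrak{g}$, as it must for a unit-speed geodesic.
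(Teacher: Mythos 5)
Your proof is correct and follows essentially the same route as the paper: both reduce the problem to the Euler--Arnold system \eqref{eq:EA-eq-04}, read off from $\dot C=0$ that $C\equiv\cos\theta$, and then solve the resulting linear system for $(A,B)$ as a rotation with angular frequency $\tfrac{c-1}{2}\cos\theta$. Your complex substitution $z=A+\sqrt{-1}\,B$ and the unit-speed consistency check are just tidy presentational variants of the paper's rotation-matrix formula, not a different argument.
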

It should be remarked that 
every unit speed geodesic $\gamma(t)$ makes 
constant angle $\theta$ with the Reeb flow.
In other words, the angle $\theta$ (called the 
\emph{contact angle}) of a unit speed geodesic 
is a first integral.

\subsection{}
Hereafter we assume that $c\not=1$. 
We wish to solve 
the ODE $\dot{\gamma}(t)=\gamma(t)\varOmega(t)$ 
under the initial condition 
$\gamma(0)=\Vec{1}$. 
For this purpose here we prove the 
following lemma.
\begin{lemma}\label{lem:geodesiclemma}
A curve 
$\gamma_{W,V}(t)=\exp_{G}(tW)\exp_{K}(-tV)$, where 
$W=ue_1+ve_2+we_3\in\mathfrak{g}$ and $V=\sigma e_3\in\mathfrak{k}$ is 
a geodesic in $\mathscr{M}^{3}(c)$ with $c\not=1$ if and 
only if $\sigma=(1-c)w/4$.
\end{lemma}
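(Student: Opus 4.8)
The plan is to reduce the statement to the Euler--Arnold characterization of geodesics on $G=\mathrm{SU}(2)$ equipped with the left-invariant Berger metric: the curve $\gamma_{W,V}$ is a geodesic precisely when its angular velocity $\varOmega(t)=\gamma_{W,V}(t)^{-1}\dot\gamma_{W,V}(t)$ solves the system \eqref{eq:EA-eq-04}. First I would compute $\varOmega$ directly. Differentiating $\gamma_{W,V}=\exp_G(tW)\exp_K(-tV)$ and left-translating, the two product terms combine into
\[
\varOmega(t)=\mathrm{Ad}\bigl(\exp_K(tV)\bigr)\,W-V.
\]
Since $V=\sigma e_3\in\mathfrak{k}$ lies in the Reeb direction, $\mathrm{ad}(V)$ annihilates $e_3$ and acts on the $e_1$--$e_2$ plane as an infinitesimal rotation: from $[e_3,e_1]=\tfrac{c+3}{2}e_2$ and $[e_3,e_2]=-\tfrac{c+3}{2}e_1$ one reads off that $\mathrm{Ad}(\exp_K(tV))=\exp\bigl(t\,\mathrm{ad}(V)\bigr)$ rotates the $(e_1,e_2)$-components with angular frequency $\omega_0:=\tfrac{c+3}{2}\sigma$. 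Hence $\varOmega(t)=A(t)e_1+B(t)e_2+(w-\sigma)e_3$, where $(A,B)$ is obtained from $(u,v)$ by rotation at frequency $\omega_0$, and $\lvert\varOmega\rvert^2=u^2+v^2+(w-\sigma)^2$ is constant as it must be.

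Next I would substitute this $\varOmega$ into \eqref{eq:EA-eq-04}. The third equation $\dot C=0$ holds automatically because $C=w-\sigma$ is constant. Differentiating the rotation gives $\dot A=-\omega_0 B$ and $\dot B=\omega_0 A$, so the first two equations hold for all $t$ (with $(u,v)\neq(0,0)$) exactly when $-\omega_0=\tfrac{c-1}{2}(w-\sigma)$. Inserting $\omega_0=\tfrac{c+3}{2}\sigma$ and clearing the common factor yields $(c+3)\sigma=-(c-1)(w-\sigma)$, that is $4\sigma=(1-c)w$, which is exactly $\sigma=(1-c)w/4$. This proves both implications at once, since the Euler--Arnold system is satisfied if and only if this relation holds.

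The step requiring the most care is the evaluation of $\mathrm{Ad}(\exp_K(tV))$ with the nonstandard structure constants of $\{e_1,e_2,e_3\}$: one must track the factor $(c+3)/2$ in the rotation frequency, because it is precisely this factor combining with the $(c-1)/2$ of \eqref{eq:EA-eq-04} that produces the clean coefficient $(1-c)/4$. A minor caveat concerns the degenerate direction $u=v=0$: there $A\equiv B\equiv 0$, the first two equations of \eqref{eq:EA-eq-04} are vacuous, and $\gamma_{W,V}$ reduces to the reparametrized Reeb orbit $\exp_G\bigl(t(w-\sigma)e_3\bigr)$, which is a geodesic for every $\sigma$; so the stated equivalence is to be read for initial velocities with nonzero $\mathfrak{m}$-component. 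As an independent check one can match $\gamma_{W,V}$ against the homogeneous-geodesic normal form furnished by Proposition~\ref{prop:NR-geodesic}, \eqref{eq:expGK} and the action $((a,k),x)\mapsto axk^{-1}$; this recovers the same relation $\sigma=(1-c)w/4$ after identifying $X_{\mathfrak m}=ue_1+ve_2$ and $\tfrac{4}{c+3}X_{\mathfrak k}=we_3$.
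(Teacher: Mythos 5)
Your proposal is correct and follows essentially the same route as the paper's proof: both compute the angular velocity $\varOmega(t)=\mathrm{Ad}(\exp_{K}(tV))W-V$, observe that $\mathrm{Ad}(\exp_{K}(tV))$ rotates the $(e_1,e_2)$-plane with frequency $\tfrac{(c+3)\sigma}{2}$, and substitute into the Euler--Arnold system \eqref{eq:EA-eq-04} to extract $\sigma=(1-c)w/4$. Your added caveat about the degenerate case $u=v=0$ (where $\gamma_{W,V}$ collapses to a reparametrized Reeb flow and is a geodesic for every $\sigma$, so the ``only if'' direction needs a nonzero $\mathfrak{m}$-component) is a legitimate refinement that the paper's proof leaves implicit.
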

\begin{proof}
The angular velocity 
$\varOmega(t)$ of 
$\gamma_{W,V}(t)$ is 
computed as
\[
\varOmega(t)=
\mathrm{Ad}(\exp_{K}(tV))W-V.
\]
One can deduce that
\begin{align*}
& \mathrm{Ad}(\exp_{K}(tV))e_1
=~\cos\frac{(c+3)\sigma\,t}{2}e_1 +\sin\frac{(c+3)\sigma\,t}{2}e_2,\\
& \mathrm{Ad}(\exp_{K}(tV))e_2
=-\sin\frac{(c+3)\sigma\,t}{2}e_1+\cos\frac{(c+3)\sigma\,t}{2}e_2,\\
& \mathrm{Ad}(\exp_{K}(tV))e_3
=e_3.
\end{align*}
Hence the angular velocity $\varOmega(t)=w_1(t)e_1
+w_2(t)e_2+w_3(t)e_3$ of $\gamma_{W,V}(t)$ is 
given by
\begin{align*}
w_1(t)=&u\cos\frac{(c+3)\sigma\,t}{2}-v\sin\frac{(c+3)\sigma\,t}{2},
\\
w_2(t)=&u\sin\frac{(c+3)\sigma\,t}{2}+
v\cos\frac{(c+3)\sigma\,t}{2},
\\
w_3(t)=&w-\sigma.
\end{align*}
From the Euler-Arnold equation, 
$\gamma_{W,V}(t)=\exp_{G}(tW)\exp_{K}(-tV)$ is a geodesic  
if and only if 
\[
\dot{w}_1=\frac{c-1}{2}w_3w_2,
\quad
\dot{w}_2=-\frac{c-1}{2}w_3w_1,
\quad
\dot{w}_3=0.
\]
This system is explicitly 
computed as
\[
\sigma=\frac{1-c}{4}w.
\]
Thus $\gamma_{V,W}(t)$ is a geodesic 
when and only when $\sigma=(1-c)w/4$.
\end{proof}
From this lemma, 
a geodesic $\gamma(t)$ of the form 
$\gamma(t)=\exp_{G}(tW)\exp_{K}(-tV)$ 
is rewritten as
\begin{align*}
\gamma(t)=&
\exp_{G}(tW)\exp_{K}(-tV)
\\
=&\exp_{G}\{t(ue_1+ve_2+we_3)\}
\exp_{K}\left\{
-t\left(
\frac{1-c}{4}w
\right)e_3
\right\}.
\end{align*}
Now let us set
\begin{equation}\label{eq:initialvector}
X_{\mathfrak m}:=ue_1+ve_2,
\quad 
X_{\mathfrak k}:=\frac{(c+3)w}{4}e_3.
\end{equation}
Then
\[
\exp_{G}\{t(ue_1+ve_2+we_3)\}
=\exp_{G}
\left\{
t
\left(
X_{\mathfrak{m}}+
\frac{4}{c+3}X_{\mathfrak{k}}
\right)
\right\}.
\]
and 
\[
\exp_{K}(-tV)
=\exp_{K}\left\{
-t\left(
\frac{1-c}{4}w
\right)e_3
\right\}
=
\exp_{K}\left\{
-t\left(
\frac{1-c}{c+3}w
\right)X_{\mathfrak{k}}
\right\}.
\]
Hence 
\[
\gamma(t)=\exp_{G}
\left\{t
\left(
X_{\mathfrak{m}}+
\frac{4}{c+3}X_{\mathfrak{k}}
\right)
\right\}
\,
\exp_{K}
\left\{
t\left(
\frac{c-1}{c+3}X_{\mathfrak k}
\right)
\right\}.
\]
Thus if we set 
\[
X=X_{\mathfrak k}+X_{\mathfrak m}\in
\mathfrak{g},
\]
then we get
\begin{equation}\label{eq:3.4new}
\gamma(t)=
\exp_{G}
\left\{t
\left(
X_{\mathfrak{m}}+
\frac{4}{c+3}X_{\mathfrak{k}}
\right)
\right\}
\,
\exp_{K}
\left\{
t\left(
\frac{c-1}{c+3}X_{\mathfrak k}
\right)
\right\}
=\exp_{G\times K}(t\widehat{X}),
\end{equation}
where $\widehat{X}$ is an element of $\mathfrak{p}$ corresponding 
to $X$. 
The formula \eqref{eq:3.4new} shows that $\gamma(t)$ is a homogeneous geodesic. 
Note that \eqref{eq:3.4new} is valid also for the case $c=1$.
Now we arrive at the stage to prove the 
following fundamental fact (see \eqref{eq:expGK} and \textit{cf.} \cite{DZ,Gordon}).

\begin{theorem}
\label{THM3.1}
The geodesic starting at the 
origin of $\mathscr{M}^3(c)$ with initial 
velocity 
$\widehat{X}\in\mathfrak{p}$ is given 
explicitly by
\begin{equation}\label{eq:homgeo}
\gamma(t)=
\exp_{G\times K}(t\widehat{X})=\exp_{G}
\left\{t
\left(
X_{\mathfrak{m}}+
\frac{4}{c+3}X_{\mathfrak{k}}
\right)
\right\}
\,
\exp_{K}
\left\{
t\left(
\frac{c-1}{c+3}X_{\mathfrak k}
\right)
\right\}.
\end{equation}
Here $X=X_{\mathfrak{k}}+X_{\mathfrak{m}}\in\mathfrak{g}$ 
corresponds to $\widehat{X}\in\mathfrak{p}$ under the 
identification $\mathfrak{p}=T_{\Vec{1}}\mathscr{M}^{3}(c)=\mathfrak{g}$.
\end{theorem}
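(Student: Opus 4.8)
The plan is to verify that the explicit curve \eqref{eq:homgeo} is the unique geodesic through the origin $o=\Vec{1}$ carrying the prescribed initial velocity $\widehat{X}$, using the direct construction already prepared in Lemma \ref{lem:geodesiclemma}. Conceptually the result is forced: since the reductive decomposition $\mathfrak{g}\oplus\mathfrak{k}=\Delta\mathfrak{k}\oplus\mathfrak{p}(c)$ is naturally reductive (Section \ref{sec:2.4}), Proposition \ref{prop:NR-geodesic} already guarantees that this geodesic is homogeneous, being an orbit of a one-parameter subgroup of $G\times K$ acting on $o$. The actual work lies in turning that abstract orbit into the concrete product \eqref{eq:homgeo} inside $G$ (compare \eqref{eq:expGK}).

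Concretely I would feed the candidate into Lemma \ref{lem:geodesiclemma}. Writing $X=X_{\mathfrak{k}}+X_{\mathfrak{m}}\in\mathfrak{g}$ for the vector corresponding to $\widehat{X}$, I set $W=X_{\mathfrak{m}}+\frac{4}{c+3}X_{\mathfrak{k}}=ue_1+ve_2+we_3$ and $V=\frac{1-c}{c+3}X_{\mathfrak{k}}=\sigma e_3$. By construction $\sigma=\frac{1-c}{4}w$, which is exactly the geodesic condition $\sigma=(1-c)w/4$ of Lemma \ref{lem:geodesiclemma}; hence $\gamma_{W,V}(t)=\exp_G(tW)\exp_K(-tV)$ is a geodesic, and since $-tV=t\,\frac{c-1}{c+3}X_{\mathfrak{k}}$ this curve is precisely the right-hand side of \eqref{eq:homgeo}.

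It remains to match the initial data and invoke uniqueness. The initial angular velocity of $\gamma_{W,V}$ is $\varOmega(0)=W-V=ue_1+ve_2+(w-\sigma)e_3$, and the identity $w-\sigma=\frac{c+3}{4}w$ shows this equals $X_{\mathfrak{m}}+X_{\mathfrak{k}}=X$, so the initial velocity is indeed $\widehat{X}$. Uniqueness of geodesics with prescribed position and velocity then identifies \eqref{eq:homgeo} with the geodesic of velocity $\widehat{X}$, and the argument is uniform in $c$, covering the symmetric case $c=1$ as well. The one point needing attention throughout is the sign bookkeeping in the $K$-factor — the interplay between the inverse produced by the right $K$-action and the coefficient $\frac{1-c}{c+3}$ of $X_{\mathfrak{k}}$ — which is precisely what makes the $\exp_K$ exponent come out as $+t\,\frac{c-1}{c+3}X_{\mathfrak{k}}$ and constitutes the only genuinely delicate step.
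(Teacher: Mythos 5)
Your proof is correct and takes essentially the same route as the paper: verify via Lemma \ref{lem:geodesiclemma} that the right-hand side of \eqref{eq:homgeo} is a geodesic (your choice $W=X_{\mathfrak m}+\tfrac{4}{c+3}X_{\mathfrak k}$, $V=\tfrac{1-c}{c+3}X_{\mathfrak k}$ yields exactly the lemma's condition $\sigma=(1-c)w/4$), then conclude by uniqueness of geodesics with prescribed initial position and velocity. Your explicit check that $\varOmega(0)=W-V=X_{\mathfrak m}+\tfrac{(c+3)w}{4}e_3=X$ merely fills in a step the paper's terse two-line proof leaves implicit, so nothing is genuinely different.
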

\begin{proof}
Let $\gamma(t)$ be a curve of the form 
\eqref{eq:homgeo}. Then from 
Lemma \ref{lem:geodesiclemma}, 
$\gamma(t)$ is a geodesic 
starting at $\Vec{1}$ with initial velocity 
$\widehat{X}\in \mathfrak{p}$. 

Conversely let $\gamma(t)$ be a geodesic 
starting at $\Vec{1}$ with initial velocity 
$\widehat{X}\in \mathfrak{p}$.
Then 
$\gamma(t)$ is expressed as
$\gamma(t)=\exp_{G\times K}(t\hat{X})$. 
\end{proof}

\subsection{Some remarks on geodesics}
Let us make some remarks:
\begin{remark}[Legendre geodesics]{\rm
A curve $\gamma_X(t)=\exp_G(tX)$, where $X=ue_1+ve_2+we_3\in\mathfrak{g}$ is a geodesic in 
$\mathscr{M}^3(c)$ with $c\neq1$ if and only if it is a Legendre geodesic $\exp_G(t(ue_1+ve_2))$,
with $u^2+v^2=1$. This fact can be verified by the equation 
$\mathsf{U}(X,X)=0$. One can confirm that a Legendre geodesic $\exp_{G}(t(ue_1+ve_2))$ lies 
in the minimal $2$-sphere in $\mathscr{M}^{3}(c)$. 
}
\end{remark}

\begin{remark}[Legendre geodesics]{\rm
A curve 
$\gamma_{W,V}(t)=\exp_{G}(tW)\exp_{K}(-tV)$, where 
$W=ue_1+ve_2+we_3\in\mathfrak{g}$ and $V=\lambda e_3\in\mathfrak{k}$ is 
a Legendre geodesic in $\mathscr{M}^{3}(c)$ with $c\not=1$ if and only if
$W\in\mathfrak{m}$ and $V=0$.
}
\end{remark}

\begin{remark}[Reeb flows]
{\rm
The Reeb flow $\exp_G(\pm(te_3))$ in $\mathscr{M}^{3}(c)$ can be
also expressed as
\[
\exp_{G}\left\{\frac{4t}{c+3}e_3\right\}\,\exp_{K}\left\{
\frac{(c-1)t}{c+3}e_3\right\}.
\]
Note that the Reeb flows are periodic.}
\end{remark}

\begin{remark}[Periodicity]{\rm
A geodesic $\exp_{G}\{t(ue_1+ve_2+we_3)\}
\exp_{K}\left\{
-t\left(
\frac{1-c}{4}w
\right)e_3
\right\}$ in $\mathscr{M}^3(c)$ with $c\neq1$ is periodic if and only if
\[
\frac{\sqrt{(c+3)-(c-1)\cos^2\theta}}{(c+3)(1-c)}\in\mathbb{Q}.
\]
}
\end{remark}

\begin{remark}{\rm
The function $(c+3)-(c-1)\cos^{2}\theta$ plays an important role in the study of geometry of
geodesics and magnetic trajectories. In our recent paper \cite{IM23a}, we introduce
the function $\ell=\frac{(c+3)-(c-1)\cos^{2}\theta}{4}+q\cos\theta$ to study magnetic Jacobi fields
in Sasakian space forms.

On the other hand, in \cite{Engel},  Engel introduced the function $\lambda$ by
\[
\lambda(t):=(c+3)\sin^{2}\theta(t)+4\cos^{2}\theta(t)=(c+3)-(c-1)\cos^{2}\theta(t)
\]
to measure how the geodesics of certain homogeneous Riemannian $3$-manifolds intersect with Reeb flows. 
More precisely, in \cite[Corollary 3.8]{Engel} it is proved the following result:
Let $\gamma(t)$ be a geodesic 
with constant angle $\theta$. 
If $\lambda>0$ along $\gamma$, then 
$\gamma$ intersects with the Reeb flow  
through $\gamma(0)$ periodically. 
If $\lambda\leq 0$ along $\gamma$, then 
$\gamma$ intersects with the Reeb flow through 
$\gamma(0)$ only in $\gamma(0)$.
A periodicity criterion is given in \cite[Corollary 3.12]{Engel}.
}
\end{remark}

\begin{remark}{\rm The function $\lambda$ has an application to 
the study of shortest non-trivial closed geodesics.
In \cite[Corollary 3.13]{Engel} it is proved that: If $c>1$, the shortest 
non-trivial closed geodesics are given by Reeb flows. 
For the case $0<c<1$, the shortest non-trivial closed geodesics 
are given by Legendre geodesics. 
This fact is deduced from the inequality
\[
L(\gamma)\geq \frac{8\pi}
{2\sqrt{\lambda(t)}+(c-1)\cos\theta(t)}.
\]
Here $L(\gamma)$ is the length of a simply closed geodesic.

As a matter of fact, Engel pointed out that the extremal values of the sectional curvature are
$1$ and $c$. The Jacobi operator of a normal geodesic $\gamma(t)$ has eigenvalues
$1$ and $1+(c-1)\sin^{2}\theta$.
For a study on Jacobi fields of geodesics in Berger spheres, we refer to \cite{JW,Z}.
}
\end{remark}

\begin{remark}[Jacobi osculating rank]{\rm
 In \cite[Theorem 4.3]{G-D}, Gonz{\'a}lez-D{\'a}vila states that 
the Jacobi osculating rank of every geodesic 
is $2$ except Reeb flows. The Reeb flows have 
Jacobi osculating rank $0$.
  The conjugate points of a geodesic 
$\gamma(t)$ with constant angle $\theta$ to the origin 
are all of the form $\gamma(t/\sqrt{\lambda(t)})$, 
where $s\in\pi\mathbb{N}$ or 
$t$ is a solution to 
\[
\tan\frac{t}{2}=\frac{1-c}{8}\sin^{2}\theta\,t.
\] 
See \cite[Theorem 5.1]{G-D}. Finally, it is proved that
(in \cite[Proposition 5.3]{G-D}) any geodesic starting at the origin $\Vec{1}$ 
intersects the Reeb flow through $\Vec{1}$ exactly 
at its isotropic conjugate points.
}
\end{remark}

\subsection{Rigid bodies}\label{sec:RB-new}
Here we discuss relations 
between homogeneous geodesics in the 
Berger sphere $\mathscr{M}^3(c)$ and symmetric Euler-Arnold equation 
of torque free motions of rigid bodies.

Let us identify the Lie algebra 
$\mathfrak{g}=\mathfrak{su}(2)$ with 
$\mathfrak{so}(3)$ via the Lie algebra 
isomorphism \eqref{eq:su(2)so(3)}:
\[
x_{1}\frac{\Vec{i}}{2}+
x_{2}\frac{\Vec{j}}{2}+
x_{3}\frac{\Vec{k}}{2}
\longleftrightarrow 
\left(
\begin{array}{ccc}
0 & -x_{3} & x_{2}\\
x_{3} & 0 & -x_{1}\\
-x_{2} & x_{1} &0
 \end{array}
\right).
\]
As we saw in 
Example \ref{eg:RigidBody}, 
we identify $\mathfrak{so}(3)$ with 
$(\mathbb{R}^3,\times)$ through 
the Lie algebra isomorphism 
\eqref{eq:so(3)R3}. 
Under the identification 
$\mathfrak{su}(2)=(\mathbb{R}^3,\times)$, 
the angular velocity 
$\varOmega(t)$ is identified with a vector valued 
function $\Vec{\omega}(t)$. The momentum 
$\mu(t)$ is identified with a 
vector valued function 
$\Vec{\mu}(t)=\mathcal{I}\Vec{\omega}(t)$. 
The Euler-Arnold equation is 
rewritten as \eqref{eq:EA-SO3}. 
Then we can apply the formula \eqref{eq:symmetricEA} with 
$I_1=I_2=4/(c+3)$ and $I_3=16/(c+3)^2$ for 
$\Vec{\mu}(t)$. Obviously, the formula \eqref{eq:symmetricEA} 
has the same form of \eqref{eq:homgeo} under the 
replacements $G\mapsto \mathrm{SO}(3)$ and 
$K\mapsto\mathrm{SO}(2)$.

Thus we confirmed the following fact.

\begin{proposition}\label{prop:4.2}
Let $\gamma(t)$ be a homogeneous geodesics in the Berger sphere 
$\mathscr{M}^3(c)$
starting at $\Vec{1}$. Then its projection image 
in the rotation group $\mathrm{SO}(3)$ describes a motion of rigid body 
in Euclidean $3$-space with principal moments of inertia
\[
I_1=I_2=\frac{4}{c+3},
\quad 
I_3=\frac{16}{(c+3)^2}.
\]
\end{proposition}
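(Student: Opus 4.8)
The plan is to transport the Euler--Arnold description of geodesics on $\mathscr{M}^3(c)$ onto $\mathrm{SO}(3)$ and to recognize the resulting equation of motion as the classical torque-free Euler equation of a symmetric rigid body. Everything needed is already assembled: the explicit homogeneous geodesic \eqref{eq:homgeo} of Theorem~\ref{THM3.1}, the symmetric-top solution \eqref{eq:symmetricEA} of Example~\ref{eg:RigidBody}, and the Lie algebra isomorphism \eqref{eq:su(2)so(3)}.

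First I would fix the identifications. Using \eqref{eq:su(2)so(3)} together with \eqref{eq:so(3)R3}, I identify $\mathfrak{su}(2)\cong\mathfrak{so}(3)\cong(\mathbb{R}^3,\times)$, so that the orthonormal frame $\{e_1,e_2,e_3\}$ is carried to the eigenbasis of the inertia tensor. Under this dictionary the moment of inertia tensor field $\mathcal{I}$ of $\mathscr{M}^3(c)$ becomes $\mathrm{diag}(I_1,I_2,I_3)$ with $I_1=I_2=\frac{4}{c+3}$ and $I_3=\frac{16}{(c+3)^2}$, exactly as recorded in Section~\ref{section1}. The angular velocity $\varOmega(t)$ is then sent to a vector-valued function $\Vec{\omega}(t)$, and the momentum $\mu(t)=\mathcal{I}\varOmega(t)$ to $\Vec{\mu}(t)=\mathcal{I}\Vec{\omega}(t)$.

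Next I would rewrite the Euler--Arnold equation. Under the cross-product identification the equation $\dot\mu-[\mu,\varOmega]=0$ takes the form \eqref{eq:EA-SO3}, namely $\mathcal{I}\dot{\Vec{\omega}}=\mathcal{I}\Vec{\omega}\times\Vec{\omega}$, which is precisely the Euler equation governing a torque-free rigid body with inertia tensor $\mathcal{I}$. Since $I_1=I_2\neq I_3$, this body is a symmetric top, so the closed-form solution \eqref{eq:symmetricEA} applies. Decomposing the initial angular velocity $\varOmega(0)=X$ along $\mathfrak{m}\oplus\mathfrak{k}$ as $X=X_{\mathfrak m}+X_{\mathfrak k}$ and computing $\frac{I_3}{I_1}=\frac{4}{c+3}$ and $\frac{I_1-I_3}{I_1}=\frac{c-1}{c+3}$, the formula \eqref{eq:symmetricEA} reproduces term by term the homogeneous geodesic \eqref{eq:homgeo}. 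Projecting $\gamma(t)$ from $G=\mathrm{SU}(2)$ to $\mathrm{SO}(3)$ through the double cover $\mathrm{Ad}\colon\mathrm{SU}(2)\to\mathrm{SO}(3)$, I conclude that the projected curve is exactly this symmetric-top trajectory, which is the assertion.

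The main obstacle I anticipate is purely bookkeeping: one must keep the several normalizations consistent---the scalings relating $\{\Vec{i},\Vec{j},\Vec{k}\}$, the left-invariant fields $\{E_1,E_2,E_3\}$, the orthonormal frame $\{e_1,e_2,e_3\}$, and the $\mathfrak{so}(3)$ eigenbasis---and in particular one must account for Souris's convention in \eqref{eq:symmetricEA} (which uses the inner product $2\langle\cdot,\cdot\rangle$) so that the eigenvalues of $\mathcal{I}$ emerge as stated. Once this dictionary is in place, the identification of the two ODEs and the matching of their solutions are entirely formal, and the proposition follows.
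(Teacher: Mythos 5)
Your proposal is correct and follows essentially the same route as the paper: Section~\ref{sec:RB-new} likewise transports the Euler--Arnold equation to $\mathrm{SO}(3)$ via the identifications \eqref{eq:su(2)so(3)} and \eqref{eq:so(3)R3}, recognizes it as the symmetric-top equation \eqref{eq:EA-SO3} with $I_1=I_2=\frac{4}{c+3}$, $I_3=\frac{16}{(c+3)^2}$, and observes that Souris's solution \eqref{eq:symmetricEA} coincides with the homogeneous geodesic \eqref{eq:homgeo} under the replacements $G\mapsto\mathrm{SO}(3)$, $K\mapsto\mathrm{SO}(2)$. Your explicit check of the ratios $\frac{I_3}{I_1}=\frac{4}{c+3}$ and $\frac{I_1-I_3}{I_1}=\frac{c-1}{c+3}$, and your attention to Souris's normalization $2\langle\cdot,\cdot\rangle$, make the term-by-term matching that the paper leaves implicit fully explicit.
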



\section{The magnetized Euler-Arnold equation}
\label{sec4}
Let us magnetize the Euler-Arnold equation on 
the Berger sphere $\mathscr{M}^3(c)$
by the contact magnetic field $F=\mathrm{d}\eta$. 

\subsection{}
Consider an arc length parametrized curve $\gamma(t)$ 
in $\mathscr{M}^3(c)$ starting at $\Vec{1}$. 
Since the Lorentz force $\varphi$ is left invariant, 
we have 
\[
\varphi\dot{\gamma}=\gamma \varphi\varOmega.
\]
Hence we get
\[
\varphi\varOmega=\mathcal{I}^{-1}
\mathcal{I}\varphi\varOmega
=\mathcal{I}^{-1}
(\mathcal{I}\varphi\varOmega).
\]
Thus we obtain
\[
\langle
\nabla_{\dot{\gamma}}\dot{\gamma}-q\varphi\dot{\gamma},
Z\rangle
=\langle
\mathcal{I}^{-1}(\dot{\mu}-[\mu,\varOmega]
-q\mathcal{I}\varphi\varOmega)
,Z\rangle
\]
for all $Z$. Thus the Lorentz equation is rewritten as
the \emph{magnetized Euler-Arnold equation}:
\begin{equation}\label{eq:Mag-EA}
\dot{\mu}-[\mu,\varOmega]
-q\mathcal{I}\varphi\varOmega=0.
\end{equation}
Decomposing the angular velocity $\varOmega$ as 
$\varOmega(t)=A(t)e_1+B(t)e_2+C(t)e_3$ we have
\[
\varphi\varOmega=B(t)e_1-A(t)e_2
\]
and hence
\[
\mathcal{I}\varphi\varOmega=
\frac{4}{c+3}(B(t)e_1-A(t)e_2).
\]
It follows that the magnetized Euler-Arnold equation is 
equivalent to the following ODE system:
\begin{equation}\label{eq:5.2}
\left\{
\begin{array}{l}
\dot{A}=~\big(q+\frac{c-1}{2}C\big)B,
\\[2mm]
\dot{B}=-\big(q+\frac{c-1}{2}C\big)A,
\\[2mm]
\dot{C}=0.
\end{array}
\right.
\end{equation}
Hence $C$ is a constant, say $C=\cos\theta$. 
Put $\tilde{q}=q+\frac{c-1}{2}\cos\theta$. 
Then the coefficients $A(t)$ and $B(t)$ are solutions of the ODE system
\[
\dot{A}(t)=\tilde{q}B(t),\quad 
\dot{B}(t)=-\tilde{q}A(t).
\]
Hence we obtain 
\[
A(t)=A_{0}\cos(\tilde{q}t)+B_{0}\sin(\tilde{q}t),
\quad 
B(t)=-A_{0}\sin(\tilde{q}t)+B_{0}\cos(\tilde{q}t),
\]
for some constants $A_0$ and $B_0$. Note that 
$A_0^2+B_0^2=\sin^2\theta$.

We want now to determine contact magnetic trajectories of the form
\[
\gamma(t)=\exp_{G}(tW)\exp_{K}(-tV),\quad \mbox{where}\>\> 
W=ue_1+ve_2+we_3\in\mathfrak{g} \>\>\mbox{and } V=\sigma e_3\in\mathfrak{k}.
\]

\medskip

We make the following notation
\[
\varOmega(t)=\gamma^{-1}(t)\dot{\gamma}(t):=w_1(t)e_1+w_2(t)e_2+w_3(t)e_3.
\]
Since $\varOmega(t)=\mathrm{Ad}(\exp_{K}(tV))W-V$ we obtain
\[
\left\{\begin{array}{l}
w_1(t)=u\cos\frac{(c+3)\sigma t}{2}-v\sin\frac{(c+3)\sigma t}{2},\\[2mm]
w_2(t)=u\sin\frac{(c+3)\sigma t}{2}+v\cos\frac{(c+3)\sigma t}{2},\\[2mm]
w_3(t)=w-\sigma.
\end{array}\right.
\]
The curve $\gamma(t)$ is a contact magnetic trajectory if and only if the magnetized 
Euler-Arnold equations are satisfied, that is
\[
\left\{
\begin{array}{l}
\dot{w}_1=~~\big(q+\frac{c-1}{2}w_3\big)w_2,\\[2mm]
\dot{w}_2=-\big(q+\frac{c-1}{2}w_3\big)w_1,\\[2mm]
\dot{w_3}=~~0.
\end{array}
\right.
\]
The third equation is fulfilled. The first two equations yield
\[
\left\{
\begin{array}{l}
~~\big(q+\frac{c-1}{2}w_3+\frac{(c+3)\sigma}{2}\big)
	\left(u\sin\frac{(c+3)\sigma t}{2}+v\cos\frac{(c+3)\sigma t}{2}\right)=0,\\[2mm]
-\big(q+\frac{c-1}{2}w_3+\frac{(c+3)\sigma}{2}\big)
	\left(u\cos\frac{(c+3)\sigma t}{2}-v\sin\frac{(c+3)\sigma t}{2}\right)=0.
\end{array}
\right.
\]
Therefore, $\gamma(t)$ is a contact magnetic trajectory if and only if
\[
q+\frac{c-1}{2}(w-\sigma)+\frac{(c+3)\sigma}{2}=0.
\]
This condition is equivalent to
\[
\sigma=\frac{1-c}{4}w-\frac{q}{2}.
\]
It follows that
\[
\gamma(t)=\exp_{G}\Big(t(ue_1+ve_2+we_3)\Big)
\exp_{K}\left(-t\left(\frac{1-c}{4}w-\frac{q}{2}\right)e_3\right).
\]
\begin{enumerate}
\item When $c=1$: 
we have
\[
\gamma(t)=\exp_{G}
\left(t(ue_1+ve_2+we_3)
\right)\,
\exp_{K}
\left\{
t\left(
\frac{q}{2}\xi
\right)
\right\}.
\]
Set $X=ue_1+ve_2+we_3\in\mathfrak{g}$, then 
$\gamma(t)$ is rewritten as
\[
\gamma(t)=\gamma_{X}(t)\,\exp_{K}
\left\{
t\left(
\frac{q}{2}\xi
\right)
\right\},
\]
where 
$\gamma_{X}(t)=\exp_{G}(tX)$ is a geodesic 
starting at the origin $\Vec{1}$ with 
initial velocity $X$. 
We call the curve $\exp_{K}
\left\{
t\left(
\frac{q}{2}\xi
\right)
\right\}$ a \emph{charged Reeb flow}. 
Thus we proved that the magnetic trajectory 
$\gamma(t)$ is a right translation of 
a homogeneous geodesic by the charged Reeb flow.
\item When $c\not=1$: 
In this case we define 
$X_{\mathfrak k}$ 
and $X_{\mathfrak m}$ 
by \eqref{eq:initialvector}. 
Then $\gamma(t)$ is rewritten as
\[
\gamma(t)=
\exp_{G\times K}(t\widehat{X})
\exp_{K}
\left\{
t\left(
\frac{q}{2}\xi
\right)
\right\}.
\]
\end{enumerate}
Here $\widehat{X}$ is the vector of $\mathfrak{p}$ corresponding to $X\in\mathfrak{g}$.

Henceforth we proved the main theorem of this article.
\begin{theorem}
The contact magnetic trajectory starting at the 
origin of the Berger $3$-sphere $\mathscr{M}^3(c)$ of holomorphic 
sectional curvature $c\not=1$ with initial 
velocity 
$\widehat{X}\in\mathfrak{p}$ 
is given 
explicitly by
\begin{equation}\label{eq:hom-cont-mag}
\gamma(t)=
\exp_{G\times K}(t\widehat{X})
\exp_{K}
\left\{
t\left(
\frac{q}{2}\xi
\right)
\right\}.
\end{equation}
\end{theorem}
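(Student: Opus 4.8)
The plan is to prove \eqref{eq:hom-cont-mag} by mirroring the two-step argument that established Theorem~\ref{THM3.1} for geodesics, the only new ingredient being the charge $q$. First I would pass from the Lorentz equation $\nabla_{\dot{\gamma}}\dot{\gamma}=q\varphi\dot{\gamma}$ to its left-trivialized form. Because $\varphi$ is left invariant we may write $\varphi\dot{\gamma}=\gamma\,\varphi\varOmega$, and pairing $\nabla_{\dot{\gamma}}\dot{\gamma}-q\varphi\dot{\gamma}$ against an arbitrary $Z$ through the inertia tensor $\mathcal{I}$ yields the \emph{magnetized Euler-Arnold equation} \eqref{eq:Mag-EA}, namely $\dot{\mu}-[\mu,\varOmega]-q\mathcal{I}\varphi\varOmega=0$. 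Decomposing $\varOmega=Ae_1+Be_2+Ce_3$ and using $\varphi\varOmega=Be_1-Ae_2$ turns this into the scalar system \eqref{eq:5.2}; the third equation gives $C\equiv\cos\theta$, and the first two make $(A,B)$ rotate rigidly at the single frequency $\tilde{q}=q+\tfrac{c-1}{2}\cos\theta$.

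Second, I would feed in the product ansatz $\gamma(t)=\exp_{G}(tW)\exp_{K}(-tV)$ with $W=ue_1+ve_2+we_3$ and $V=\sigma e_3$, exactly as in Lemma~\ref{lem:geodesiclemma}. Its angular velocity is $\varOmega(t)=\mathrm{Ad}(\exp_{K}(tV))W-V$, whose horizontal components $w_1,w_2$ rotate at the $\mathrm{Ad}$-conjugation rate while $w_3=w-\sigma$ stays constant. Substituting into \eqref{eq:5.2} and requiring the two resulting equations to hold identically in $t$ forces the common scalar coefficient $q+\tfrac{c-1}{2}(w-\sigma)+\tfrac{(c+3)\sigma}{2}$ to vanish, equivalently $\sigma=\tfrac{1-c}{4}w-\tfrac{q}{2}$. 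This is precisely the geodesic constraint $\sigma=(1-c)w/4$ of Lemma~\ref{lem:geodesiclemma} shifted by the charge term $-q/2$.

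Third, I would read off the factorization. With $X_{\mathfrak{m}}=ue_1+ve_2$ and $X_{\mathfrak{k}}=\tfrac{(c+3)w}{4}e_3$ as in \eqref{eq:initialvector}, the solved value gives $-\sigma e_3=\tfrac{c-1}{c+3}X_{\mathfrak{k}}+\tfrac{q}{2}\xi$, so, since $K$ is abelian and $e_3=\xi$, the Reeb factor splits as $\exp_{K}(-tV)=\exp_{K}\{t\tfrac{c-1}{c+3}X_{\mathfrak{k}}\}\,\exp_{K}\{t\tfrac{q}{2}\xi\}$. Combining the first of these with $\exp_{G}(tW)=\exp_{G}\{t(X_{\mathfrak{m}}+\tfrac{4}{c+3}X_{\mathfrak{k}})\}$ reassembles, via \eqref{eq:expGK} and \eqref{eq:homgeo}, into the homogeneous geodesic $\exp_{G\times K}(t\widehat{X})$ of Theorem~\ref{THM3.1}, while the remaining factor is the charged Reeb flow; this is exactly \eqref{eq:hom-cont-mag}. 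Since every $\widehat{X}\in\mathfrak{p}$ is realized by some triple $(u,v,w)$ under the identification $\mathfrak{p}=\mathfrak{g}$, existence together with uniqueness of solutions to the first-order system \eqref{eq:5.2} guarantees that \eqref{eq:hom-cont-mag} is the unique magnetic trajectory with the prescribed initial data, giving both directions of the theorem.

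I expect the crux to be the second step: checking that the ansatz solves \eqref{eq:5.2} for \emph{all} $t$, not merely at $t=0$. The decisive observation is that the magnetic term contributes to \eqref{eq:5.2} only through a rigid shift $\tfrac{c-1}{2}\cos\theta\mapsto\tilde{q}=q+\tfrac{c-1}{2}\cos\theta$ of the horizontal rotation frequency; matching this against the $\mathrm{Ad}$-conjugation rate $\tfrac{(c+3)\sigma}{2}$ imposed by the ansatz is what pins down $\sigma$ and, crucially, lets the entire charge dependence be peeled off as a single extra Reeb rotation $\exp_{K}\{t\tfrac{q}{2}\xi\}$. Once this compatibility of the two rotation rates is verified, the clean product structure \eqref{eq:hom-cont-mag} is forced.
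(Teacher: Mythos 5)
Your proposal reproduces the paper's own proof essentially step for step: the same reduction of the Lorentz equation to the magnetized Euler--Arnold system \eqref{eq:Mag-EA}--\eqref{eq:5.2}, the same product ansatz $\exp_{G}(tW)\exp_{K}(-tV)$ with angular velocity $\mathrm{Ad}(\exp_{K}(tV))W-V$ leading to the same constraint $\sigma=\tfrac{1-c}{4}w-\tfrac{q}{2}$, and the same regrouping via \eqref{eq:initialvector} and \eqref{eq:expGK} into a homogeneous geodesic times the charged Reeb flow. Your closing appeal to existence and uniqueness for the first-order system, which the paper leaves implicit, is a correct and harmless addition rather than a different route.
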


We conclude this section with some observations.

\begin{remark}[Legendre magnetic trajectories]\rm
 A contact magnetic trajectory 
$\gamma_{W,V}(t)=\exp_{G}(tW)\exp_{K}(-tV)$, where 
$W=ue_1+ve_2+we_3\in\mathfrak{g}$ and $V=(\frac{1-c}{4}w-\frac{q}{2}) e_3\in\mathfrak{k}$ 
 in $\mathscr{M}^{3}(c)$ with $c\not=1$ is Legendre if and only if
 $w=-\frac{2q}{c+3}$.
\end{remark}
\begin{remark}[Periodicity]\rm 
The contact magnetic trajectory
$\gamma(t)=\gamma_{X}(t) \exp_{K}\{t(\frac{q}{2}\xi)\}$
in the unit sphere $\mathbb{S}^3$, where 
$\gamma_{X}(t)=\exp_{G}(tX)$ is a geodesic 
starting at the origin $\Vec{1}$ with 
initial velocity $X$, is periodic if and only if 
\[
\frac{q+\sqrt{q^2-4q\cos\theta+4}}{q-\sqrt{q^2-4q\cos\theta+4}}\in\mathbb{Q}.
\]
This is precisely the periodicity condition obtained in \cite[Theorem 6.1]{IM17}.
Here $\theta$ is the constant contact angle of the curve $\gamma$.
\end{remark}

\begin{remark}[Gyrostats]{\rm
In Proposition \ref{prop:4.2}, we observed that 
homogeneous geodesics in the Berger sphere 
$\mathscr{M}^3(c)$
starting at $\Vec{1}$ project down to 
curves in $\mathrm{SO}(3)$ which describe 
motions of rigid body 
in Euclidean $3$-space with principal moments of inertia
\begin{equation}\label{eq:InertiaGyro}
I_1=I_2=\frac{4}{c+3},
\quad 
I_3=\frac{16}{(c+3)^2}.
\end{equation}
Let us have a discussion about the projection image of the contact magnetic curves of $\mathscr{M}^{3}(c)$ in $\mathrm{SO}(3)$. 

We recall a generalization of 
Euler-Arnold equation on $\mathrm{SO}(3)$ by introducing a constant \emph{gyrostatic momentum}. 
As explained in \cite[\S 2.7]{BM}, this momentum 
(\emph{balanced gyrostat}) can be modeled, for example, 
by a balanced rotor rotating with constant angular velocity 
about an axis in the body. 
Let $\Vec{\kappa}\in\mathfrak{so}(3)\cong\mathbb{R}^3$ 
be the angular momentum of the rotor (called the 
\emph{constant gyrostat}). Then the equation of motion 
of a gyrostat \emph{without} (moment of) external forces 
is described as the following modified 
Euler-Arnold equation:
\begin{equation}\label{eq:GyroMag}
\dot{\Vec{\mu}}+\Vec{\omega}\times (\Vec{\mu}+\Vec{\kappa})=\Vec{0}.
\end{equation}
This equation is rewritten as 
\[
\mathcal{I}\dot{\Vec{\omega}}+\Vec{\omega}\times (
\mathcal{I}\Vec{\omega}+\Vec{\kappa})=\Vec{0}.
\]
Such a dynamical system is referred to as 
\emph{Zhukovskii-Volterra case} in \cite[\S 2.7]{BM}. 
Studies on Zhukovskii-Volterra case can be traced back to 
Volterra's work \cite{Volterra} and Zhukovskii's work 
\cite{Zhu}. 
Bolsinov and Borisov discussed 
Poisson geometric approach to Zhukovskii-Volterra case 
\cite[\S 4]{BB}.

Now let us reexamine the magnetized Euler-Arnold equation 
\eqref{eq:Mag-EA} under the identification $\mathfrak{su}(2)\cong 
\mathfrak{so}(3)\cong (\mathbb{R}^3,\times)$. 
The angular velocity 
$\varOmega(t)=A(t)e_1+B(t)e_2+C(t)e_3$ 
is identified with the vector valued 
function
\[
\Vec{\omega}(t)=
\sqrt{c+3}A(t)\Vec{e}_1
+\sqrt{c+3}B(t)\Vec{e}_2
+\frac{(c+3) C(t)}{2}\Vec{e}_3.
\]
as in Section \ref{sec:RB-new}. 
Hence the momentum $\mu(t)=\mathcal{I}\Vec{\omega}(t)$ is 
identified with the vector valued 
function 
\[
\Vec{\mu}(t)=(I_1\omega_1(t),I_1\omega_2(t),
I_3\omega_3(t))=
\frac{4}{\sqrt{c+3}}A(t)\Vec{e}_1
+\frac{4}{\sqrt{c+3}}B(t)\Vec{e}_2
+\frac{8C(t)}{c+3}\Vec{e}_3,
\]
where $I_1$, $I_2$ and 
$I_3$ are given by
\eqref{eq:InertiaGyro}. 
Hence we obtain
\[
\dot{\Vec{\mu}}(t)+\Vec{\omega}(t)\times\Vec{\mu}
(t)
=\frac{4}{\sqrt{c+3}}\left\{
\left(
\dot{A}-\frac{c-1}{2}CB
\right)\Vec{e}_1
+
\left(
\dot{B}+\frac{c-1}{2}CA
\right)\Vec{e}_2
+\dot{C}\Vec{e}_3
\right\}.
\]
Thus the Euler-Arnold equation \eqref{eq:EA-eq-04} 
is rewritten as 
$\dot{\Vec{\mu}}(t)+\Vec{\omega}(t)\times\Vec{\mu}
(t)=\Vec{0}$. 
One can check that  
the magnetized Euler-Arnold equation 
\eqref{eq:Mag-EA} is rewritten as the Euler-Arnold 
equation \eqref{eq:GyroMag} with 
constant gyrostat
\[
\Vec{\kappa}=\frac{4q}{c+3}\Vec{e}_3.
\]
Note that, under the identification \eqref{eq:su(2)so(3)}, 
the Reeb vector field $\xi_1$ of the standard contact form of the 
unit $3$-sphere $\mathbb{S}^3$ corresponds 
to the vector $\Vec{\xi}_1=2\Vec{e}_3$. Thus the gyrostat 
$\Vec{\kappa}$ is rewritten as $\Vec{\kappa}=\frac{2q}{c+3}\Vec{\xi}_1$.

Volterra \cite{Volterra} carried out the 
integration of the equation \eqref{eq:GyroMag} of motion 
in terms of elliptic functions (see also \cite{DGJ}). 
In case $I_1=I_2$, elliptic solutions reduce to 
trigonometric solutions. This fact is consistent with 
our main theorem. Indeed we gave a representation formula 
for magnetic trajectories in $\mathscr{M}^3(c)$ by using the 
exponential map.

Take two positive constants $c_1$ and $c_2$ and set 
\[
\check{e}_1=\sqrt{\frac{c_2}{2}}\Vec{i},
\quad 
\check{e}_2=\sqrt{\frac{c_1}{2}}\Vec{j},
\quad 
\check{e}_3=\frac{\sqrt{c_1c_2}}{2}\Vec{k}.
\]
The basis $\{\check{e}_1,\check{e}_2,\check{e}_3\}$ satisfies 
the commutation relations
\[
[\check{e}_1,\check{e}_2]=2\check{e}_3,\quad 
[\check{e}_2,\check{e}_3]=c_1\check{e}_1,\quad 
[\check{e}_3,\check{e}_1]=c_2\check{e}_2.
\]
Let us equip an inner product 
$\langle\cdot,\cdot\rangle_{c_1,c_2}$ 
on $\mathfrak{su}(2)$ such that 
$\{\check{e}_1,\check{e}_2,\check{e}_3\}$ is 
orthonormal  with respect to it.
The inner product 
$\langle\cdot,\cdot\rangle_{c_1,c_2}$ 
on $\mathfrak{su}(2)$ 
is related to the standard inner product $\langle\cdot,\cdot\rangle_{1}=-\mathsf{B}/8$ via the 
moment of inertia tensor field $\mathcal{I}$ defined by
\[
\mathcal{I}\Vec{i}=\frac{2}{c_2}\Vec{i},
\quad 
\mathcal{I}\Vec{j}=\frac{2}{c_1}\Vec{j},
\quad 
\mathcal{I}\Vec{k}=\frac{4}{c_1c_2}\Vec{k}.
\]
Denote the left invariant Riemannian metric 
induced from $\langle\cdot,\cdot\rangle_{c_1,c_2}$ 
by $g_{c_1,c_2}$.
The Berger sphere metric is retrieve by the choice 
$c_1=c_2=\frac{c+3}{2}$. The standard metric of $\mathbb{S}^3$ is 
obtained by the choice $c_1=c_2=2$. 
One can see that $\eta_{c_1,c_2}=g_{c_1,c_2}(e_3,\cdot)$ 
is a left invariant contact form on $\mathrm{SU}(2)$. 
Then we get a left invariant magnetic field 
$F_{c_1,c_2}=\mathrm{d}\eta_{c_1,c_2}$. 
The Riemannian metric $g_{c_1,c_2}$ is a compatible 
metric to the contact form $\eta_{c_1,c_2}$ 
(see \textit{e.g.}, \cite{I09,Perrone}). 
The Lorentz force of $F_{c_1,c_2}$ is given by
\[
\varphi \check{e}_1=-\check{e}_2,\quad 
\varphi \check{e}_2=\check{e}_1,\quad 
\varphi \check{e}_3=0. 
\]
The metric $g_{c_1,c_2}$ is naturally reductive if and only if 
at least two of $\{c_1,c_2,2\}$ coincide.
Thus, in case $c_1\not=c_2$, geodesics are not 
necessarily homogeneous. Indeed, 
Marinosci \cite[Theorem 3.1]{Marinosci} proved that 
on $(\mathrm{SU}(2),g_{c_1,c_2})$, 
there exist 
three mutually orthogonal 
homogeneous geodesics through each point. 
In particular, if $c_1,c_2,2$ are all distinct,
there are no other homogeneous geodesics.

Volterra's integration  \cite{Volterra} can be applied 
to the magnetized Euler-Arnold 
equation for the contact magnetic field $F_{c_1,c_2}$ on $(\mathrm{SU}(2),g_{c_1,c_2})$. Even if Volterra carried out 
integration for the magnetized Euler-Arnold equation, 
we need one more integration $\gamma^{-1}(t)\dot{\gamma}
(t)=\mu(t)$ for contact magnetic trajectory $\gamma(t)$. 
The explicit parametrization for contact magnetic 
trajectories in 
$(\mathrm{SU}(2),g_{c_1,c_2},F_{c_1,c_2})$ in terms 
of elliptic functions (\textit{e.g.}, 
Weierstrass $\wp$-function) would be an
interesting project. In case $c_1,c_2,2$ are all distinct, 
then contact magnetic trajectories are 
non-homogeneous, in general, but 
Liouville integrable.
}
\end{remark}

\begin{remark}
{\rm In this article, we concentrate 
on magnetic trajectories 
derived from contact magnetic field 
(standard magnetic field \eqref{eq:homF})
on Berger sphere $\mathscr{M}^{3}(c)$. 
In \cite[Theorem 8.2]{DGJ}, the Liouville integrability of magnetic trajectories 
on the standard unit $3$-sphere $\mathbb{S}^3$ with 
respect to certain magnetic field (generalized 
Demchenko case without twisting) is 
discussed. In particular, elliptic solutions 
to magnetic trajectories are given (see 
\cite[Theorem 9.4]{DGJ}).
}
\end{remark}

\appendix

\section{Other models of Berger sphere}\label{sec:A}
In Appendices we exhibit other models 
of Berger spheres.
\subsection{Berger-Chavel model}
Consider the direct product Lie group
\[
\tilde{L}=\mathrm{SU}(2)\times\mathbb{R}
=\{(A,t)\>|\>A\in\mathrm{SU}(2),\>t\in\mathbb{R}\}
\]
of $G=\mathrm{SU}(2)$ and the abelian group $\tilde{K}=\mathbb{R}$. 
The product Lie group $\tilde{L}$ is 
identified with the closed subgroup
\[
\left\{
\left.
\left(
\begin{array}{ccc}
x_0+\sqrt{-1}x_3 
& -x_2+\sqrt{-1}x_1 & 0\\
x_2+\sqrt{-1}x_1
& x_0-\sqrt{-1}x_3
& 0\\
0 & 0 & e^t
\end{array}
\right)
\>\right|\>
x_0^2+x_1^2+x_2^2+x_3^2=1,\,t\in\mathbb{R}
\right\}
\]
of $\mathrm{GL}_3\mathbb{C}$. Note that 
this linear Lie group is isomorphic to the 
multiplicative group $\mathbf{H}^{\times}
=\mathbf{H}\smallsetminus\{0\}$ of the 
skew field $\mathbf{H}$ of quaternions.

The center $\tilde{Z}$ of $\tilde{L}$ is 
\[
\tilde{Z}=\left\{
\left.
\left(
\begin{array}{cc}
\pm \Vec{1} & 0
\\
0 & e^{t}
\end{array}
\right)
\>\right|
\>t\in\mathbb{R}
\right\}.
\]
\begin{theorem}[\cite{BiRe}]
Any non-trivial discrete central subgroup of $\tilde{L}$ is conjugate to exactly one of 
the following discrete subgroups{\rm:}
\begin{align*}
&\varGamma_1=\left\{
\left(
\begin{array}{cc}
\Vec{1} & 0\\
0 & 1
\end{array}
\right),
\>\> 
\left(
\begin{array}{cc}
-\Vec{1} & 0\\
0 & 1
\end{array}
\right)
\right\},
\quad \>
\varGamma_2=\left\{
\left.
\left(
\begin{array}{cc}
\Vec{1} & 0\\
0 & e^{n}
\end{array}
\right)
\>\right|\>n\in\mathbb{Z}
\right\},
\\
& \varGamma_3=\left\{
\left.
\left(
\begin{array}{cc}
(-1)^n\Vec{1} & 0\\
0 & e^{n}
\end{array}
\right)\>
\right|\>n\in
\mathbb{Z}
\right\},
\quad 
\varGamma_4=
\left\{
\left.
\left(
\begin{array}{cc}
\pm \Vec{1} & 0\\
0 & e^{n}
\end{array}
\right)
\>\right|\>n\in\mathbb{Z}
\right\}.
\end{align*}
\end{theorem}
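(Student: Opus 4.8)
The plan is to reduce the statement to an elementary classification of discrete subgroups of the center $\tilde{Z}$, which was computed above to be $\{\pm\Vec{1}\}\times\mathbb{R}\cong\mathbb{Z}_2\times\mathbb{R}$, the $\mathbb{Z}_2$ sitting inside $\mathrm{SU}(2)$ and the $\mathbb{R}$ being the abelian factor $\tilde{K}$. The first thing I would stress is what ``conjugate'' must mean here: since these subgroups are \emph{central}, inner conjugation by elements of $\tilde{L}$ fixes them pointwise, so the only genuine equivalences come from (outer) automorphisms of $\tilde{L}$. Any such automorphism preserves the center, its identity component $\{\Vec{1}\}\times\mathbb{R}$, and the torsion subgroup $\{\pm\Vec{1}\}\times\{1\}$; restricting to $\tilde{Z}$ it must therefore have the form $(\varepsilon,t)\mapsto(\varepsilon,\lambda t)$ with $\lambda\neq0$ (the unique order-two element is fixed, and the identity component is scaled). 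In particular the generator $e$ appearing in $\varGamma_2,\varGamma_3,\varGamma_4$ is only a normalization, and I would use this rescaling to fix the lattice spacing. Throughout I would write the $\mathbb{R}$-factor additively via $t\leftrightarrow e^{t}$.

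To a discrete subgroup $\varGamma\le\tilde{Z}$ I would attach two invariants: the \emph{even part} $\varGamma_0:=\varGamma\cap(\{\Vec{1}\}\times\mathbb{R})$, a discrete subgroup of $\mathbb{R}$ and hence equal to $\{0\}$ or $a\mathbb{Z}$ with $a>0$; and the image $\pi(\varGamma)$ under the projection $\tilde{Z}\to\mathbb{Z}_2$, which is either trivial or all of $\mathbb{Z}_2$ and records whether $\varGamma$ contains an \emph{odd} element $(-\Vec{1},s)$. The one algebraic constraint I would extract is that the square $(-\Vec{1},s)^{2}=(\Vec{1},2s)$ of any odd element is even and so lies in $\varGamma_0$; hence $s=0$ when $\varGamma_0=\{0\}$, while $2s\in a\mathbb{Z}$ (that is, $s\equiv0$ or $s\equiv a/2\pmod a$) when $\varGamma_0=a\mathbb{Z}$.

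The short case analysis then reads off the four families. If $\varGamma$ has no odd element it equals $\varGamma_0$, which is nontrivial hence $a\mathbb{Z}$, giving $\varGamma_2$ after rescaling $a$ to $1$. If $\varGamma$ has an odd element and $\varGamma_0=\{0\}$, the constraint forces $s=0$, so $\varGamma=\{\pm\Vec{1}\}\times\{1\}=\varGamma_1$. If $\varGamma$ has an odd element and $\varGamma_0=a\mathbb{Z}$, the two residues split into the untwisted case $s\equiv0$, where $\varGamma=\{(\pm\Vec{1},na):n\in\mathbb{Z}\}\cong\varGamma_4$, and the twisted case $s\equiv a/2$, where $\varGamma$ is generated by the single odd element $(-\Vec{1},a/2)$ and is isomorphic to $\varGamma_3$; in both subcases rescaling normalizes $a$ to the listed spacing. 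This exhausts every possibility and yields precisely $\varGamma_1,\varGamma_2,\varGamma_3,\varGamma_4$.

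For the ``exactly one'' clause I would separate the four representatives by automorphism-invariant features: $\varGamma_1$ is the only finite one; $\varGamma_4$ is the only infinite one with nontrivial torsion; and $\varGamma_2,\varGamma_3$ are the two torsion-free ones, distinguished by whether they lie in the identity component $\{\Vec{1}\}\times\mathbb{R}$ (yes for $\varGamma_2$, no for $\varGamma_3$). Since finiteness, the torsion subgroup, and the identity component are all preserved under automorphisms of $\tilde{L}$, no two of the four can be equivalent. I expect the genuinely delicate point to be conceptual rather than computational, namely pinning down the right equivalence relation: recognizing that centrality trivializes inner conjugation and that the continuum of lattice spacings $a\mathbb{Z}$ collapses to a single class exactly because of the $\mathbb{R}$-rescaling automorphism. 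The twisted/untwisted dichotomy separating $\varGamma_3$ from $\varGamma_4$ is the other place to take care, but it is forced cleanly by the square constraint above.
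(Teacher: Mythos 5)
The paper contains no proof of this theorem: it is imported verbatim from Biggs--Remsing \cite{BiRe}, where it occurs as a step in their classification of real four-dimensional Lie groups, so there is no internal argument to measure yours against. Judged on its own, your blind proof is correct and essentially complete. The reduction---every discrete central subgroup lies in $\tilde{Z}\cong\mathbb{Z}_2\times\mathbb{R}$; attach the even part $\varGamma_0=\varGamma\cap(\{\Vec{1}\}\times\mathbb{R})\in\{\{0\},\,a\mathbb{Z}\}$ and the parity image in $\mathbb{Z}_2$; use $(-\Vec{1},s)^2=(\Vec{1},2s)\in\varGamma_0$ to force $s=0$, $s\equiv 0$, or $s\equiv a/2\pmod{a}$---does yield exactly the four families, and your automorphism-invariant separation (finiteness, torsion, containment in the identity component $Z(\tilde{L})^{0}=\{\Vec{1}\}\times\mathbb{R}$ of the center) correctly settles the ``exactly one'' clause. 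Most importantly, you isolated the genuine subtlety: literal conjugation fixes central subgroups pointwise, so the equivalence must be by automorphisms of $\tilde{L}$, which is indeed the convention in \cite{BiRe}, where discrete central subgroups up to automorphism parametrize the connected Lie groups with Lie algebra $\mathfrak{su}(2)\oplus\mathbb{R}$ (the five groups listed right after the theorem in the paper). Your determination of the restricted action is also sound: any Lie group automorphism of $\mathbb{Z}_2\times\mathbb{R}$ must fix the unique involution and act on the identity component by $t\mapsto\lambda t$, since $\mathbb{R}$ maps trivially to $\mathbb{Z}_2$ and $\mathbb{Z}_2$ maps trivially to $\mathbb{R}$.

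Two points deserve to be made explicit, though neither is a gap. First, the normalization direction needs the converse of your automorphism computation: not only do automorphisms of $\tilde{L}$ restrict to rescalings of $\tilde{Z}$, but every rescaling $(\varepsilon,t)\mapsto(\varepsilon,\lambda t)$ is realized by an automorphism of $\tilde{L}$ (take the identity on the $\mathrm{SU}(2)$ factor times $t\mapsto\lambda t$ on $\mathbb{R}$); this trivial observation is exactly what collapses the continuum of spacings $a\mathbb{Z}$ to a single class, and you use it tacitly. Second, your phrase ``the two residues split'' presupposes that all odd elements of $\varGamma$ share the same residue mod $a$; this is true because the odd elements form a single coset of $\varGamma_0$ (the product of two odd elements is even), and one line saying so is needed to make the $\varGamma_3$ versus $\varGamma_4$ dichotomy airtight---otherwise a priori $\varGamma$ could contain odd elements of both residues, a possibility your coset remark excludes in one stroke.
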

The Lie algebra $\mathfrak{su}(2)+\mathbb{R}$ of 
$\tilde{L}$ is identified with
\[
\left\{
\left.
x_{1}
\left(
\begin{array}{cc}
\Vec{i} & 0
\\
0 & 0
\end{array}
\right)
+
x_{2}
\left(
\begin{array}{cc}
\Vec{j} & 0
\\
0 & 0
\end{array}
\right)
+
x_{3}
\left(
\begin{array}{cc}
\Vec{k} & 0
\\
0 & 0
\end{array}
\right)
+t
\left(
\begin{array}{cc}
\Vec{0} & 0
\\
0 & 1
\end{array}
\right)
\>\right|\>
x_1,x_2,x_3,t\in\mathbb{R}
\right\}.
\]
The connected Lie groups corresponding to the Lie algebra 
$\mathfrak{su}(2)+\mathbb{R}$ are isomorphic to 
one of the following 
five Lie groups:
\begin{enumerate}
\item $\tilde{L}=\mathrm{SU}(2)\times\mathbb{R}$.
\item $\tilde{L}/\varGamma_1\cong \mathrm{SO}(3)\times\mathbb{R}$.
\item $\tilde{L}/\varGamma_2\cong \mathrm{SU}(2)\times\mathbb{S}^1$.
\item $\tilde{L}/\varGamma_3\cong \mathrm{U}(2)$.
\item $\tilde{L}/\varGamma_4\cong \mathrm{SO}(3)\times\mathbb{S}^1$.
\end{enumerate}

We extend the inner product 
$\langle\cdot,\cdot\rangle_{1}$ of $\mathfrak{su}(2)$ to 
an inner product on 
$\mathfrak{su}(2)+\mathbb{R}$
by the rule
\[
\left\{
\left(
\begin{array}{cc}
\Vec{i} & 0
\\
0 & 0
\end{array}
\right),
\left(
\begin{array}{cc}
\Vec{j} & 0
\\
0 & 0
\end{array}
\right),
\left(
\begin{array}{cc}
\Vec{k} & 0
\\
0 & 0
\end{array}
\right),
\left(
\begin{array}{cc}
\Vec{0} & 0
\\
0 & 1
\end{array}
\right)
\right\}
\]
is orthonormal. 
Then the resulting Riemannian metric 
on $\tilde{L}$ is 
bi-invariant. 
For any $r\in (0,\pi/2]$ we define a 
Lie subalgebra
\[
\mathfrak{h}_{r}=
\mathbb{R}\left\{
\cos r\,
\left(
\begin{array}{cc}
\Vec{k} & 0
\\
0 & 0
\end{array}
\right)+
\sin r
\left(
\begin{array}{cc}
\Vec{0} & 0
\\
0 & 1
\end{array}
\right)
\right\}.
\]
Then the 
corresponding Lie subgroup is 
\[
H_{r}=
\left\{
\left.
\left(
\begin{array}{ccc}
\exp(t\cos r\sqrt{-1}) & 0 & 0\\
0 & \exp(-t\cos r\sqrt{-1}) & 0\\
0 & 0 & \exp(t\sin r)
\end{array}
\right)
\>
\right|
\>t\in\mathbb{R}
\,\right\}.
\]
The orthogonal complement 
$\mathfrak{p}_{r}:=\mathfrak{h}_{r}^{\perp}$ is 
spanned by 
\[
\left\{
\left(
\begin{array}{cc}
\Vec{i} & 0
\\
0 & 0
\end{array}
\right),
\left(
\begin{array}{cc}
\Vec{j} & 0
\\
0 & 0
\end{array}
\right),
-\sin r\,
\left(
\begin{array}{cc}
\Vec{k} & 0
\\
0 & 0
\end{array}
\right)+
\cos r
\left(
\begin{array}{cc}
\Vec{0} & 0
\\
0 & 1
\end{array}
\right)
\right\}.
\]
Let us identify the tangent space  
of $(\mathrm{SU}(2)\times\mathbb{R})/H_{r}$ at the 
origin with $\mathfrak{p}_r$. 
Then $(\mathrm{SU}(2)\times\mathbb{R})/H_{r}$ is a 
normal homogeneous space. The 
normal homogeneous Riemannian space 
$(\mathrm{SU}(2)\times\mathbb{R})/H_{r}$ with 
$r\not=\pi/2$ is the original Berger sphere 
introduced by Berger (see also \cite{Chavel,Sakai81}).
Note that when $r=\pi/2$, 
$(\mathrm{SU}(2)\times\mathbb{R})/H_{\pi/2}=\mathrm{SU}(2)$ is 
a (non-symmetric) normal homogeneous 
representation of the $3$-sphere $\mathbb{S}^3$. 
Sakai \cite{Sakai73} constructed a homothetic Sasakian structure on 
$(\mathrm{SU}(2)\times\mathbb{R})/H_{r}$. 

For higher dimensional generalizations of the Berger sphere, we refer to \cite{OR}.

\subsection{The $\mathrm{U}(2)$-model}
In this section we give a normal homogeneous space representation 
$L/H$ for the Berger sphere with $L=\mathrm{U}(2)
=\{P\in\mathrm{GL}_2\mathbb{C}\>|\>P\,{}^t\!\overline{P}=\Vec{1}\}$.

\subsubsection{}
The Lie algebra $\mathfrak{l}=\mathfrak{u}(2)$ of the 
unitary group $L=\mathrm{U}(2)$ is the direct sum 
of $\mathfrak{su}(2)$ and the center
\[
\mathfrak{z}(\mathfrak{u}(2))
=\mathbb{R}\Vec{z},
\quad 
\Vec{z}=\frac{\sqrt{-1}}{2}\Vec{1}.
\]
Every element $X\in\mathfrak{u}(2)$ is 
expressed as the form
\[
X=\Vec{x}+\frac{\mathrm{tr}\,X}{2}\Vec{1}
=\Vec{x}-\sqrt{-1}\mathrm{tr}\,X\,\Vec{z},
\quad 
\Vec{x}=x_{1}\Vec{i}+x_{2}\Vec{j}+x_{3}\Vec{k}\in\mathfrak{su}(2).
\]
The Killing form $\mathsf{B}_{\mathfrak{u}(2)}$ of 
$\mathfrak{u}(2)$ is given by
\[
\mathsf{B}_{\mathfrak{u}(2)}(X,Y)=
4\mathrm{tr}(XY)-2\mathrm{tr}(X)\mathrm{tr}(Y).
\]
In particular, $\mathsf{B}_{\mathfrak{u}(2)}=0$ on $\mathfrak{z}(\mathfrak{u}(2))$. 
Note that the Killing form $\mathsf{B}$ of $\mathfrak{su}(2)$ is 
the restriction of $\mathsf{B}_{\mathfrak{u}(2)}$ to $\mathfrak{su}(2)$. 
Since $\mathfrak{z}(\mathfrak{u}(2))$ is the center, 
we have $[\mathfrak{z}(\mathfrak{u}(2)),\mathfrak{g}]=\{\Vec{0}\}
\subset\mathfrak{g}$. 
Hence $\mathfrak{u}(2)=\mathfrak{z}(\mathfrak{u}(2))\oplus\mathfrak{g}$ is reductive.
The restriction of 
$\mathsf{B}_{\mathfrak{u}(2)}$ to $\mathfrak{su}(2)$ coincides 
with the Killing form of $\mathfrak{su}(2)$.

We extend the inner product $\langle\cdot,\cdot\rangle_1$ of $\mathfrak{su}(2)$ 
to an inner product $\langle\cdot,\cdot\rangle_{1,\lambda}$ 
on $\mathfrak{u}(2)$ as (see \cite{BV}):
\[
\langle X,Y\rangle_{1,\lambda}=-\frac{1}{2}\mathrm{tr}(\Vec{x}\Vec{y})+\lambda^{2}
\,\mathrm{tr}\,X\,\overline{\mathrm{tr}\,Y},
\quad X,Y\in\mathfrak{u}(2).
\]
Here $\lambda$ is a positive constant. 
The basis $\{\Vec{i},\Vec{j},\Vec{k},\lambda^{-1}\Vec{z}\}$ is 
orthonormal with respect to $\langle \cdot,\cdot\rangle_{1}$.
One can check that the $\mathsf{U}$-tensor \eqref{eq:U} 
of $\mathfrak{u}(2)$ with respect to this inner product vanishes. Thus 
the inner product $\langle \cdot,\cdot\rangle_{1,\lambda}$ induces a bi-invariant Riemannian metric on 
$L=\mathrm{U}(2)$. Note that every bi-invariant Riemannian metric 
on $\mathrm{U}(2)$ is homothetic to the metric induced from $\langle \cdot,\cdot\rangle_{1,\lambda}$ 
(see \cite{BV}).

When $\lambda=1/2$, the inner product 
$\langle X,Y\rangle_{1,1/2}$ coincides with the standard 
inner product:
\[
\langle X,Y\rangle_{1,1/2}=-\frac{1}{2}\mathrm{tr}(XY),
\quad X,Y\in\mathfrak{u}(2).
\]
\begin{remark}{\rm 
The inner product $\langle\cdot,\cdot\rangle_{\alpha}$ used in 
\cite{BV} is written as 
\[
\langle\cdot,\cdot\rangle_{\alpha}=2\langle \cdot,\cdot\rangle_{1,\sqrt{\alpha/2}}.
\]
}
\end{remark}

\subsubsection{}
The 
center $Z(\mathrm{U}(2))$ 
of the unitary group $\mathrm{U}(2)$ is given by
\[
Z(\mathrm{U}(2))
=\exp\mathfrak{z}(\mathfrak{u}(2))
=\left\{\left.
\left(
\begin{array}{cc}
e^{t\sqrt{-1}}/2 & 0
\\
0 & e^{t\sqrt{-1}}/2
\end{array}
\right)
\right
\vert
\>
t\in \mathbb{R}
\right\}.
\]
Obviously $Z(\mathrm{U}(2))$ is isomorphic 
to the circle group $\mathrm{U}(1)=\{z\in\mathbb{C}\>|\>z\bar{z}=1\}$ as 
well as the Lie subgroup $K\subset G$.

\begin{remark}
{\rm 
Via the conjugation, 
$1$-parameter subgroups of $\mathrm{U}(2)$ 
are expressed as
\[
H_{\nu_1,\nu_2}=\left\{\left.
\left(
\begin{array}{cc}
e^{\nu_{1}t\sqrt{-1}} & 0\\
0 & e^{\nu_{2}t\sqrt{-1}}
\end{array}
\right)
\>\right|\>t\in\mathbb{R}
\right\}
\] 
for some $\nu_1$, $\nu_2\in\mathbb{R}$. 
Note that when $\nu_1=\nu_2\not=0$, 
$H_{\nu_1,\nu_1}=Z(\mathrm{U}(2))$.

Moreover, by conjugation and reparametization, we 
can normalize the one-parameter subgroups as
$H_{\nu,1}$ (see \cite[Proposition~3]{Torralbo12}).
}
\end{remark}
One can see that $\mathrm{U}(2)$ is isomorphic 
to the product Lie group $G\times \mathrm{U}(1)$. 
Indeed, let us identify $\mathrm{U}(1)$ with the center $Z(\mathrm{U}(2))$. 
The determinant $\det A$ of $A\in\mathrm{U}(2)$ is a unit 
complex number. So we may represent $\det A$ as 
$\det A=e^{t_A\sqrt{-1}}$.
Then every element $A\in \mathrm{U}(2)$ is decomposed as
\[
A=\left(
A\left(
\begin{array}{cc}
e^{-t_A\sqrt{-1}/2} & 0
\\
0 & e^{-t_A\sqrt{-1}/2}
\end{array}
\right)
\right)
\cdot 
\left(
\begin{array}{cc}
e^{t_A\sqrt{-1}/2} & 0
\\
0 & e^{t_A\sqrt{-1}/2}
\end{array}
\right).
\]
Thus we obtain the Lie group decomposition
\[
\mathrm{U}(2)=G\cdot Z(\mathrm{U}(2)).
\]
This decomposition implies that  
the map 
$\varPsi_0:\mathrm{U}(2)\to G\times K$ by
\[
\varPsi_0(A)=\left(
A
\left(
\begin{array}{cc}
e^{-t_A\sqrt{-1}/2} & 0
\\
0 & e^{-t_A\sqrt{-1}/2}
\end{array}
\right),
\left(
\begin{array}{cc}
e^{-t_A\sqrt{-1}/2} & 0
\\
0 & e^{t_A\sqrt{-1}/2}
\end{array}
\right)
\>
\right)
\]
is a Lie group isomorphism.

Here we describe the action of $L$ on $G$ under 
the identification $L=G\times K$.
Take an element $A\in L$, then split $A$ as
\[
A=A_{G}A_{Z},\quad A_{G}\in G,\>\>
A_{Z}\in Z(L).
\]
The component $A_Z$ is expressred as
\[
A_{Z}=\sqrt{\det A}\,\Vec{1}=\exp(t_{A}\sqrt{-1}/2)\Vec{1}.
\]
Next we set 
\[
A_{K}=\exp(-t_{A}\Vec{k}/2)
=\left(
\begin{array}{cc}
\exp(-t_{A}\sqrt{-1}/2) &0
\\ 0 & \exp(t_{A}\sqrt{-1}/2)
\end{array}
\right)\in K.
\]
Then the action of $A\in L$ on $X\in G$ is defined by
\[
A\cdot X=A_{G}X A_{K}^{-1}.
\]

\begin{example}[Locally polar action]
{\rm Let us consider the 
action of the 
$1$-parameter subgroup $H_{\nu_1,\nu_2}$.
For an element
\[
A=\left(
\begin{array}{cc}
e^{\nu_1 t\sqrt{-1}} &0
\\
0 & e^{\nu_2 t\sqrt{-1}}
\end{array}
\right).
\]
Then $A$ is decomposed as
\[
A_G=
\left(
\begin{array}{cc}
e^{(\nu_1-\nu_2)t\sqrt{-1}/2} &0
\\
0 & e^{-(\nu_1-\nu_2)t\sqrt{-1}/2}
\end{array}
\right),
\quad 
A_K=
\left(
\begin{array}{cc}
e^{-(\nu_1+\nu_2)t\sqrt{-1}/2} &0
\\
0 & e^{(\nu_1+\nu_2)t\sqrt{-1}/2}
\end{array}
\right).
\]
Then for 
\[
X=\left(
\begin{array}{cc}
\alpha & -\bar{\beta}\\
\beta & \bar{\alpha}
\end{array}
\right)\in G,
\]
we have
\[
A_{G}XA_{K}^{-1}=
\left(
\begin{array}{cc}
\alpha e^{\nu_1 t\sqrt{-1}}& -\bar{\beta}e^{-\nu_2 t\sqrt{-1}}\\
\beta e^{\nu_2 t\sqrt{-1}}& \bar{\alpha}e^{-\nu_1 t\sqrt{-1}}
\end{array}
\right).
\]
This action is abbreviated as
\[
\left(e^{\nu_1 t\sqrt{-1}},e^{\nu_2 t\sqrt{-1}}\right)
\cdot (\alpha,\beta)=
\left(\alpha e^{\nu_1 t\sqrt{-1}},\beta e^{\nu_2 t\sqrt{-1}}
\right).
\]
In case $H_{1,1}=Z(L)$,  $A_{G}=\Vec{1}$, 
$A=A_{Z}$ and 
$A_K=\exp(-t\Vec{k})$ for $A\in Z(L)$ and 
we have 
\[
\left(e^{t\sqrt{-1}},e^{t\sqrt{-1}}\right)
\cdot (\alpha,\beta)=
\left(\alpha e^{t\sqrt{-1}},\beta e^{t\sqrt{-1}}
\right).
\]
On the other hand, for the case $H_{1,-1}=K_1$, 
$A=A_G$, $A_Z=\Vec{1}$ and $A_K=A^{-1}$. The action is 
\[
\left(e^{t\sqrt{-1}},e^{-t\sqrt{-1}}\right)
\cdot (\alpha,\beta)=
\left(\alpha e^{t\sqrt{-1}},\beta e^{-t\sqrt{-1}}
\right).
\]
Now let us consider the action of $H_{0,1}$ on $G$. 
In this case $A_G=A_K$ for any $A\in H_{0,1}$. 
Thus $A\cdot X=\mathrm{Ad}(A_K)X$. 
The action is 
\[
\left(1,e^{t\sqrt{-1}}\right)
\cdot (\alpha,\beta)=
\left(\alpha,\beta e^{t\sqrt{-1}}
\right).
\]
Di Scala \cite{Di} proved that the action of $H_{0,1}$ on $G$ is locally polar 
when and only when $c=1$.
}
\end{example}

The product Lie group  
$\tilde{L}=\mathrm{SU}(2)\times\mathbb{R}$ is regarded as the 
universal covering 
of $\mathrm{U}(2)$. Indeed, let us 
define a map $\tilde{\psi}:
\tilde{L}\to \mathrm{U}(2)$ by
\[
\tilde{\psi}\left(\>
\left(
\begin{array}{cc}
\alpha & -\bar{\beta} \\
\beta & \bar{\alpha}
\end{array}
\right),
t
\>
\right)=\left(
\begin{array}{cc}
\alpha & -\bar{\beta} \\
\beta & \bar{\alpha} 
\end{array}
\right)
\left(
\begin{array}{cc}
e^{t\sqrt{-1}/2} & 0 \\
0 & e^{t\sqrt{-1}/2} 
\end{array}
\right).
\]
Then $\tilde{\psi}$ factors through a 
Lie group isomorphism 
$\psi:\mathrm{SU}(2)\times\mathrm{U}(1)\to\mathrm{U}(2)$ and 
$\psi=\varPsi_0^{-1}$ holds. 

\begin{remark}
{\rm Under $\tilde{\psi}$, the isotropy 
subgroup $H_r\subset\mathrm{SU}(2)\times\mathbb{R}$ is 
transformed to 
$H_{\nu_1,\nu_2}$ with
\[
\nu_1=\cos r+\frac{1}{2}\sin r,
\quad 
\nu_2=-\cos r+\frac{1}{2}\sin r.
\]
In particular, when $r=\pi/2$, $H_{\pi/2}$ corresponds to 
$Z(\mathrm{U}(2))$. 
Next if we choose $r=\tan^{-1}2\in (0,\pi/2)$, we have 
$\nu_1=2/\sqrt{5}$ and $\nu_2=0$.
}
\end{remark}
\subsubsection{}

Let us pay attention to 
\[
H=
H_{0,1}=
\left\{\left.
\left(
\begin{array}{cc}
1 & 0
\\
0 & e^{t\sqrt{-1}}
\end{array}
\right)
\>\right|
\>t\in\mathbb{R}
\right\}\cong \mathrm{U}(1)=
\left\{
e^{t\sqrt{-1}}\>|\>t\in\mathbb{R}
\right\}.
\]
Since $L\cap H=\Vec{1}$, 
$L$ is a semi-direct product 
of $G$ and $H\cong\mathrm{U}(1)$ with 
respect to the 
representation
\[
\rho\left(e^{\sqrt{-1}t}\right)A=
\left(
\begin{array}{cc}
1 & 0
\\
0 & e^{t\sqrt{-1}}
\end{array}
\right)
A
\left(
\begin{array}{cc}
1 & 0
\\
0 & e^{-t\sqrt{-1}}
\end{array}
\right).
\]
The multiplication law is explicitly given by
\[
\left(A_1,e^{t_1\sqrt{-1}}\right)
\left(A_2,e^{t_2\sqrt{-1}}\right)
=
\left(A_1\rho_{1}(e^{t_1\sqrt{-1}})A_2,e^{(t_1+t_2)\sqrt{-1}}
\right).
\]
Since
\begin{align*}
&\quad \left(
\>
\left(
\begin{array}{cc}
\alpha_1 &-\bar{\beta}_1
\\
\beta_1 &\bar{\alpha}_1
\end{array}
\right),
\>
e^{t_1\sqrt{1}}
\right)
\left(
\>
\left(
\begin{array}{cc}
\alpha_2 &-\bar{\beta}_2
\\
\beta_2 &\bar{\alpha}_2
\end{array}
\right),
\>
e^{t_2\sqrt{1}}
\right)
\\
&=\left(
\left(
\begin{array}{cc}
\alpha_1 &-\bar{\beta}_1
\\
\beta_1 &\bar{\alpha}_1
\end{array}
\right)
\left(
\begin{array}{cc}
\alpha_2 &-\bar{\beta}_2e^{-t_1\sqrt{-1}}
\\
\beta_2e^{t_1\sqrt{-1}} &\bar{\alpha}_2
\end{array}
\right),
e^{(t_1+t_2)\sqrt{-1}}
\right),
\end{align*}
the multiplication law is abbreviated as 
\[
\left(\alpha_1,\beta_1,e^{t_1\sqrt{-1}}\right)
\left(\alpha_2,\beta_2,e^{t_2\sqrt{-1}}\right)
=
\left(\alpha_1\alpha_2-\bar{\beta}_1\beta_2e^{t_1\sqrt{-1}},
\alpha_2\beta_1+\bar{\alpha}_1\beta_2e^{t_1\sqrt{-1}},
e^{(t_1+t_2)\sqrt{-1}}\,\right).
\]

\subsubsection{}
Let us consider the semi-direct product 
$L=\mathrm{SU}(2)\ltimes_{\rho}H\cong \mathrm{U}(2)$. 
The Lie algebra $\mathfrak{h}$ of $H$ is 
spanned by
\[
\Vec{h}=\Vec{z}-\frac{1}{2}\Vec{k}=
\left(
\begin{array}{cc}
0& 0
\\
0 & \sqrt{-1}
\end{array}
\right).
\]
Then $\{\Vec{i},\Vec{j},\Vec{k},\Vec{h}\}$ is a 
basis of $\mathfrak{l}=\mathfrak{u}(2)$ and 
$H=\exp\mathfrak{h}$. The vector 
$\Vec{h}$ satisfies the commutation relations
\[
[\Vec{h},\Vec{i}]=-\Vec{j},
\quad 
[\Vec{h},\Vec{j}]=\Vec{i},
\quad 
[\Vec{h},\Vec{k}]=\Vec{0}.
\]
The orthogonal complement 
$\mathfrak{p}:=\mathfrak{h}^{\perp}$ of 
$\mathfrak{h}$ is spanned by the 
orthonormal basis
\[
\left\{
\Vec{i},\Vec{j},
\frac{2\lambda}{\sqrt{1+4\lambda^2}}
\left(
\Vec{k}+\frac{\Vec{z}}{2\lambda^2}
\right)
\right\}.
\]
Since
\[
\Vec{k}+\frac{\Vec{z}}{2\lambda^2}
=\frac{1+4\lambda^2}{4\lambda^2}\Vec{k}
+\frac{1}{2\lambda^2}\Vec{h},
\]
the above orthonormal basis is rewritten as
\[
\left\{
\Vec{i},\Vec{j},
\frac{\sqrt{1+4\lambda^2}}{2\lambda}
\left(
\Vec{k}+\frac{2}{1+4\lambda^2}\Vec{h}
\right)
\right\}.
\]
The resulting homogeneous Riemannian space 
$L/H$ is normal homogeneous.

\subsubsection{}
Now let us perform the homothetical change of the inner 
product as
\[
\langle\cdot,\cdot\rangle_{c}=\frac{4}{c+3}\langle\cdot,\cdot\rangle_{1,\lambda}
\]
on $\mathfrak{u}(2)$ for $c>-3$. 
Then we have the following orthonormal basis
\[
\bar{e}_1=\frac{\sqrt{c+3}}{2}\Vec{i},
\>\>
\bar{e}_2=\frac{\sqrt{c+3}}{2}\Vec{j},
\>\>
\bar{e}_3=
\frac{\sqrt{c+3}}{2}
\frac{\sqrt{1+4\lambda^2}}{2\lambda}
\left(
\Vec{k}+\frac{2}{1+4\lambda^2}\Vec{h}
\right)
\]
of $\mathfrak{n}$. 

Hereafter we assume that $c>1$ and choose
\[
\lambda=\frac{1}{\sqrt{c-1}}>0.
\]
Then we have 
\[
\bar{e}_1=\frac{\sqrt{c+3}}{2}\Vec{i},
\>\>
\bar{e}_2=\frac{\sqrt{c+3}}{2}\Vec{j},
\>\>
\bar{e}_3=
\frac{c+3}{4}
\left(
\Vec{k}+\frac{2(c-1)}{c+3}\Vec{h}
\right).
\]
\begin{remark}
{\rm The case $\lambda=1/2$ corresponds to $c=5$. Thus 
the Berger sphere $\mathscr{M}^3(5)$ 
is represented as a normal homogeneous space 
$\mathrm{U}(2)/\mathrm{U}(1)$ with respect to the 
Riemannian metric induced from the standard 
bi-invariant Riemannian metric on $\mathrm{U}(2)$. 
On $\mathscr{M}^3(5)$ we have
\[
\bar{e}_1=\sqrt{2}\Vec{i},
\quad 
\bar{e}_2=\sqrt{2}\Vec{j},
\quad 
\Vec{e}_3=2(\Vec{k}+\Vec{h}).
\]
}
\end{remark}
The linear isomorphism 
\[
e_1\longmapsto \bar{e}_1,
\quad 
e_2\longmapsto \bar{e}_2,
\quad 
e_3\longmapsto \bar{e}_3
\]
is a linear isometry from 
$\mathfrak{m}$ to $\mathfrak{n}$.
Thus the normal homogeneous space 
$L/H$ is isometric to the 
Berger sphere $\mathscr{M}^{3}(c)$ with $c>1$.
Note that the reductive decomposition 
$\mathfrak{u}(2)=\mathfrak{h}\oplus\mathfrak{n}$ 
coincides with the naturally reductive 
decomposition given in
\cite[Theorem 3.1]{GO} with $t=\varepsilon=4/(c+3)$.

\begin{remark}\rm
The diameter of $\mathscr{M}^{3}(c)$ is computed 
in \cite[Corollary 3.18]{Engel}.
\begin{itemize}
\item If $c\leq 1$, the diameter is $\displaystyle 
\frac{2\pi}{\sqrt{c+3}}$.
\item If $1<c<5$, then 
the diameter is $\displaystyle 
\frac{4\pi}{c+3}$.
\item If $c\geq 5$, 
the diameter is $\displaystyle 
\frac{\pi}{\sqrt{c-1}}$.
\end{itemize}
In the previous remark we pointed out that $c=5$ is, in some sense, 
special. This observation is consistent with the above 
table of the diameter.  Note that 
$\mathscr{M}^{3}(5)$ is isomorphic to the geodesic 
sphere of radius $\pi/4$ in the complex 
projective plane $\mathbb{C}P^2(4)$ of constant holomorphic sectional 
curavture $4$ as a Sasakian manifold. The diameter 
was computed independently by Podobryaev in \cite{Pod}.
\end{remark}
\begin{remark}{\rm As is well known 
the standard almost complex structure 
$J$ on a Riemannian product $M^3\times\mathbb{R}$ of a Sasakian $3$-manifold 
$M^3$ and the real line $\mathbb{R}$ is integrable because the integrability 
of $J$ is 
the normality of almost contact Riemannian structure. 
The resulting Hermitian surface 
$(M^3\times\mathbb{R},J)$ is a Vaisman surface. 
Thus the Riemannian product 
$\mathscr{M}^3(c)\times\mathbb{R}$ 
of a Berger sphere and the real line is a 
Vaisman surface. On the other hand, 
Sasaki classified left invariant 
complex structures on $\mathrm{U}(2)$ \cite{Sasaki}. 
For more information on left invariant LCK structures on $\mathrm{U}(2)$, 
we refer to \cite{CoHa}.  
}
\end{remark}

\section{Homogeneous projections}
\subsection{}
In \cite{AvS}, Arvanitoyeorgos and Souris 
investigated magnetic trajectories in certain classes of homogeneous Riemannian spaces. In particular they obtained the 
following result \cite[Corollary 1.2]{AvS}:

\begin{theorem}\label{thm:AvS}
Let $L/H$ be a homogeneous Riemannian space with $L$ compact, 
$H$ having non-discrete center and reductive decomposition 
$\mathfrak{l}=\mathfrak{h}\oplus\mathfrak{p}$. Consider a homogeneous 
fibration $\pi:L/H\to L/N$ with fiber $N/H$, where $N$ is a closed
Lie subgroup of $L$, so that 
$T_{o}L/H=T_{\pi(o)}G/N\oplus T_{o}N/H$. Endow 
$L/H$ with the $L$-invariant metric $g$ induced from 
the inner product
\begin{equation}\label{eq:AS-metric}
\langle\cdot,\cdot\rangle=
\langle\cdot,\cdot\rangle_{\mathsf B}\vert_{T_{\pi(o)}L/N}+\beta
\langle\cdot,\cdot\rangle_{\mathsf B}\vert_{T_{o}N/H},
\quad 
\langle\cdot,\cdot\rangle_{\mathsf B}=(-\alpha)\mathsf{B}
\end{equation}
where $\mathsf{B}$ is the Killing form of $\mathfrak{l}$ and 
$\alpha$ and $\beta$ are positive constants. Let $\gamma(s)$ be the magnetic trajectory through the origin with charge $q$, under the magnetic field $F^{\zeta}$ defined by \eqref{eq:homF}. Then $\gamma(s)$ is given by 
the equation
\begin{equation}
\gamma(t)=
\exp\left\{
t\left(
\hat{X}_a+\beta \hat{X}_b+q\zeta
\right)
\right\}\,
\exp\left\{
t(1-\beta)\left(
\hat{X}_b+\frac{q}{\beta}\zeta
\right)
\right\}\cdot o.
\end{equation}
Here $\hat{X}_a$ and $\hat{X}_b$ are projections of $\dot{\gamma}(0)$ 
on $\mathfrak{p}_a=T_{\pi(o)}L/N$ and 
$\mathfrak{p}_b=T_{o}N/H$, respectively.
\end{theorem}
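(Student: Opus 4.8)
The plan is to use the homogeneity of the problem and then verify the stated formula directly. Since $L$ acts by isometries preserving the invariant magnetic field $F^{\zeta}$ of \eqref{eq:homF}, it carries magnetic trajectories to magnetic trajectories; hence it suffices to produce the trajectory issuing from the origin $o$ with a prescribed initial velocity $\hat{X}_a+\hat{X}_b\in\mathfrak{p}_a\oplus\mathfrak{p}_b$. Writing $\Xi_1=\hat{X}_a+\beta\hat{X}_b+q\zeta$ and $\Xi_2=(1-\beta)\bigl(\hat{X}_b+\tfrac{q}{\beta}\zeta\bigr)$, I would check that
\[
\gamma(t)=\exp(t\Xi_1)\,\exp(t\Xi_2)\cdot o
\]
solves the Lorentz equation $\nabla_{\dot{\gamma}}\dot{\gamma}=q\varphi^{\zeta}\dot{\gamma}$ with this initial condition; uniqueness for this second order ODE then yields the theorem.

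First I would verify the initial data. A direct computation gives $\Xi_1+\Xi_2=\hat{X}_a+\hat{X}_b+\tfrac{q}{\beta}\zeta$, and since $\zeta\in\mathfrak{z}(\mathfrak{h})\subset\mathfrak{h}$ projects to zero under $\mathfrak{l}\to\mathfrak{p}=T_{o}(L/H)$, the initial velocity $\dot{\gamma}(0)$ is exactly $\hat{X}_a+\hat{X}_b$, as required. I would also record that $\Xi_2\in\mathfrak{n}=\mathfrak{h}\oplus\mathfrak{p}_b$, so the second factor $\exp(t\Xi_2)$ runs inside the fiber group $N$; this is the abstract counterpart of the \emph{charged Reeb flow} appearing in the Berger sphere formula \eqref{eq:hom-cont-mag}.

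Next I would compute the velocity along $\gamma$. Left-translating $\dot{\gamma}(t)$ to the origin and using $\exp(t\Xi_2)\in N$, the angular velocity is governed by $\mathrm{Ad}(\exp(-t\Xi_2))\Xi_1+\Xi_2$, whose $\mathfrak{p}$-component evolves under the orthogonal $\mathrm{Ad}$-action of the one-parameter group $\exp(t\Xi_2)$. Inserting this into the Lorentz equation, the acceleration $\nabla_{\dot{\gamma}}\dot{\gamma}$ is expanded through the Levi-Civita connection of the reductive homogeneous space $(L/H,g)$, which at the origin reads $\nabla_{X}Y=\tfrac12[X,Y]_{\mathfrak{p}}+\mathsf{U}_{\mathfrak{p}}(X,Y)$ with $\mathsf{U}_{\mathfrak{p}}$ as in \eqref{eq:NatRed} (the homogeneous analogue of the group formula of Section~\ref{sec:1.2} built from \eqref{eq:U}), while the Lorentz force is read off from $g(\varphi^{\zeta}X,Y)=F^{\zeta}(X,Y)=-\langle\zeta,[X,Y]\rangle_{\mathsf B}$. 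This reduces the Lorentz equation to a purely algebraic identity in $\mathfrak{l}$ that must hold along the orbit.

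The hard part will be the bookkeeping forced by the $\beta$-deformation. For the undeformed normal metric ($\beta=1$) the space is naturally reductive, $\mathsf{U}_{\mathfrak{p}}$ vanishes, and the trajectory degenerates to the Bolsinov--Jovanovi\'c homogeneous form; rescaling $\langle\cdot,\cdot\rangle_{\mathsf B}$ by $\beta$ on the vertical space $\mathfrak{p}_b$ destroys natural reductivity, so $\mathsf{U}_{\mathfrak{p}}$ contributes O'Neill-type cross terms mixing the brackets $[\mathfrak{p}_a,\mathfrak{p}_a]_{\mathfrak{p}_b}$ and $[\mathfrak{p}_a,\mathfrak{p}_b]_{\mathfrak{p}_a}$. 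The crux is to show that the coefficients $\beta$ and $q/\beta$ built into $\Xi_1$ and $\Xi_2$ are precisely those for which these deformation terms reproduce $q\varphi^{\zeta}\dot{\gamma}$; here one uses crucially that $\zeta\in\mathfrak{z}(\mathfrak{h})$, so that the $\zeta$-flow is an isometry and $F^{\zeta}$ coincides, up to the factor $\zeta$, with the curvature form of the bundle $L/H\to L/N$. Once the cancellation is checked on the basis of $\mathfrak{p}_a\oplus\mathfrak{p}_b$, the $\mathrm{Ad}(\exp(t\Xi_2))$-equivariance established above propagates it to all $t$, completing the verification. As a consistency test, specializing to $L=G\times K$ and $N=\Delta K$ should return the Berger sphere trajectory \eqref{eq:hom-cont-mag}.
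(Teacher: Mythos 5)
You should first be aware that the paper does not prove Theorem \ref{thm:AvS} at all: it is quoted verbatim from Arvanitoyeorgos--Souris \cite[Corollary 1.2]{AvS}, and Appendix B only \emph{applies} it to the Boothby--Wang fibration $(G\times K)/\Delta K\to (G\times K)/(K\times K)$ to recover \eqref{eq:hom-cont-mag}. So there is no internal proof to match your argument against; the relevant comparisons are with \cite{AvS} itself (whose method, based on curves that are products of two one-parameter subgroups as in \cite{AvS0,Souris}, your plan resembles in spirit) and with the paper's independent derivation in Section \ref{sec4}, which proceeds by a genuinely different route, namely reducing the Lorentz equation to the magnetized Euler--Arnold equation \eqref{eq:Mag-EA} for the left-translated velocity and solving the resulting explicit ODE system.

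As a proof, however, your proposal has a genuine gap. The preliminary checks are fine: $\Xi_1+\Xi_2=\hat{X}_a+\hat{X}_b+\tfrac{q}{\beta}\zeta$ with $\zeta\in\mathfrak{z}(\mathfrak{h})\subset\mathfrak{h}$, so $\dot{\gamma}(0)=\hat{X}_a+\hat{X}_b$, and indeed $\exp(t\Xi_2)\in N$. But the entire content of the theorem is the step you yourself label ``the crux'' and then do not carry out: the verification that, with precisely the coefficients $\beta$ and $q/\beta$, the $\mathsf{U}_{\mathfrak{p}}$-terms created by the $\beta$-deformation of the normal metric cancel against $q\varphi^{\zeta}\dot{\gamma}$. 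Announcing this cancellation is not proving it. Worse, the claim that checking it at $t=0$ ``propagates to all $t$ by $\mathrm{Ad}(\exp(t\Xi_2))$-equivariance'' is unjustified: $\gamma$ is not the orbit of a one-parameter group of isometries, and applying the isometry $\exp(-t_0\Xi_1)$ to $\gamma(t_0+s)$ yields $\exp(s\Xi_1)\exp(s\Xi_2)\cdot\left(\exp(t_0\Xi_2)\cdot o\right)$, a curve of the same two-step type but issuing from a \emph{different} point with conjugated data, so pointwise verification at the origin does not suffice; one needs either the two-step machinery of \cite{AvS0,Souris} or a reduction to a first-order equation for $\gamma^{-1}\dot{\gamma}$ of Euler--Arnold type, as the paper does in the Berger case. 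Finally, a sign pitfall that would derail the crux computation if done naively: the formula $\nabla_{X}Y=\tfrac{1}{2}[X,Y]_{\mathfrak{p}}+\mathsf{U}_{\mathfrak{p}}(X,Y)$ you invoke is the one for left-invariant fields on a Lie group (Section \ref{sec:1.2}); for the fundamental vector fields of the $L$-action on $L/H$ the bracket term enters with the opposite sign, and with the wrong sign the claimed cancellation fails.
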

Let us apply this result to the Boothby-Wang fibration 
(Hopf fibration)
$\mathscr{M}^3(1)\to\mathbb{S}^2(4)$ (see \cite[Example 5.2, \S 6.1]{AvS0}).

\subsection{}
We regard the unit $3$-sphere $\mathscr{M}^3(1)=(\mathbb{S}^3,g_1)$ 
as the naturally reductive homogeneous space 
$\mathscr{M}^3(1)=(\mathrm{SU}(2)\times\mathrm{U}(1))/\Delta \mathrm{U}(1)$. On $\mathscr{M}^3(1)$, we equip the Riemannian metric 
\[
\tilde{g}=\frac{4}{c+3}g_1,\quad c>-3.
\]
The resulting Riemannian $3$-manifold 
is of constant curvature $(c+3)/4$ and hence isometric to the 
$3$-sphere $\mathbb{S}^3(2/\sqrt{c+3})$ of radius $2/\sqrt{c+3}$. Choose 
\[
L=G\times K=\mathrm{SU}(2)\times\mathrm{U}(1),
\quad 
H=\Delta K=\Delta \mathrm{U}(1),
\quad 
N=K\times K=\mathrm{U}(1)\times\mathrm{U}(1),
\]
then, the  bottom space $L/N$ is 
\[
L/N=(G\times K)/(K\times K)\cong G/K.
\]
The reductive decomposition $\mathfrak{g}\oplus\mathfrak{k}=\Delta\mathfrak{k}
\oplus\widetilde{\mathfrak{p}}$ is given by 
\[
\widetilde{\mathfrak{p}}
=\left\{\left.
\left(
V+W,0
\right)
\>
\right|
\>
V\in\mathfrak{m},\,W\in\mathfrak{k}
\right\},
\]
where 
\[
\mathfrak{m}=\mathbb{R}\Vec{i}
\oplus\mathbb{R}\Vec{j}
\]
as before. Next, the fiber $N/H$ is
\[
N/H=(\mathrm{U}(1)\times\mathrm{U}(1))/\Delta\mathrm{U}(1)\cong \mathrm{U}(1).
\]
The tangent space $\tilde{\mathfrak{p}}$
has the splitting $\tilde{\mathfrak{p}}=\tilde{\mathfrak{p}}_a\oplus\tilde{\mathfrak{p}}_b$ with
\begin{align*}
& \tilde{\mathfrak{p}}_a=T_{\pi(o)}L/N
=\left\{\left.
\left(
V,0
\right)
\>
\right|
\>
V\in\mathfrak{m}
\right\}\cong\mathfrak{m},
\\
& \tilde{\mathfrak{p}}_b=T_{o}N/H
=\left\{\left.
\left(
W,0
\right)
\>
\right|
\>
W\in\mathfrak{k}
\right\}\cong\mathfrak{k}.
\end{align*}
Let us choose $\beta=4/(c+3)$ and 
consider the homogeneous Riemannian metric induced 
from the inner product \eqref{eq:AS-metric}. 
The resulting metric coincides with the Riemannian metric $g$ of the 
Berger sphere $\mathscr{M}^3(c)$. 

The center $\mathfrak{z}(\mathfrak{h})$ is given by
\[
\mathfrak{z}(\mathfrak{h})=\mathfrak{z}(\Delta\mathfrak{u}(1))=\Delta\mathfrak{u}(1)\cong
\mathfrak{u}(1).
\]
Then the contact magnetic field 
$F=\mathrm{d}\eta$ is identified with 
the magnetic field $F^{\zeta}$ defined by
\[
\zeta=
\left(\frac{1}{2}\xi,
\frac{1}{2}\xi
\right),
\quad 
\xi=\frac{c+3}{4}\xi_1.
\]

Then, for 
$\hat{X}=\dot{\gamma}(0)$, we have the decomposition
$\hat{X}=\hat{X}_a+\hat{X}_b$
\[
\hat{X}=(X,0),\quad 
\hat{X}_a=(X_{\mathfrak m},0),
\quad 
\hat{X}_b=(X_{\mathfrak k},0),
\]
where $X=X_{\mathfrak{k}}+X_{\mathfrak m}
\in\mathfrak{k}\oplus\mathfrak{m}$.
Hence we get
\begin{align*}
\hat{X}_a+\beta \hat{X}_b+q\zeta
=&\left(
X_{\mathfrak m}+\frac{4}{c+3}X_{\mathfrak k}+\frac{q}{2}\xi,
\frac{q}{2}\xi
\right),
\\
(1-\beta)\left(
\hat{X}_{b}+\frac{q}{\beta}\zeta
\right)=&\frac{c-1}{c+3}
\left(X_{\mathfrak k}+\frac{(c+3)q}{8}\xi,\frac{(c+3)q}{8}\xi
\right).
\end{align*}
From these formulas, we get
\[
\exp_{G\times K}
\left\{
t(\hat{X}_a+\beta \hat{X}_b+q\zeta)
\right\}
=
\left(
\exp_{G}\left\{t\left(
X_{\mathfrak m}+\frac{4}{c+3}X_{\mathfrak k}+\frac{q}{2}\xi
\right)
\right\},
\exp_{K}\left\{t
\left(
\frac{q}{2}\xi
\right)
\right\}
\right),
\]
\begin{align*}
&\quad \exp_{G\times K}
\left\{
t(1-\beta)\left(
\hat{X}_{b}+\frac{q}{\beta}\zeta
\right)
\right\}
\\
&=
\left(
\exp_{G}
\left\{
\frac{(c-1)t}{c+3}
\left(X_{\mathfrak k}+\frac{(c+3)q}{8}\xi\right)
\right\},
\exp_{K}
\left(
\frac{(c-1)qt}{8}
\xi
\right)
\right).
\end{align*}
Hence
\begin{align*}
& 
\exp_{G\times K}\left\{
t\left(
\hat{X}_a+\beta \hat{X}_b+q\zeta
\right)
\right\}\,
\exp_{G\times K}\left\{
s(1-\beta)\left(
\hat{X}_b+\frac{q}{\beta}\zeta
\right)
\right\}
\\
&=\left(
\begin{array}{c}
\exp_{G}
\left\{
\frac{(c-1)t}{c+3}
\left(X_{\mathfrak k}+\frac{(c+3)q}{8}\xi\right)
\right\}\exp_{K}
\left\{
\frac{(c-1)t}{c+3}
\left(X_{\mathfrak k}+\frac{(c+3)q}{8}\xi\right)
\right\}
\\
\exp_{K}
\left(
\frac{(c+3)qt}{8}
\xi
\right)
\end{array}
\right).
\end{align*}
Henceforth we obtain the formula for the 
contact magnetic trajectory
\begin{equation}
\gamma(t)=
\exp_{G}
\left\{t\left(
X_{\mathfrak m}+\frac{4}{c+3}X_{\mathfrak k}+\frac{q}{2}\xi
\right)
\right\}
\exp_{K}
\left(
\frac{(c-1)t}{c+3}
X_{\mathfrak k}
\right).
\end{equation}
In particular, when $c=1$, we get
\[
\gamma(t)=
\exp_{G}
\left\{t\left(
X+\frac{q}{2}\xi
\right)
\right\}.
\]
This formula coincides with the one 
obtained by Bolsinov and Jovanovi{\'c} \cite[Remark 1]{BJ}. 

We can deduce the explicit parametrization 
for contact magnetic trajectories in the Berger $3$-sphere 
$\mathscr{M}^{3}(c)$ by virtue of Theorem 
\ref{thm:AvS} due to Arvanitoyeorgos and Souris. 
However our method (the use of magnetized Euler-Arnold equation) has some 
advantages. First, our method is elementary and direct. 
Next, our method can be applied to the case $L$ is 
\emph{non-compact}. Indeed, in our previous works \cite{IM24}, 
we proved the homogeneity of contact magnetic trajectories 
of $\mathrm{SL}_2\mathbb{R}=(\mathrm{SL}_2\mathbb{R}\times\mathrm{SO}(2))/(\mathrm{SO}(2)\times\mathrm{SO}(2))$. 
Moreover, our formula \eqref{eq:hom-cont-mag} implies that 
every contact magnetic trajectory is obtained by right translations 
of a homogeneous geodesic under the charged Reeb flow.

\section{Hamiltonian formulation of magnetic trajectories}\label{sec:C}

\subsection{The canonical two-form}
Let $M$ be an $m$-manifold. Denote by $T^{*}M$ its cotangent bundle.
Take a local coordinate system $(x^1,x^2,\dots,x^m)$ of $M$, then
it induces a fiber coordinates $(p_1,p_2,\dots,p_m)$. 
Moreover, $\vartheta=\sum_{j=1}^{m}p_{j}\,dx^j$ is a globally defined one-form
on $T^{*}M$ and called the
\emph{canonical one-form} (also called the \emph{Liouville form}).
The two-form $\varPsi=-d\vartheta=dx^{j}\wedge dp_{j}$ is a
symplectic form on $T^{*}M$.
The symplectic form $\varPsi$ is called the \emph{canonical two-form} on $T^{*}M$.
For a prescribed smooth function $H$, the 
\emph{Hamiltonian vector field} 
$X_H$ with \emph{Hamiltonian} $H$ is 
defined by
\[
\mathrm{d}H(Y)=\Psi(X_{H},Y),\quad Y\in \varGamma(T(T^{*}M)).
\]
The Hamiltonian vector field $X_{H}$
is locally expressed as
\[
X_{H}=\sum_{i=1}^{m}\frac{\partial H}{\partial p_i}\frac{\partial}{\partial x^i}
-\sum_{i=1}^{m}\frac{\partial H}{\partial x^i}\frac{\partial}{\partial p_i}.
\]
The dynamical system $(T^{*}M,\Psi,H)$ is called a 
\emph{Hamiltonian 
system} with \emph{configuration space} $M$ and 
Hamiltonian $H$. The cotangent bundle $T^{*}M$ is 
referred as to the \emph{phase space}.

Take a curve 
$\bar{\gamma}(t)=(x^{1}(t),x^{2}(t),\dots,
x^{m}(t);p_{1}(t),p_{2}(t),\dots,p_{m}(t))$ 
in the phase space $T^{*}M$.
Then $\bar{\gamma}(t)$ is an integral curve of $X_H$ if and only if
it satisfies the \emph{Hamilton equation}:
\[
\frac{\mathrm{d}x^i}{\mathrm{d}t}=\frac{\partial H}{\partial p_i},
\quad
\frac{\mathrm{d}p_i}{\mathrm{d}t}=-\frac{\partial H}{\partial x^i},
\quad 1\leq i\leq m.
\]
\subsection{Geodesic flows}
Let us consider a Riemannian $m$-manifold $(M,g)$, then its
cotangent bundle $T^{*}M$ is identified with the
tangent bundle $TM$ via the metric $g$. The so-called musical isomorphism
\[
\flat:TM\to T^{*}M;\quad \flat v=g_{p}(v,\cdot),\quad v\in T_{p}M
\]
is a vector bundle isomorphism. We denote by $\pi_{TM}$ the
natural projection of $TM$ onto $M$.
Take a local coordinate system $(x^1,x^2,\dots,x^m)$
with fiber coordinates $(u^1,u^2,\dots,u^m)$. Then
the pull-backed one-form $\flat^{*}\vartheta$ is
expressed as $\flat^{*}\vartheta=\sum\limits_{i,j=1}^{m}g_{ij}u^{j}dx^{i}$.
Then the pull-backed two-form $\varPhi:=\flat^{*}\varPsi$ is
computed as
\[
\flat^{*}\varPsi=\sum_{i,j=1}^{m}
g_{ij}\mathrm{d}x^{i}\wedge \mathrm{d}u^{j}
+\sum_{i,j,k=1}^{m}\frac{\partial g_{ji}}{\partial x^k}\,u^{j}\,
\mathrm{d}x^{i}\wedge \mathrm{d}x^{k}.
\]
The pull-backed two-form $\varPhi$ is a symplectic form on $TM$.
Let us consider the \emph{kinetic energy} on $TM$:
\[
E(p;v)=\frac{1}{2}g_{p}(v,v),\quad v\in T_{p}M.
\]
Then we get a Hamiltonian system $(TM,\varPhi, E)$.
The Hamiltonian vector field on $TM$ with
Hamiltonian $E$ is called the
\emph{geodesic spray} and has local expression:
\[
X_{E}=\sum_{i=1}^{m}u^{i}\frac{\partial}{\partial x^i}
-\sum_{i,j,k=1}^{m}\varGamma_{ij}^{\>k}u^{i}u^{j}\frac{\partial}{\partial u^k}.
\]
The integral curves of $\xi_E$ are solutions of the system
\[
\frac{\mathrm{d}x^k}{\mathrm{d}t}=u^{k},\quad
\frac{\mathrm{d}u^k}{\mathrm{d}t}=-\sum_{i,j=1}^{m}\varGamma_{ij}^{\>k}u^{i}u^{j}.
\]
One can see that for every
trajectory $\bar{\gamma}(t)$ of the
Hamiltonian system $(TM,\varPhi,E)$,
the projected curve
$\gamma(t)=\pi(\gamma(t))$ in $M$ is a geodesic.

\subsection{Magnetic trajectories}
Let us consider a magnetic field $F$ on a Riemanian manifold $(M,g)$.
Express $F$ and its Lorentz force $\varphi$ as
\[
F=\sum_{i<j}F_{ij}
\mathrm{d}x^{i}\wedge 
\mathrm{d}x^{j}=
\frac{1}{2}\sum_{i,j=1}^{m}F_{ij} 
\mathrm{d}x^{i}\wedge \mathrm{d}x^{j},
\quad 
\varphi \frac{\partial}{\partial x^i}=\sum_{k=1}^{m}\varphi_{i}^{\>k}\frac{\partial}{\partial x^k},
\quad 
\varphi_{i}^{\>l}=-\sum_{j=1}^{m}g^{lj}F_{ji}.
\]
Let us magnetize the symplectic form $\varPhi$ of $TM$  as
\[
\varPhi_F=\varPhi+q\pi_{TM}^{*}F
\]
for some constant $q$. 
The deformed two-form $\varPhi_F$ is still
a symplectic form (and hence $\varPhi_F$ itself is a magnetic field). Let us consider the Hamiltonian system
$(TM,\varPhi_F,E)$. The Hamiltonian vector field
is given by
\[
X_E^F=X_{E}+q\mathrm{v}\{\varphi(u)\},
\]
where $\mathrm{v}\{\varphi(u)\}$ is a vertical vector field
on $TM$ globally defined by
\[
\mathrm{v}\{\varphi(u)\}=\sum_{i,k=1}^{m}\varphi_{i}^{\>k}u^{i}\frac{\partial}{\partial u^k}.
\]

The Hamilton equation is
\[
\frac{\mathrm{d}x^k}{\mathrm{d}t}=u^{k},\quad
\frac{\mathrm{d}u^k}{\mathrm{d}t}=-\sum_{i,j=1}^{m}\varGamma_{ij}^{\>k}u^iu^j+q\sum_{i=1}^{m}\varphi_{i}^{\>k}u^i.
\]
This is a second order ODE
\[
\frac{\mathrm{d}^2x^k}{\mathrm{d}t^2}
+\sum_{i,j=1}^{m}\varGamma^{\>k}_{ij}
\frac{\mathrm{d}x^i}{dt}
\frac{\mathrm{d}x^j}{\mathrm{d}t}
=q\,\sum_{i=1}^{m}\varphi_{i}^{k}\frac{\mathrm{d}x^i}{\mathrm{d}t}.
\]
This system has coordinate-free expression 
\[
\nabla_{\dot{\gamma}}\dot{\gamma}=q\varphi\dot{\gamma}.
\]
This in nothing but the Lorentz equation \eqref{eq:Lorentz}.
\begin{proposition}
Let $(M,g)$ be a Riemannian manifold with a magnetic field
$F$. Then the magnetic trajectory equation is
the Hamilton equation of the Hamiltonian system
$(TM,\varPhi_F,E)$.
\end{proposition}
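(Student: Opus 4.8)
The plan is to identify the Hamiltonian vector field $X_E^F$ of the kinetic energy $E$ with respect to the magnetized symplectic form $\varPhi_F=\varPhi+q\,\pi_{TM}^{*}F$, and to show that it equals $X_E^F=X_E+q\,\mathrm{v}\{\varphi(u)\}$, i.e.\ exactly the field whose integral curves have already been written down above. Once this identification is secured, the Hamilton equation for $(TM,\varPhi_F,E)$ reads off directly as the displayed first-order system, which is equivalent to the second-order ODE, whose coordinate-free form is $\nabla_{\dot\gamma}\dot\gamma=q\varphi\dot\gamma$, namely the Lorentz equation \eqref{eq:Lorentz}. Thus the entire proof reduces to verifying a single vector-field identity on $TM$.

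First I would invoke the defining relation $\mathrm{d}E(Y)=\varPhi_F(X_E^F,Y)$ for all $Y\in\varGamma(T(T^{*}M))$, together with the analogous relation $\mathrm{d}E(Y)=\varPhi(X_E,Y)$ for the unperturbed geodesic spray. Inserting the candidate $X_E^F=X_E+q\,\mathrm{v}\{\varphi(u)\}$ and expanding $\varPhi_F$, the claim collapses to showing that the one-form
\[
q\,\varPhi\!\left(\mathrm{v}\{\varphi(u)\},\,\cdot\,\right)+q\,(\pi_{TM}^{*}F)(X_E,\,\cdot\,)
\]
vanishes. Here two structural observations do the organizing work: the pullback $\pi_{TM}^{*}F=\tfrac12\sum F_{ij}\,\mathrm{d}x^{i}\wedge\mathrm{d}x^{j}$ involves only base differentials, so it annihilates every vertical vector. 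In particular the term quadratic in $q$ (obtained by contracting $\pi_{TM}^{*}F$ with the vertical field $\mathrm{v}\{\varphi(u)\}$) drops out, which is precisely why the correction to $X_E$ is linear in $q$ with no higher-order terms.

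The computational heart is then a direct contraction in the fiber coordinates $(x^i,u^j)$. Feeding $\mathrm{v}\{\varphi(u)\}=\sum_k\varphi_i^{\,k}u^i\,\partial/\partial u^k$ into $\varPhi$, only the term $\sum g_{ij}\,\mathrm{d}x^i\wedge\mathrm{d}u^j$ survives and yields $-\sum_{i,j,l}g_{ij}\varphi_l^{\,j}u^l\,\mathrm{d}x^i$; feeding $X_E$ into $\pi_{TM}^{*}F$, only its horizontal part $\sum u^i\,\partial/\partial x^i$ contributes and, after using the antisymmetry of $F$, yields $\sum_{i,l}F_{li}u^l\,\mathrm{d}x^i$. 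These two one-forms cancel precisely on account of the defining relation of the Lorentz force $\varphi_i^{\,l}=-\sum_j g^{lj}F_{ji}$, which gives $\sum_j g_{ij}\varphi_l^{\,j}=-F_{il}=F_{li}$. I expect this bookkeeping---keeping the signs and the index contraction $\sum_j g_{ij}\varphi_l^{\,j}=-F_{il}$ correct, and correctly discarding the vertical contractions of $\pi_{TM}^{*}F$---to be the only genuine obstacle; everything else is formal.

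With the identity $X_E^F=X_E+q\,\mathrm{v}\{\varphi(u)\}$ established, I would conclude by reading off the integral-curve equations $\dot{x}^k=u^k$ and $\dot{u}^k=-\sum\varGamma_{ij}^{\,k}u^iu^j+q\sum\varphi_i^{\,k}u^i$, eliminating $u$ to obtain the second-order system displayed above, and recognizing its invariant form $\nabla_{\dot\gamma}\dot\gamma=q\varphi\dot\gamma$. Since this is exactly the Lorentz equation, the magnetic trajectory equation is indeed the Hamilton equation of the Hamiltonian system $(TM,\varPhi_F,E)$, as asserted.
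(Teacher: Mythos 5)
Your proposal is correct and follows essentially the same route as the paper: identify $X_E^F=X_E+q\,\mathrm{v}\{\varphi(u)\}$ as the Hamiltonian vector field of $E$ with respect to $\varPhi_F$, read off the Hamilton equations $\dot{x}^k=u^k$, $\dot{u}^k=-\varGamma_{ij}^{\,k}u^iu^j+q\varphi_i^{\,k}u^i$, and recognize the invariant form $\nabla_{\dot{\gamma}}\dot{\gamma}=q\varphi\dot{\gamma}$. Your contraction argument (verticality of $\mathrm{v}\{\varphi(u)\}$ killing the $q^2$ term and the cancellation via $\sum_j g_{ij}\varphi_l^{\,j}=F_{li}$) is sound and in fact supplies explicitly the verification of the vector-field identity that the paper merely asserts.
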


\subsection{Lagrangian formalism of magnetic trajectories}
Let $(M,g)$ be a Riemannian manifold 
equipped with an exact magnetic field 
$F=\mathrm{d}A$, where $A$ is the magnetic potential $1$-form. Then $\gamma(t)$ is a magnetic trajectory 
of charge $q$ under the influence of $F$ if and only 
if it is a critical point of the 
\emph{Landau-Hall 
functional}
\[
\mathrm{LH}(\gamma)=\int \frac{1}{2}
g(\dot{\gamma}(t),\dot{\gamma}(t))\,\mathrm{d}t+q 
\int A(\dot{\gamma}(t))\,\mathrm{d}t.
\]
Thus the magnetic trajectory equation is 
understood as a Lagrangian system 
$(TM,\mathrm{LH})$ with Lagrangian $\mathrm{LH}$.

\subsection{Non-holonomic Lagrangian systems 
with gyroscopic forces}

Let $(M,g)$ be a Riemannian $m$-manifold with an exact magnetic field $F=\mathrm{d}A$. Consider a 
\emph{non-holonomic} Lagrangian system  
$(TM,L_1,D)$, where the constraints define a
non-integrable distribution $D\subset TM$. 
Assume that the constraints are homogeneous and do not 
depend on time. 
In \cite{DGJ}, the authors consider 
Lagrangians $L_1$, along 
with the difference of the kinetic energy 
and potential energy, contains an additional 
term, which is linear in velocities:
\[
L_{1}(x^1,x^2,\dots,x^m;u^1,u^2,\dots,u^m)=
\frac{1}{2}\sum_{i,j=1}^{m}g_{ij}u^iu^j
+\sum_{k=1}^{m}A_{k}u^{k}-V(x^1,x^2,\dots,x^m).
\]
Here the smooth function  
\[
(x^1,x^2,\dots,x^m;u^{1},u^{2},\dots,u^{m})
\longmapsto 
\sum_{k=1}^{m}A_{k}(x^1,x^2,\dots,x^{m};u^{1},u^{2},\dots,u^{m})u^k
\]
is interpreted as a one-form 
$A=\sum_{i=1}^{m}A_{i}\mathrm{d}x^i$ on $M$. 
The one-form $A$ is understood as a magnetic potential. 
Obviously, the Lagrangian $L_{1}$ has the 
form $L_{1}=\mathrm{LH}_{q=1}-V$, where 
$\mathrm{LH}_{q=1}$ is the Landau-Hall functional with charge $q=1$.

Take a path $\gamma(t)=(x^1(t),x^2(t),\dots,x^m(t))$ 
in the configuration space $(M,g)$. 
Then $\gamma(t)$ is said to be \emph{admissible} 
if its velocity $\dot{\gamma}(t)$ belongs to 
the distribution $D$.
The first variation formula for the action integral
\[
\mathcal{L}_1(\gamma)=\int_{0}^{1}
L_{1}(\gamma(t),\dot{\gamma}(t))\,\mathrm{d}t
\]
with respect to admissible 
variation $\{\gamma_{\varepsilon}(t)
=(x^{1}_{\varepsilon}(t),x^{2}_{\varepsilon}(t),\dots, x^{m}_{\varepsilon}(t))\}$
is given by
\[
\frac{\mathrm{d}}{\mathrm{d}\varepsilon}
\biggr\vert_{\varepsilon=0}
\mathcal{L}_1(\gamma_{\varepsilon})
=\int_{0}^{1}
\sum_{i=1}^{m}
\left\{
\frac{\partial L_1}{\partial x^i}
–\frac{\mathrm{d}}{\mathrm{d}t}
\left(
\frac{\partial L_1}{\partial \dot{x}^i}
\right)
\right\}v^{i}
\,\mathrm{d}t,
\quad v^{i}=
\frac{\mathrm{d}}{\mathrm{d}\varepsilon}
\biggr\vert_{\varepsilon=0}x^{i}_{\varepsilon}(t).
\]
Here the variation $\gamma(t)\longmapsto \gamma_{\varepsilon}(t)$ through an admissible curve $\gamma_{0}(t)=\gamma(t)$ is 
said to be \emph{admissible} if it is an admissible curve for any 
$\varepsilon$. Thus its variational vector field satisfies
\[
\sum_{i=1}^{m}v^{i}(t)\frac{\partial}{\partial x^i}
\biggr\vert_{\gamma(t)}\in D_{\gamma(t)}
\]
for any $t$.
Set $L=E-V$, where $E$ is the kinetic energy. 
According to the notation of \cite{DGJ}, the dynamical system $(TM,L_1,D)$ together with $A$ is 
denoted by $(M,L,F,D)$ and called 
a \emph{natural mechanical non-holonomic 
system with a magnetic force}.

\begin{example}[Chaplygin systems]
{\rm 
Let $M$ be a principal circle bundle over a Riemannian manifold $(B,\bar{g})$. Let us assume that $M$ has 
a Riemannian metric $g$ so that the projection 
$\pi:M\to B$ is a Riemannian submersion and 
the horizontal distribution $\mathcal{H}$ coincides with 
Ehresmann connection of the circle bundle. In additin, we 
equip a horizontal magnetic field $F$ on $M$. 
Then the dynamical system $(M,L,F,\mathcal{H})$ is called a 
\emph{gyroscopic Chaplygin system} (see \cite[Definition 1.2]{DGJ}). 

For instance, the Berger sphere $\mathscr{M}^3(c)$ is a
principal circle bundle over $\mathbb{S}^2(1/\sqrt{c+3})$. 
The Boothby-Wang fibering $\mathscr{M}^3(c)\to\mathbb{S}^2(1/\sqrt{c+3})$ is a Riemannian submersion. The horizontal 
distribution is nothing but the 
contact structure and coincides with Ehresmann connection. 
The contact magnetic field $F=\mathrm{d}\eta$ is horizontal. Thus 
$(\mathscr{M}^3(c),L,\mathcal{D},\mathrm{d}\eta)$ is an example 
of gyroscopic Chaplygin system.
}    
\end{example}

\bigskip

\noindent
{\bf Acknowledgments.} 
The first named author was partially supported by JSPS KAKENHI JP19K03461, JP23K03081.
The second named author was partially supported by FCSU grant of the Alexandru Ioan Cuza
 University, as well as by the Romanian Ministry of Research, 
Innovation and Digitization, within Program 1 – Development of the national 
RD system, Subprogram 1.2 – Institutional Performance – RDI excellence funding projects, 
Contract no.11PFE/30.12.2021.  

The authors would like to express their sincere gratitude to the referee for her/his careful reading of the manuscript and suggestions concerning on connections between our results and rigid body dynamics.




\end{document}